\documentclass[10pt, a4paper, twoside, leqno]{article}
    \makeatletter
    \renewcommand*{\@fnsymbol}[1]{\ensuremath{\ifcase#1\or 1\or 2\or \ddagger\or
        \mathsection\or \mathparagraph\or \|\or **\or \dagger\dagger
        \or \ddagger\ddagger \else\@ctrerr\fi}}
    \makeatother
    
\usepackage{paper}

\title{\Large\textsc{\bfseries{Markov-Modulated Affine Processes}}} 
\date{\vspace{0em}\normalsize \today}
\author{\textsc{Kevin Kurt}\thanks{Corresponding author, Vienna University of Economics and Business (WU Wien), Institute for Statistics and Mathematics, {\tt kevin.kurt@wu.ac.at}} \and
\textsc{R\"udiger Frey}\thanks{Vienna University of Economics and Business (WU Wien), Institute for Statistics and Mathematics, {\tt ruediger.frey@wu.ac.at}}}

\begin{document}

\maketitle

\begin{abstract}
\small{
We study Markov-modulated affine processes (abbreviated MMAPs), a class of Markov processes that are created from affine processes by allowing some of their coefficients to be a function of an exogenous Markov process $X$.
MMAPs largely preserve the tractability of standard affine processes, as their characteristic function has a computationally convenient functional form. 
Our setup is a substantial generalization of earlier work, since we consider the case where the generator of $X$ is an unbounded operator.
We prove existence of MMAPs via a martingale problem approach, we derive the formula for their characteristic function and we study various mathematical properties.
} % 100 words
\end{abstract}
\small{
\begin{paragraph}{\small{Keywords}}
Markov processes $\cdot$ Affine processes $\cdot$ Martingale problem $\cdot$ Analytical tractability $\cdot$ Pricing of financial instruments $\cdot$ Markov processes with discontinuous coefficients
\end{paragraph}
\vspace*{-1em}
\begin{paragraph}{\small{Mathematics Subject Classification (2020)}}
60J25 $\cdot$ 60J35 $\cdot$ 91B70
\end{paragraph}
\vspace*{-1em}
\begin{paragraph}{\small{JEL Classification}}
C02 $\cdot$ G12
\end{paragraph}
}

% https://mathscinet.ams.org/msc/msc2010.html?t=60-XX&s=&btn=Search&ls=Ct
%  	60G35   	Signal detection and filtering (aspects of stochastic processes)
%   60J25   	Continuous-time Markov processes on general state spaces
%  	60J35   	Transition functions, generators and resolvents
%   91G80   	Financial applications of other theories 
%  	91B70   	Stochastic models in economics

% https://www.aeaweb.org/econlit/jelCodes.php?view=jel
% C02 	Mathematical Methods 
% G12 	Asset Pricing • Trading Volume • Bond Interest Rates†

\normalsize
\pagestyle{fancy}
\section{Introduction}

A standard affine process $Y$ in the sense of~\cite{Duffie2003AffineFinance} can be characterized as a strong Markov process whose generator is given in terms of so-called \emph{admissible} parameters that form affine functions of the state $y$ of the process, cf.\ \cite{Keller-Ressel2011AffineRegular}.
{Markov-modulated affine processes} or MMAPs are a natural extension where the constant (non $y$-dependent)  part of these affine functions  depends on  some exogenous Markov process $X$.
A  simple example is provided by a  CIR process with Markov modulated mean reversion level.
Formally, we introduce Markov-modulated affine processes as a class of Markov processes, which we define in terms of the generator $\gen$: we assume that $\gen$ is a linear operator of the form
\[
\gen f(x, y) = \genX f(x, y) + \genY f(x, y), \quad f\in \domgen{\gen} \subset \Cinf{\stateX \times \stateY}.
\]
Here, $\genX$ (acting on $ x \mapsto f(x,y)$) is the generator of a Feller process $X$ with state space $\stateX \subset \R^d$, and for fixed $x$ the operator $\genY$ (acting on $ y \mapsto f(x,y)$) is the generator of an affine process $Y$  with state space $\stateY \subset \R^n$.
Thus, conditional on the path of $X$, the process $Y$ can be regarded as a time-inhomogeneous affine process.

Markov-modulation adds  flexibility for (financial) modelling to the class of  affine processes.
At the same time MMAPs largely  preserve the tractability of the latter class, as the  characteristic function of their  marginal distributions  can be computed via an  extension of the Riccati equations for standard affine processes.
More precisely, for an MMAP it holds that
\begin{equation}
    \label{eqn:charfun_ch1}
\EVM{z}{}{e^{\innerp{u}{Y_t}}} = \varphi(t, x; u) e^{\innerp{\psi(t, u)}{y}}, \quad z= (x,y) \in \stateX \times \stateY.
\end{equation}
The function $\psi$ is characterized by the same  system of generalized Riccati equations as in the standard affine case, whereas $\varphi$ is closely connected to a Cauchy problem involving $\psi$ and the generator $\genX$ of $X$.

Simple \emph{regime-switching} MMAPs where $X$ is a finite state Markov chain have been used previously in  finance.
For instance, \cite{Elliott2009OnFormulae} consider bond pricing in affine short rate models with regime switching; \cite{Frey2020HowModelsb}  use a portfolio credit risk model with default intensities given by a CIR process with regime switching to analyze securitization products backed by European sovereign bonds, and  \cite{Elliott2006OptionCompensatorsb}  study option pricing for regime switching  pure jump Lévy processes.
The recent contribution \cite{vanBeek2020RegimeFinance} unifies the previous examples and  provides a theory of  MMAPs for the case where the generator $\genX$ is a \emph{bounded} operator so that $X$ is a finite-activity jump process.

The present paper is a substantial extension of \cite{vanBeek2020RegimeFinance}:   we make only very mild assumptions on $\genX$  and  we allow for discontinuities in the  coefficients of $\genY$. 
In particular,   $\genX$ may be unbounded, so that  models where $X$ is a (jump-)diffusion or a jump process with infinite activity fall  within the scope of our analysis. 
This is relevant for financial applications.
There $X$ often represents state variables such as market sentiment or the economic environment, and it may be more natural to model the dynamics of these state variables by continuous processes and not by processes with piecewise constant trajectories; see also the financial applications discussed in Section~\ref{sec:applications}. 
The extension to unbounded generators  $\genX$ is also interesting from a mathematical viewpoint.
In that case the generator $\gen$ of $(X, Y)$ does not satisfy  the regularity conditions commonly imposed in the construction of Markov processes via   perturbation arguments, so that classical results from semigroup theory (cf.\ \cite[Section~3.3]{Pazy1983SemigroupsEquations} or \cite[Section~1.7]{bib:ethier-kurtz-86}) are not readily applicable. 
To deal with this issue  we choose a probabilistic approach and  use weak convergence results  to construct solutions to  the martingale problem associated with $\gen$.
% If the coefficients of $\genY$ are continuous or if the Cauchy problem for the function $\varphi$ from~\eqref{eqn:charfun_ch1} admits a classical solution, 
From relation~\eqref{eqn:charfun_ch1} we can further determine the marginal distributions of $(X,Y)$ jointly.
Classical results for martingale problems thus guarantee  the Markov property and consequently the existence of MMAPs.
We go on and analyze further mathematical  properties of MMAPs such as the Feller property,  the existence of real exponential moments and the semimartingale characteristics of  MMAPs.
Finally, in order to illustrate the wide range of modelling possibilities offered by MMAPs we discuss several applications of MMAPs in finance.

Our analysis builds  on the formal treatment of affine processes provided in~\cite{Duffie2003AffineFinance}.
Moreover, we make extensive use of the comprehensive treatment of Markov processes and martingale problems in \cite{bib:ethier-kurtz-86}.
We further contribute to the list of extensions to affine processes that has started to grow ever since the seminal work of~\cite{Duffie2003AffineFinance}.
\cite{Filipovic2005Time-inhomogeneousProcesses} introduces time-inhomogeneous affine processes.
\cite{Cuchiero2011AffineMatrices} consider matrix-valued affine processes and~\cite{cuchiero2016affine} subsequently generalize the state space by considering affine processes on symmetric cones.
% Another generalization of affine semimartingales are polynomial processes, which are treated in~\cite{Cuchiero2012PolynomialFinance} and~\cite{Filipovic2020PolynomialModels}.
More recently, \cite{keller2019affine} study a setup where the jump times of affine processes are allowed to be predictable (so that the processes no longer obey stochastic continuity) and \cite{Schmidt2020InfiniteProcesses} discuss infinite dimensional affine diffusions.

The rest of the paper is organized as follows.
Section~\ref{sec:MMAPs} introduces the necessary technical preliminaries and the formal notion of MMAPs.
In particular, we discuss the specifics of the operator  supposed to act as the generator of our class of processes.
In Section~\ref{sec:MP} we solve the martingale problem associated with this operator.
Section~\ref{sec:fourier} is concerned with  the Fourier transformation of MMAPs and the associated existence result. Further mathematical properties of MMAPs are discussed in Section~\ref{sec:further-properties}.
Finally, in Section~\ref{sec:applications} we highlight the applicability of MMAPs by discussing novel models from mathematical finance.

\section{Setup}\label{sec:MMAPs}

In this section we introduce the necessary notation and we give a formal definition of a Markov modulated affine process (abbreviated MMAP) via its generator.

\subsection{Notation and basic concepts}\label{subsec:setup}

\paragraph*{Analysis} Let $E$ be either an open set or the closure of an open set in $\C^k$.
Then, $\Bb{E}$ is the space of bounded Borel measurable functions,  $C(E)$ ($\Cb{E}$) denotes the Banach space (equipped with supremum norm $\norm{\cdot}_\infty$) of continuous (bounded) functions on $E$.
The complete subspace
$$\Cinf{E} = \{f \in C(E) \, : \, \forall \epsilon > 0 \,\, \exists \, K \subset E \text{ compact}: |f| < \epsilon \text{ on } E\setminus K\}$$ is the space of continuous functions vanishing at infinity.
We use $\Ck{k}{E}$ to denote the space of $k$ times differentiable functions $f$ on the interior of $E$ such that all partial derivatives of $f$ up to order $k$ belong to $C(E)$, and where $\Ck{\infty}{E} = \cap_{k \in \N} \Ck{k}{E}$  and $\Schwartz{m} \subset \Ck{\infty}{\R^m}$ denotes the Schwartz space of rapidly decreasing functions on $\R^m$.
The space $\Ckb{k}{E}$ consists of bounded $\Ck{k}{E}$-functions (where the derivatives are bounded as well).
The space of functions in $\Ck{k}{E}$ with compact support is $\Ckc{k}{E}$.
% The one-point compactification of $E$ is denoted by $E^\Delta = E \cup \{\Delta\}$ for $\Delta \notin E$.
% For a linear operator $T : A \to B$ (with normed spaces $A$ and $B$), we use $\norm{\cdot}_T$ to denote the graph norm, i.e.\ for $f \in A$, we have $\norm{f}_T = \norm{f}_A + \norm{Tf}_B$.
% We use $C(F, G)$ to denote the set of continuous $G$-valued functions on some set $F$, whenever $G$ is not a subset of $\R$ or $\C$ (typically there is no ambiguity  regarding the topologies of $F$ and $G$).
% Note that if $F$ is a compact Hausdorff space, and if $E$ is a Banach space, then $C(F, G)$, equipped with uniform norm, is also Banach.
The set of càdlàg functions on $[0, \infty)$ with values in some space $A$ is denoted by $D_A$.
% The standard basis in $\R^k$ is denoted by $\{e_1, \dots, e_k\}$.
When there is no ambiguity with respect to  dimensionality we write $\bm{1}$ for a vector of ones.
Similarly, we use $\Id$ to denote the identity.
We write $\bplim{r \to a} f_r$ to indicate bounded and pointwise convergence of $\{f_r\}_{r \in I} \subset \Cb{E}$ for some index set $I$.

In this paper, a MMAP is defined as a $(d + n)$-dimensional process $Z=(X,Y)$ on the state space $\smash{\stateXY = \stateX \times \stateY}$, where $\stateX$ is an open (or the closure of an open) set of $\R^d$ and $\smash{\stateY = \R^m_+ \times \R^{n-m}}$.
Throughout we use $x$ to denote a point in $\stateX$ and $y = (y^+, y^*)$ denotes a point in $\stateY$ with $y^+ \in \R^m_+$ and $y^* \in \R^{n-m}$.
An element in $\C^n = \C^m \times \C^{n-m}$ is denoted by $u = (u^+, u^*)$.
We follow the notational conventions established in \cite{Duffie2003AffineFinance} and introduce $\indexI := \{1, \dots, m\}$ and $\indexJ := \{m+1, \dots, n \}$ along with $\indexI(i) := \indexI \setminus \{i\}$ and $\indexJ(i) := \{i\} \cup \indexJ$.
For a generic $k\times k$-matrix $\alpha = (\alpha_{ij})$ and a $k$-tuple $\beta = (\beta_1, \dots, \beta_k)$, we write $\alpha_{IJ} := (\alpha_{ij})_{i \in I, j \in J}$ and $\beta_I := (\beta_i)_{i \in I}$ for indices $I, J \subset \{1, \dots, k\}$.
For $u \in \C^n$, we write $f_u(x) := e^{\innerp{u}{x}}$.
Throughout we use the following truncation function $\chi = (\chi_1, \dots, \chi_n) : \R^n \to [-1,1]^n$ given by % with $\chi(\xi) = \xi$ on a neighborhood around the origin.
\[
\chi_k(\xi) := \begin{cases} 0, \quad &\text{if } \xi_k = 0, \\ (1 \wedge |\xi_k|) \frac{\xi_k}{|\xi_k|}, &\text{otherwise.} \end{cases}
\]
The Borel $\sigma$ fields on  $\stateX$, $\stateY$ and $\stateXY$ are denoted by $\BorelSigmaAlg{\stateX}$, $\BorelSigmaAlg{\stateY}$ and $\BorelSigmaAlg{\stateXY}$.

\paragraph*{Probability} In order to avoid ambiguities with respect to different notions of  \emph{Feller} semigroups in the literature, we collect the definitions used in this paper in Appendix~\ref{appendix:defs}.
Since affine semigroups are in general \emph{not conservative}, we throughout  work on the path space ${\Omega} := D_{\stateXY^\Delta}$, where $\stateXY^\Delta := \stateXY \cup \{\Delta\}$ is the one-point compactification of $\stateXY$ with $\Delta \notin \stateXY$ describing the point at infinity.
We extend any function $f$ on $\stateXY$ to $\stateXY^\Delta$ via $f(\Delta) = 0$ and we endow $\Omega$ with the \emph{Skorohod} topology $\topology$.
Considering the product structure of the state space, we denote the canonical process by $Z_t(\omega) := (X_t(\omega), Y_t(\omega)) := \omega_t, \, t \geq 0$ for $\omega \in \Omega$.
Note that $(\Omega, \topology)$ is a Polish space and that the corresponding Borel $\sigma$-algebra is generated by the evaluation maps, see \cite[Section~VI.1b]{bib:jacod-shiryaev-03} for further details.
We use $(\calF_t^0)_{t \geq 0}$ to denote the filtration generated by $Z$ and we set $\calF^0 := \bigvee_{t \geq 0} \calF^0_t$.
Note that working on $D_{\stateXY^\Delta}$ is no restriction as \cite{cuchiero2013path} show that an affine process always has a càdlàg version.
We write $\probset{\Omega}$ for the set of probability measures on $\Omega$ and $\Pp_z \in \probset{\Omega}$ denotes a measure with $\Pp_z(Z_0 = z) = 1.$

\subsection{Markov-Modulated Affine Processes}

We aim to construct a family of probability measures $(\Pp_z)_{z\in \stateXY} \subset \probset{\Omega}$, for which $(Z, (\Pp_z))$ is a Markov process with the following properties:
$X$ modulates the coefficients of the generator of $Y$ and conditional on the paths of $X$ the process $Y$ is a time-inhomogeneous affine process.
For the characterization of $(X,Y)$ we rely on the martingale problem approach associated to a linear operator of the form
\[
\gen f(x, y) = \genX f(x,y) + \genY f(x, y), \quad f \in \domgen{\gen} \subset \Cinf{\stateXY},
\]
where $\genX$ acts on $x \mapsto f(x, y)$ and is the generator of $X$, and $\genY$ acts on $y \mapsto f(x, y)$ and for fixed $x$ is the generator of an affine process.
%Note that if $\genX$ is a bounded operator, then we find ourselves in the setup of \cite{vanBeek2020RegimeFinance}.
In the sequel, we outline the specifics of the operators $\genX$ and $\genY$ as well as of the operator domain $\domgen{\gen}$.
% in Section~\ref{sec:existunique}.

\paragraph*{The generator of $X$}
As suggested above, we characterize the process $X$ via the linear operator $\genX$ acting on functions in $\domgen\genX$ which is assumed to be a subset of $\Cinf{\stateX}$.
The restrictions we impose on the modulating process are fairly weak as we allow the generator of $X$ to be unbounded, extending the results of \cite{vanBeek2020RegimeFinance} to the case of diffusions and infinitely active jump processes.
Throughout we work with the following
\begin{assumption}\label{assumption:genX}
The operator $(\genX, \domgen\genX)$ with $\domgen{\genX} \subset \Cinf{\stateX}$ generates a Feller semigroup, denoted by $(\CsemiX_t)_{t \geq 0}$.
Moreover, $(\CsemiX_t)_{t \geq 0}$ is conservative and $\Ckc{2}{\stateX}$ is a core of $(\genX, \domgen\genX)$.
\end{assumption}

Assumption~\ref{assumption:genX}  is fulfilled by a large set of different Markov processes, for instance by conservative affine processes.
Note that the operator $\genX$ is not time-dependent, i.e.\ the Markov process $X$ is time-homogeneous.
Nevertheless, the extension to the inhomogeneous case is obvious by extending the process via $(t, X_t)_{t\geq 0}$.
The restriction that $(\CsemiX_t)_{t \geq 0}$ is conservative could be relaxed, albeit at the cost of increased technical complexity particularly in the proofs of Sections~\ref{sec:MP} and~\ref{sec:fourier}.
In the applications we have in mind there is anyhow no need for the modulating process to have finite life time.

\begin{remark}\label{remark:genX}
\cite{Schilling1998ConservativenessSemigroups} shows that under the above assumption, the semigroup $(\CsemiX_t)_{t \geq 0}$ has an extension to $\Cb{\stateX}$, which is $C_b$-Feller (cf.\ Definition~\ref{def:Cb_feller}).
We avoid confusions by denoting the corresponding semigroup on $\Cb{\stateX}$ with $(\CbsemiX_t)_{t \geq 0}$.
Moreover, we associate with the $C_b$-Feller semigroup a \emph{weak generator} $\weakgenX$ via
\begin{align}
    \label{eqn:weakgenXdom}
    \domgen\weakgenX &= \left\{f \in \Cb{\stateX} \mid \exists g  \in \Cb{\stateX} \text{ such that } \bplim{t \to 0+} \frac{\CbsemiX_t f - f}{t} = g\right\}, \\
     \label{eqn:weakgenX}
     \weakgenX f &= \bplim{t \to 0+} \frac{\CbsemiX_t f - f}{t}, \quad f \in \domgen\weakgenX.
\end{align}
Note that $\weakgenX|_{\domgen\genX} = \genX$, that $\domgen\weakgenX$ is dense with respect to the topology of weak convergence on the Banach space $(\Cb{\stateX}, \norm{\cdot}_\infty)$ and that $\weakgenX$ is closed with respect to that topology, cf.\ \cite[Chapter~I]{Dynkin1965MarkovProcesses}.
We frequently resort to the operator $(\weakgenX, \domgen\weakgenX)$, when  dealing with functions which are not in $\Cinf{\stateX}$.
\end{remark}

In~\cite{Duffie2003AffineFinance}, it is shown that the coefficients of the infinitesimal generator of a generic  affine process $ Y$ are  affine functions of $y$ that can be described  by a set of \emph{admissible} parameters.  For MMAPs  the constant part of these affine functions may depend on the state $x$ of the modulating process.
This  specification ensures the tractability of the characteristic function (see Section~\ref{sec:fourier} below).
In the following definition we introduce the Markov-modulated analogue of admissible parameters; their precise role will be clear in conjunction with Equation~\eqref{eqn:defGenY} below.
\begin{definition}\label{def:xadmissible}
We say that the parameters
\[
(a, \alpha, b, \beta, c, \gamma, m, \mu) = (a(x), \alpha, b(x), \beta, c(x), \gamma, m(x), \mu), \quad x \in \stateX,
\]
are \emph{$x$-admissible}, if for each fixed $x \in \stateX$
\begin{itemize}
    \item $a(x) \in \operatorname{Sem}^n$ with $a_{\indexI \indexI}(x) = 0$, and we have $a_{ij}(\cdot) \in \Bb{\stateX}$  for all $i,j \in \indexI \cup \indexJ$,
    \item $\alpha = (\alpha_1, \dots, \alpha_m)$ with each $\alpha_i \in \operatorname{Sem}^n$ and $\alpha_{i;\indexI(i) \indexI(i)} = 0$ for all $i \in \indexI$,
    \item $b(x) \in \stateY$ with $b_i(\cdot) \in \Bb{\stateX}$ for all $i \in \indexI \cup \indexJ$
    \item $\beta \in \R^{n \times n}$ such that $\beta_{\indexI \indexJ} = 0$ and $\beta_{i \indexI(i)} \in \R^{m-1}_+$ for all $i \in \indexI$,
    \item $c(x) \in \R_+$ with $c(\cdot) \in \Bb{\stateX}$,
    \item $\gamma \in \R^m_+$,
    \item $m(x, \cdot)$ is a Borel measure on $\stateY \backslash \{0\}$ with $\norm{M}_\infty := \sup_{x \in \stateX} M(x, \stateY \backslash \{0\}) < \infty$, where % $M(x) := M(x, \stateY \backslash \{0\})$ for
    \[
    M(x, d \xi) := (\innerp{\chi_\indexI(\xi)}{\bm{1} } + \norm{\chi_\indexJ(\xi)}^2) m(x, d\xi),
    \]
    and which satisfies for $f \in \Cb{\stateY}$ that
    \[
        \stateX\ni x\mapsto \int_{\stateY \backslash \{0\}} f(\xi) M(x, d \xi) \in \Bb{\stateX}, 
    \]
        \item $\mu = (\mu_1, \dots, \mu_m)$ where $\mu_i$ is a Borel measure on $\stateY\backslash \{0\}$ with $\mathcal{M}_i := \mathcal{M}_i(\stateY\backslash\{0\}) < \infty$ where
    \[
    \mathcal{M}_i(d\xi) := (\innerp{\chi_{\indexI(i)}(\xi)}{\bm{1}} + \norm{\chi_{\indexJ(i)}(\xi)}^2) \mu_i(d\xi).
    \]
\end{itemize}
The parameters are called \emph{strongly $x$-admissible}, if in addition $(a(x), b(x), c(x))$ are continuous in $x$, and if for $f \in \Cb{\stateY}$,
    \[
        \stateX\ni x\mapsto \int_{\stateY \backslash \{0\}} f(\xi) M(x, d \xi) \in \Cb{\stateX}.
    \]
\end{definition}
\cite{Filipovic2005Time-inhomogeneousProcesses} provides an accessible illustration of the above parameter conditions and their implications.

In this paper we define a MMAP $(X,Y)$ in terms of the generator of its associated semigroup. The following definition is needed to specify the domain of that operator.
For $f \in \Ck{2}{\stateXY}$ we set
\begin{align}
    \label{eqn:pregenXY1}
    [f]_1(x, y) &:= (1 + \norm{y^+})\left(|f(x, y)| + \norm{\nabla_y f(x, y)} + \sum_{k,l = 1}^n \left| \frac{\partial^2 f(x, y)}{\partial y_k \partial y_l}\right| \right),\\
    \label{eqn:pregenXY2}
    [f]_2(x, y) &:= |\innerp{y^*}{\beta^* \nabla_\indexJ f(x, y)}|,
\end{align}
where $\beta^* := (\beta^\transpose)_{\indexJ\indexJ} \in \R^{(n - m) \times (n - m)}$.
The advantage of the functions $[f]_1$ and $[f]_2$ in handling affine semigroups is best seen in Lemma~6.1 of \cite{Filipovic2005Time-inhomogeneousProcesses} and in Lemma~8.1 of \cite{Duffie2003AffineFinance}.

\paragraph*{The ``generator'' of $(X, Y)$} Consider for $x$-admissible parameters $(a, \alpha, b, \beta, c, \gamma, m, \mu)$ the following partial integro differential operator acting on $g \in \Ckb{2}{\stateY}$,
\begin{equation}
\begin{aligned}
        \genY g(y) &= \sum_{k,l=1}^n a_{kl}(x) \frac{\partial^2 g(y)}{\partial y_k \partial y_l} + \innerp{b(x)}{\nabla g(y)} - c(x) g(y) \\
    &\quad + \int_{\stateY\backslash \{0\}} \! \! (g(y + \zeta) - g(y) - \innerp{\nabla_\indexJ g(y)}{\chi_\indexJ(\zeta)})m(x, d \zeta)  \\
    &\quad + \sum_{k,l=1}^n \innerp{\alpha_{\indexI,kl}}{y^+}\frac{\partial^2 g(y)}{\partial y_k \partial y_l} +  \innerp{\beta y}{\nabla g(y)} - \innerp{\gamma}{y^+}g(y) \\
    &\quad + \sum_{i = 1}^m\int_{\stateY\backslash \{0\}} \!\! (g(y + \zeta) - g(y) -
     \innerp{\nabla_{\indexJ(i)} g(y)}{\chi_{\indexJ(i)}(\zeta)})y^+_i \mu_i(d \zeta).
\end{aligned}
\label{eqn:defGenY}
\end{equation}

Throughout we consider the linear operator $(\gen, \domgen{\gen})$ with
\begin{subequations}\label{eqn:defGenXY}
\begin{equation}
    \gen f(x, y) := \genX f(x, y) + \genY f(x, y)
\end{equation}
where $f$ is an element of
\begin{equation}
        \domgen{\gen} := \left\{f \in \Ck{2}{\stateXY} \cap \Cinf{\stateXY} \, \middle|
          \begin{aligned}
  &\, f(\cdot, y) \in \domgen{\genX} \text{ for all } y \in \stateY,\\
  &\, [f]_1, [f]_2 \in \Cinf{\stateXY}
  \end{aligned}
 \right\}
\end{equation}
\end{subequations}

\begin{definition}\label{def:mmaffine}
A Markov process $(Z, (\Pp_{z \in \stateXY}))$ with sub-Markov semigroup $(\semi_t)_{t \geq 0}$ on $\Bb{\stateXY}$ and  generator $(\genfull, \domgen\genfull)$ is called a  \emph{Markov-modulated affine process (MMAP)} if $\genfull|_{\domgen\gen} = \gen$. Moreover, we call $(\semi_t)_{t \geq 0}$ a \emph{Markov-modulated affine semigroup}.

If the parameters underlying $\gen$ are strongly $x$-admissible,  we refer to $(Z, \allowbreak (\Pp_z)_{z \in \stateXY})$ and $(\semi_t)_{t \geq 0}$ as a \emph{strongly regular Markov-modulated affine process} and as  a \emph{strongly regular Markov-modulated affine semigroup}, respectively.
\end{definition}

Definition~\ref{def:mmaffine} differs from the classical definition of affine processes in  \cite{Duffie2003AffineFinance}. 
There, a Markov process $Y$ is called affine if its characteristic function for fixed $t \geq 0$ is of the form $e^{\phi(t, u) + \innerp{\psi(t, u)}{y}}$,  where the functions  $\phi$ and $\psi$ are assumed to satisfy a crucial regularity assumption.
This technical condition is both necessary and sufficient for the existence of an infinitesimal generator of an affine semigroup (see e.g.\ Example~1.25.g and the surrounding discussion of \cite{bottcher2014levy} in the context of Feller processes).
\cite{Keller-Ressel2011AffineRegular} show that this regularity assumption is in fact superfluous, and further prove that stochastically continuous affine processes are Feller, cf.\ \cite[Theorem~3.5]{Keller-Ressel2011AffineRegular}.
Thus, the results of~\cite{Keller-Ressel2011AffineRegular} show that an affine process can equivalently be  characterized in terms of its generator, so that MMAPs as defined in this paper are a genuine extension of standard affine processes.

In general, $(\gen, \domgen{\gen})$ is \emph{not} the generator of a strongly continuous semigroup on $\Cinf{\stateXY}$.
On the one hand, the possible discontinuities in the $x$-admissible parameters prevent us from applying the theory of Hille-Yosida (cf.\ \cite[Section~1.2]{bib:ethier-kurtz-86}) to construct MMAPs via analytical semigroup arguments; on the other hand, perturbation arguments are not readily applicable since $\genX$ may be unbounded.

In fact, a priori it is not even clear if for a given generator $\genX$ satisfying Assumption~\ref{assumption:genX} and a given set of $x$-admissible parameters a MMAP in the sense of Definition~\ref{def:mmaffine} exists. 
We address this issue via the associated martingale problem.

\begin{definition}
Consider a linear operator $(\genericgen, \domgen{\genericgen})$ with $\domgen{\genericgen} \subset \Cinf{\stateXY}$.
We say that $(\Pp_z)_{z \in \stateXY} \subset \probset{\Omega}$ \emph{solves the martingale problem associated with $\genericgen$}, in short $\{\Pp_z\} \in \MP{\genericgen, \domgen{\genericgen}}$, if for $f \in \domgen{\genericgen}$ and for all $z \in \stateXY$ the process
\[
f(Z_t) - f(z) - \int_0^t \genericgen f(Z_s) ds, \quad t \geq 0
\]
is a $\{\calF_t\}$-martingale, where $\{\calF_t\}$ is the $\Pp_z$-completion of $\{\calF^0_t\}$.
Moreover, the martingale problem is said to be \emph{well-posed}, if for all $z$ any two solutions $\Pp_z^1, \Pp_z^2 \in \MP{\genericgen, \domgen{\genericgen}}$ have the same finite-dimensional distributions.
\end{definition}

% The next example shows that the class of MMAPs comprises also Markov processes with quite irregular behavior.
% \begin{example}\label{ex:tech_process}
% Suppose $m = 0$, $n = 1$ and $\stateX = \R$.
% We let the modulating process $X$ be a Brownian motion so that
% \[
% \genX f = \partial_{xx} f, \quad f \in \Ckc{2}{\stateX} \subset\domgen{\genX}.
% \]
% Concerning $x$-admissible parameters, we set
% \[
% a(x) := \ind{x \geq 0} 1 / 2, \qquad m(x, d \zeta) := \ind{x < 0} (1 / x^2) \delta_x (d \zeta),
% \]
% where $\delta_x(d \zeta)$ is the Dirac measure centred at $x$;  and all the remaining $x$-admissible parameters vanish.
% Intuitively, upon $X$ crossing $0$, the modulated process $Y$ switches infinitely often between being a diffusion and a pure jump process (as a Brownian motion has infinitely many crossings of zero).
% The existence of a MMAP with this generator follows from the results of Sections~\ref{sec:MP}~and~\ref{sec:fourier}; see Example~\ref{ex:tech_example_existence} for details.
% \end{example}

\section{The associated Martingale Problem}\label{sec:MP}

In this section we discuss the existence of solutions to the martingale problem associated with the linear operator $\gen = \genX + \genY$ defined in \eqref{eqn:defGenXY}.
For fixed $x \in \stateX$, it follows from \cite{Duffie2003AffineFinance} that an appropriate extension of $\genY|_{\Ckc{\infty}{\stateY}}$  generates an affine semigroup.
Thus, we can regard $\genX$ as a perturbation of $\genY$.
Standard results on martingale problems related to perturbations typically fall into two categories (cf.\ \cite[Section~4.10.]{bib:ethier-kurtz-86}):
either the perturbation is bounded (corresponding to $\genX$ being a bounded operator), or the two  operators are independent, that is $\genY$ is independent of $x$.
Hence, the main difficulties in dealing with $\MP{\gen, \domgen{\gen}}$ stem from the fact that our setup falls in neither of the two categories.
Moreover, the popular approach of showing that $(\gen, \domgen{\gen})$ satisfies the \emph{positive maximum principle} (cf.\ \cite[Theorem~4.5.4]{bib:ethier-kurtz-86}) to solve $\MP{\gen, \domgen{\gen}}$ is also not applicable in our setup since it would require $\gen$ to map into $C(\stateXY)$.

 We tackle these issues via an approximation argument: we approximate $\genX$ by a sequence of bounded operators $\{\genX_k\}_{k \in \N}$ and we show that the sequence of solutions to the martingale problem for the operator $\gen_k + \genY$, $k \in \N$, is tight and that every limit point solves the martingale problem for $\genX + \genY$.

We begin with two auxiliary results.

\begin{lemma}\label{lemma:domdenseCinf}
$\domgen{\gen}$ is a dense subset of $\Cinf{\stateXY}$.
\end{lemma}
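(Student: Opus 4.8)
\emph{Strategy.} The plan is to produce a concrete dense subspace of $\Cinf{\stateXY}$ that is contained in $\domgen{\gen}$, built out of products of functions depending on $x$ and on $y$ separately. Precisely, I would work with the linear span
\[
\mathcal{A} := \operatorname{span}\bigl\{\,(x,y)\mapsto g(x)\,h(y)\ :\ g\in\Ckc{2}{\stateX},\ h\in\Ckc{\infty}{\stateY}\,\bigr\},
\]
and establish two claims: (i) $\mathcal{A}\subseteq\domgen{\gen}$, and (ii) $\mathcal{A}$ is dense in $\Cinf{\stateXY}$. Since $\domgen{\gen}$ is a linear space, for (i) it is enough to check a single product $f(x,y)=g(x)h(y)$.

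\emph{Step (i): membership.} Here I would simply verify the defining conditions of $\domgen{\gen}$ in \eqref{eqn:defGenXY}. Because $g\in\Ck{2}{\stateX}$ and $h\in\Ck{\infty}{\stateY}$, the product $f$ is jointly $\Ck{2}$; as both factors have compact support, $f$ has compact support and hence $f\in\Cinf{\stateXY}$. For every fixed $y$ the section $f(\cdot,y)=h(y)\,g$ is a scalar multiple of $g\in\Ckc{2}{\stateX}\subseteq\domgen{\genX}$, the inclusion being part of Assumption~\ref{assumption:genX}, which states that $\Ckc{2}{\stateX}$ is a core; hence $f(\cdot,y)\in\domgen{\genX}$. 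Finally, since $\nabla_y f=g\,\nabla_y h$ and $\partial^2 f/\partial y_k\partial y_l=g\,\partial^2 h/\partial y_k\partial y_l$, both $[f]_1$ and $[f]_2$ factor as $|g(x)|$ times a continuous function of $y$ with compact support; the polynomial weights $(1+\norm{y^+})$ and $y^*$ appearing in \eqref{eqn:pregenXY1}--\eqref{eqn:pregenXY2} are therefore multiplied by compactly supported factors, so $[f]_1,[f]_2$ are continuous with compact support and thus lie in $\Cinf{\stateXY}$. This is the only place that requires care, and choosing the $y$-factor to have compact support is exactly what trivializes the weighted conditions; consequently no genuine obstacle arises in this lemma.

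\emph{Step (ii): density.} The space $\stateXY=\stateX\times\stateY$ is locally compact Hausdorff, so I would invoke the Stone--Weierstrass theorem for $\Cinf$ on a locally compact space. The set $\mathcal{A}$ is a subalgebra, since the product of $g_1\otimes h_1$ and $g_2\otimes h_2$ equals $(g_1g_2)\otimes(h_1h_2)$ with $g_1g_2\in\Ckc{2}{\stateX}$ and $h_1h_2\in\Ckc{\infty}{\stateY}$, and it is closed under complex conjugation because each factor class is. Using bump functions one checks that $\mathcal{A}$ separates points of $\stateXY$ and vanishes nowhere: given $(x_1,y_1)\neq(x_2,y_2)$, if $x_1\neq x_2$ one chooses $g$ with $g(x_1)\neq g(x_2)$ and $h$ equal to $1$ near both $y_1$ and $y_2$ (and symmetrically if $y_1\neq y_2$), while for any fixed point a product of bumps equal to $1$ there is nonzero. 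Stone--Weierstrass then yields that $\mathcal{A}$ is dense in $\Cinf{\stateXY}$, and combined with the inclusion from (i) this establishes the claim.
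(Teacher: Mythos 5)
Your proof is correct, but it takes a genuinely different route from the paper. The paper builds its dense subset $\overline{\Theta}_0$ from products $g(x)h(y)$ where $g\in\Ckc{2}{\stateX}$ but $h$ is a Fourier-type function $h(y)=\int_{\R^{n-m}}e^{\innerp{(v,iq)}{y}}\tilde g(q)\,dq$ with $v\in\C^m_{--}$; membership of $[f]_1,[f]_2$ in $\Cinf{\stateXY}$ is then verified via Schwartz-space estimates (the weight $(1+\norm{y^+})$ is absorbed by the exponential decay in $y^+$ and the rapid decay in $y^*$), and density of the span is outsourced to the proof of Proposition~6.3 in \cite{Filipovic2005Time-inhomogeneousProcesses}. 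You instead take $h\in\Ckc{\infty}{\stateY}$, which trivializes the weighted conditions by compact support, and prove density directly via the locally compact Stone--Weierstrass theorem. Both arguments are sound; yours is more elementary and self-contained for this lemma. What the paper's choice buys is reuse: the same set $\overline{\Theta}_0$ reappears in the proof of the Feller property (Proposition~\ref{prop:feller}), where the exponential structure of $h$ is essential to connect $P_t\tilde f$ with the transform formula $\varphi^f(t,x;u)e^{\innerp{\psi(t,u)}{y}}$ --- compactly supported $h$ would not serve there. One small point worth making explicit in your Step~(ii): since $\stateY=\R^m_+\times\R^{n-m}$ has a boundary, the bump functions on $\stateY$ should be understood as restrictions of bumps on $\R^n$ (so that they are nonzero and separate points also at boundary points with $y^+_i=0$); with that reading your separation and non-vanishing claims go through without change.
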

\begin{proof}
% Follows immediately from Lemma~\ref{lemma:Cc2_dense} below by noting that $\Ckc{2}{\stateXY}$ is a dense subspace of $\Cinf{\stateXY}$.
As in the proof of \cite[Proposition~6.3.]{Filipovic2005Time-inhomogeneousProcesses}, we introduce the following set
\[
\Theta_0 := \left\{ h \in \Ck{\infty}{\stateY} \middle|
\begin{aligned}
  &\, h(y^+, y^*) = \textstyle{\int_{\R^{n-m}} e^{\innerp{(v, iq)}{(y^+, y^*)}} g(q) dq}, \\
  &\, v \in \C^m_{--}, \, g \in \Ckc{\infty}{\R^{n-m}}
\end{aligned} \right\}.
\]
We further set $\overline{\Theta}_0 := \{ g h \mid g \in \Ckc{2}{\stateX}, \, h \in \Theta_0\}$.
Pick an arbitrary $f = gh \in \overline{\Theta}_0$.
As $\Ckc{2}{\stateX} \subset \domgen{\genX}$, we have $f(\cdot, y) \in \domgen{\genX}$ for all $y \in \stateY$.

Note that $\Theta_0 \subset \Schwartz{n}$.
Thus, we have (with slight abuse of notation) for an arbitrary multiindex $\alpha \in \N_0^n$ that $(1 + \norm{y^+})h \in \Schwartz{n}$ and $\nabla^\alpha h \in \Schwartz{n}$ (cf.\ \cite[Section~V.2]{Werner2018Funktionalanalysis}).
Since $g$ has compact support we infer that $[f]_1, [f]_2 \in \Cinf{\stateXY}$ and thus, $\overline{\Theta}_0 \subset \domgen{\gen}$.
Since the linear span of $\overline{\Theta}_0$ is dense in $\Cinf{\stateXY}$ (see the proof of \cite[Proposition~6.3.]{Filipovic2005Time-inhomogeneousProcesses}), we arrive at the result.
\end{proof}
\begin{lemma}\label{lemma:dom_closed_square}
$\domgen{\gen}$ is an algebra.
\end{lemma}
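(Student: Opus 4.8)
The plan is to show $\domgen{\gen}$ is closed under pointwise products; stability under linear combinations is clear since $\domgen{\gen}$ is a vector space. So fix $f,g\in\domgen{\gen}$, put $h:=fg$, and verify the three membership conditions for $h$. That $h\in\Ck{2}{\stateXY}$ is the Leibniz rule, and $h\in\Cinf{\stateXY}$ holds because elements of $\Cinf{\stateXY}$ are bounded and vanish at infinity, a property obviously inherited by products. Thus the only substantive points are $[h]_1,[h]_2\in\Cinf{\stateXY}$ and $h(\cdot,y)\in\domgen{\genX}$ for every $y$.

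For the functionals I would expand all $y$-derivatives of $h$ by Leibniz and group terms. Since $[f]_1\in\Cinf{\stateXY}$ already dominates $(1+\norm{y^+})$ times $f$ and its first and second $y$-derivatives, each of these quantities is bounded, and likewise for $g$. Carrying the single factor $(1+\norm{y^+})$ appearing in $[h]_1$ onto whichever factor carries the top-order derivative writes $[h]_1$ as a finite sum of terms of the shape (bounded)$\,\times[f]_1$ or (bounded)$\,\times[g]_1$, each lying in $\Cinf{\stateXY}$; hence $[h]_1\in\Cinf{\stateXY}$. For $[h]_2$, from $\nabla_\indexJ h=f\,\nabla_\indexJ g+g\,\nabla_\indexJ f$ one gets directly $[h]_2\le|f|\,[g]_2+|g|\,[f]_2\in\Cinf{\stateXY}$.

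The genuine obstacle is the condition $h(\cdot,y)\in\domgen{\genX}$, because the domain of the unbounded generator $\genX$ is a priori not an algebra. Fix $y$ and set $p:=f(\cdot,y),\,q:=g(\cdot,y)$, both lying in $\domgen{\genX}\cap\Ck{2}{\stateX}\cap\Cinf{\stateX}$. Under Assumption~\ref{assumption:genX}, restricting $\genX$ to its core $\Ckc{2}{\stateX}$ and invoking the positive maximum principle identifies $\genX$ there with an integro-differential (L\'evy-type) operator $\mathsf{L}^X$; conservativeness forces the killing term of $\mathsf{L}^X$ to vanish, and the $C_b$-Feller extension of Remark~\ref{remark:genX} lets one evaluate $\weakgenX$ on the bounded $C^2$ function $pq$ through $\mathsf{L}^X$. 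The carr\'e-du-champ identity then gives the pointwise relation
\[
\weakgenX(pq)=p\,\weakgenX q+q\,\weakgenX p+\Gamma^X(p,q),
\]
where $\Gamma^X(p,q)(x)=2\sum_{i,j}a^X_{ij}(x)\,\partial_{x_i}p(x)\,\partial_{x_j}q(x)+\int\bigl(p(x+\eta)-p(x)\bigr)\bigl(q(x+\eta)-q(x)\bigr)\,N(x,d\eta)$ is the carr\'e-du-champ operator of $\genX$. As $p,q\in\domgen{\genX}$ we have $\weakgenX p=\genX p$ and $\weakgenX q=\genX q$ in $\Cinf{\stateX}$, so the first two terms on the right, being bounded functions times $\Cinf{\stateX}$-functions, are in $\Cinf{\stateX}$.

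It remains to prove $\Gamma^X(p,q)\in\Cinf{\stateX}$: the diffusion part is treated via the decay of the gradients $\nabla p,\nabla q$ at infinity against the diffusion coefficients, and the jump part via dominated convergence, using the L\'evy-kernel integrability encoded in $\mathsf{L}^X$ together with the vanishing of $p,q$ and their increments --- both expressions of the mapping property that $\mathsf{L}^X$ inherits from the Feller property of $\genX$. Granting $\weakgenX(pq)\in\Cinf{\stateX}$, I close with the standard upgrade from the weak to the strong generator: since $pq\in\Cinf{\stateX}$ and $(\CsemiX_t(pq)-pq)/t\to\weakgenX(pq)$ boundedly and pointwise, bounded pointwise convergence yields weak convergence against finite measures, and the coincidence of the weak and strong infinitesimal generators of the Feller semigroup $(\CsemiX_t)_{t\ge0}$ on $\Cinf{\stateX}$ gives $pq\in\domgen{\genX}$ with $\genX(pq)=\weakgenX(pq)$. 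Since $y$ was arbitrary, $h(\cdot,y)\in\domgen{\genX}$ for all $y$, and $h\in\domgen{\gen}$. I expect the verification that the carr\'e-du-champ term $\Gamma^X(p,q)$ vanishes at infinity to be the main difficulty, as it is exactly here that one must convert the abstract Feller property of $\genX$ into quantitative control of its diffusion coefficients and L\'evy kernel.
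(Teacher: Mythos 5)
Your treatment of $fg\in\Ck{2}{\stateXY}\cap\Cinf{\stateXY}$ and of the functionals $[fg]_1,[fg]_2$ is fine and matches the paper, which records the same Leibniz estimate $\norm{[fg]_1}_\infty\leq K\norm{[f]_1}_\infty\norm{[g]_1}_\infty$. The problem is the step you yourself identify as the crux, $fg(\cdot,y)\in\domgen{\genX}$, where your argument has two genuine gaps. First, the identity $\weakgenX(pq)=p\,\weakgenX q+q\,\weakgenX p+\Gamma^X(p,q)$ is obtained by \emph{evaluating} $\weakgenX$ on $pq$ through the L\'evy-type representation $\mathsf{L}^X$; but the assertion that $\weakgenX$ (or any extension of $\genX$) acts on the bounded $C^2$ function $pq$ via $\mathsf{L}^X$ presupposes that $pq$ lies in the domain of such an extension, which is essentially the statement to be proved. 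Courr\`ege's theorem only identifies $\genX$ with $\mathsf{L}^X$ on $\Ckc{2}{\stateX}$; extending this pointwise formula to all of $C^2\cap\Cinf{\stateX}$ requires additional structure (e.g.\ a ``rich'' Feller process with controlled symbol) that Assumption~\ref{assumption:genX} does not supply. Second, the claim $\Gamma^X(p,q)\in\Cinf{\stateX}$ is not just ``the main difficulty'' --- it is unprovable with the data at hand. Membership of $f$ in $\domgen{\gen}$ constrains only the $y$-derivatives (through $[f]_1,[f]_2$); nothing forces $\nabla_x p$ and $\nabla_x q$ to decay at infinity, and Assumption~\ref{assumption:genX} places no growth bound on the diffusion matrix $a^X$ or the kernel $N(x,d\eta)$ of $\mathsf{L}^X$. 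So the diffusion and jump parts of $\Gamma^X(p,q)$ cannot be shown to vanish at infinity, and the subsequent weak-to-strong upgrade never gets off the ground.

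The paper avoids all of this with a soft argument that you should adopt: since $\Ckc{2}{\stateX}$ is a core of $(\genX,\domgen{\genX})$ by Assumption~\ref{assumption:genX}, one approximates $p=f(\cdot,y)$ and $q=g(\cdot,y)$ in the graph norm $\norm{\cdot}_\genX$ by sequences $(f_k),(g_k)\subset\Ckc{2}{\stateX}$; the products $f_kg_k$ stay in $\Ckc{2}{\stateX}$ because that space is an algebra, converge to $pq$ (the factors being uniformly bounded), and closedness of $\genX$ on the graph-norm closure of its core yields $pq\in\domgen{\genX}$. No representation of $\genX$ as an integro-differential operator, no carr\'e du champ, and no quantitative control of coefficients is needed --- which is precisely why the lemma can be stated under the weak Assumption~\ref{assumption:genX} alone.
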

\begin{proof}
Let $f, g \in \domgen{\gen}$.
Obviously, $fg \in \Ck{2}{\stateXY} \cap \Cinf{\stateXY}$.
Moreover, using the product rule we get the estimates (see also \cite[Section~8]{Duffie2003AffineFinance})
\[
    \norm{[fg]_1}_\infty \leq K \norm{[f]_1}_\infty \norm{[g]_1}_\infty, %\qquad \norm{[fg]_2}_\infty \leq K_2 \norm{[f]_2}_\infty \norm{[g]_2}_\infty
\]
for some constant $K$.
Thus, we have $[fg]_1, [fg]_2 \in \Cinf{\stateXY}$.

It remains to consider $\genX (fg)(\cdot, y)$ for arbitrary $y \in \stateY$.
Since $\Ckc{2}{\stateXY}$ is a core of $\domgen{\genX}$, we can find sequences $(f_k), (g_k) \in \Ckc{2}{\stateX}$ converging to $f(\cdot, y)$ and $g(\cdot, y)$ w.r.t.\ $\norm{\cdot}_\genX$.
Since the $f_k$ and $g_k$ are bounded it holds that $\lim_{k \to \infty} \norm{f_k g_k - f(\cdot, y) g(\cdot, y)}_\infty = 0$ with $\{f_k g_k\} \subset \Ckc{2}{\stateX}$ as $\Ckc{2}{\stateX}$ is an algebra.
And since the $\norm{\cdot}_\genX$-closure of $\Ckc{2}{\stateX}$ is $\domgen{\genX}$, the result follows.
\end{proof}

%From perturbation theory, it is a well known result that if $\genX$ is a bounded generator, then the perturbed operator (with slight abuse of notation) $\genX + \genY$ still generates a strongly continuous semigroup.
%A martingale problem associated with an infinitesimal generator is merely a probabilistic reformulation of the semigroup property induced by the underlying generator.
%The analytical results from perturbation theory of \cite[Section~III.1.]{Engel2000One-ParameterEquations} or the more specialized setting of \cite[Section~4]{vanBeek2020RegimeFinance} readily establish that $(\gen, \domgen{\gen})$ generates a strongly continuous semigroup in the more restricted situation with \emph{strongly} $x$-admissible parameters and with $\genX$ being a bounded operator.

Next, we introduce an approximating operator sequence $\{\genX_k\}_{k \in \N} $  and we study the martingale problem associated with the operator $\gen_k= \genX_k + \genY$.  
Since $(\genX, \domgen{\genX})$ generates a strongly continuous semigroup by Assumption~\ref{assumption:genX}, the Hille-Yosida theorem  implies first that $\domgen{\genX}$ is dense in $\Cinf{\stateX}$, second that $\operatorname{Range}(\alpha \operatorname{Id} - \genX) = \Cinf\stateX$ for any $\alpha > 0$ and third that $\genX$ is dissipative, i.e.\ $\norm{\alpha f - \genX f} \geq \alpha\norm{f}$ for any $\alpha > 0$ and $f \in \domgen{\genX}$. 
Moreover, we consider for each $k$ the following \emph{Yosida approximation}
\begin{equation}
    \label{eqn:yosida}
    \genX_k := k \genX G_k,
\end{equation}
where $G_k = (k \Id - \genX)^{-1}$.
Note that the operator $G_k$, the resolvent of the semigroup $(\CsemiX_t)_{t \geq 0}$, exists by standard results from semigroup theory, see e.g.\ Section~VII.4 of \cite{Werner2018Funktionalanalysis}. 
Then,  $\{\genX_k\}_{k \in \N}$ is a sequence of \emph{bounded} operators, and it holds that $\lim_{k \to \infty} \genX_k f = \genX f$ (where the limit is pointwise for $f \in \domgen{\genX}$), since $(\genX, \domgen{\genX})$ generates a strongly continuous semigroup, see for instance \cite[Chapter~1]{bib:ethier-kurtz-86} for details.

\begin{lemma}\label{lemma:boundedPerturbation}
Consider the operator $(\gen_k, \domgen{\gen})$, where
\[
\gen_k f(x, y) := \genX_k f(x, y) + \genY f(x, y), \quad f \in \domgen{\gen}.
\]
Then, there exists a solution to the martingale problem associated with $(\gen_k, \domgen{\gen})$.
\end{lemma}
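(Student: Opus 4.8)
The plan is to exploit that $\genX_k$ is a \emph{bounded} operator of pure-jump type and to construct a solution of $\MP{\gen_k, \domgen{\gen}}$ by interlacing the frozen-$X$ affine dynamics with the jumps of $X$, in the spirit of the bounded-perturbation theory in \cite[Section~4.10]{bib:ethier-kurtz-86}. First I would rewrite $\genX_k$ in jump form. Since $G_k=(k\Id-\genX)^{-1}$ satisfies $\genX G_k=kG_k-\Id$, we have $\genX_k=k(kG_k-\Id)$, and because $(\CsemiX_t)_{t\ge0}$ is a conservative Feller semigroup, $kG_k=\int_0^\infty k e^{-kt}\CsemiX_t\,dt$ extends to a Markov operator represented by a probability kernel $\nu_k(x,dx')$ on $\stateX$ (cf.\ Remark~\ref{remark:genX}). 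Acting on the $x$-variable this yields
\[
\genX_k f(x,y)=k\int_{\stateX}\bigl(f(x',y)-f(x,y)\bigr)\,\nu_k(x,dx'),\qquad f\in\domgen{\gen},
\]
so that $\genX_k$ generates, on the $x$-component alone, a pure-jump Feller process $X^k$ with constant total intensity $k$ that leaves the $y$-component untouched; being bounded, $\genX_k$ generates $e^{t\genX_k}$ and $X^k$ is the unique solution of its own martingale problem.

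Next I would isolate the two building blocks. On the one hand, freezing $X\equiv x$, the operator $\genY$ with coefficients evaluated at $x$ is, by \cite{Duffie2003AffineFinance} and \cite{Keller-Ressel2011AffineRegular}, the generator of a unique affine process $Y^x$ on $\stateY$; the bounds encoded in $\domgen{\gen}$ through $[f]_1$ and $[f]_2$ guarantee that $\genY f(x,\cdot)$ is well defined for every $f\in\domgen{\gen}$ and that the affine martingale property, known on a core such as $\Ckc{\infty}{\stateY}$, extends to all $f(x,\cdot)$ with $f\in\domgen{\gen}$. On the other hand, the jump times of $X^k$ form a rate-$k$ Poisson process and hence do not accumulate on compacts. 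I would then build $Z^k=(X^k,Y^k)$ as follows: start at $z=(x,y)$; let $Y^k$ run as the affine process with frozen parameter $x$ while $X^k\equiv x$ up to the first $\mathrm{Poisson}(k)$ time $\tau_1$; at $\tau_1$ let $X^k$ jump according to $\nu_k(X^k_{\tau_1-},\cdot)$ while $Y^k$ stays continuous; and restart with the new value of $X^k$. Since only finitely many $X$-jumps occur on any $[0,t]$, $Z^k$ is well defined up to the (possible) killing time of the affine pieces, consistently with the path space $D_{\stateXY^\Delta}$.

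It then remains to verify that $Z^k$ solves $\MP{\gen_k,\domgen{\gen}}$. For $f\in\domgen{\gen}$ I would apply Dynkin's formula on each inter-jump interval and sum over jumps. On $[\tau_i,\tau_{i+1})$ the component $X^k$ is constant and $Y^k$ is affine, so the continuous part of $f(Z^k_t)-f(z)$ has compensator $\int_0^t\genY f(Z^k_s)\,ds$; the jumps of $X^k$, occurring at rate $k$ with post-jump law $\nu_k$ and leaving $Y^k$ continuous, have compensator $\int_0^t k\int_{\stateX}\bigl(f(x',Y^k_s)-f(X^k_s,Y^k_s)\bigr)\nu_k(X^k_s,dx')\,ds=\int_0^t\genX_k f(Z^k_s)\,ds$. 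Adding the two contributions shows that $f(Z^k_t)-f(z)-\int_0^t\gen_k f(Z^k_s)\,ds$ is a martingale, i.e.\ the law $\Pp_z$ of $Z^k$ solves the martingale problem; measurability of $x\mapsto Y^x$ (the Feller property of affine processes) and of $\nu_k$ ensures that $z\mapsto\Pp_z$ is a well-defined measurable family.

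The main obstacle I anticipate is not the interlacing itself but the low regularity of the coefficients: the $x$-admissible parameters are merely Borel measurable, so $\gen_k$ need not map $\domgen{\gen}$ into $C(\stateXY)$ and the frozen-$X$ affine generator depends discontinuously on $x$ — precisely what blocks the semigroup and positive-maximum-principle routes. The construction circumvents this because on each inter-jump interval $X^k$ is \emph{constant}, so only the fixed-$x$ affine dynamics enter, and these are insensitive to how the parameters vary in $x$. The secondary point requiring care is the passage of the affine martingale property from a core to all of $\domgen{\gen}$ and the measurable dependence of the affine transition kernels on $x$, both of which I would settle using the estimates in $[f]_1$, $[f]_2$ together with the Feller property of \cite{Keller-Ressel2011AffineRegular}.
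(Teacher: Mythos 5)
Your proposal is correct and follows essentially the same route as the paper: the paper likewise reduces the claim to representing the Yosida approximation as a bounded pure-jump operator, $\genX_k g(x)=k\int_{\stateX}(g(\zeta)-g(x))\,\pi^X_k(x,d\zeta)$ (obtained there via the Riesz representation of the positive operator $kG_k$, equivalent to your resolvent-integral formula), and then invokes \cite[Proposition~4.10.2]{bib:ethier-kurtz-86}. The only difference is that you unpack the proof of that proposition — the Poisson-time interlacing of frozen-$x$ affine dynamics with the jumps of $X^k$ — which the paper simply cites.
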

\begin{proof}
To prove the lemma we rely on \cite[Proposition 4.10.2]{bib:ethier-kurtz-86}.
Accordingly, we first show that for each $k$ there exists a kernel $\pi^X_k$ such that we obtain the following representation for any $g \in \domgen{\genX}$,
\begin{equation}
\label{eqn:repres_Riesz_proof_bounded}
    \genX_k g(x) = {k} \int_{\stateX} (g(\zeta) - g(x)) \pi^X_k(x, d \zeta), \quad \forall x\in {\stateX}.
\end{equation}
% In probabilistic terms, \eqref{eqn:repres_Riesz_proof_bounded} shows that the approximation of $\genX$ by $\{\genX_k\}_{k \in \N}$ corresponds to the approximation of the Feller process $X$ by a sequence of pure jump processes.
To establish \eqref{eqn:repres_Riesz_proof_bounded} note first  that the operator $G_k$ from~\eqref{eqn:yosida} is a bijection from $\Cinf{\stateX}$ onto $\domgen{\genX}$.
Additionally, since $G_k$ is a positive linear operator, we know by the Riesz representation theorem that there exists a kernel $\pi^X_k$ such that for any $f \in \Cinf{\stateX}$
\[
k G_k f(x) = \int_{\stateX} f(\zeta) \pi^X_k(x, d \zeta), \quad \forall x\in {\stateX}.
\]
Pick an arbitrary $g \in \domgen{\genX}$.
Consequently, since $(k\Id - \genX)g \in \Cinf{\stateX}$ by the Hille-Yosida theorem, it holds that
\[
k g(x) =(k \Id -\genX)k G_k  g (x) = \int_{\stateX} (k \Id - \genX) g(\zeta) \pi^X_k(x, d \zeta), \quad  x \in \stateX.
\]
We further have that
$
\int_{\stateX} \genX g(\zeta) \pi^X_k(\cdot, d \zeta) = \genX_k g,
$
and so overall we obtain the representation~\eqref{eqn:repres_Riesz_proof_bounded} for any $g \in \domgen{\genX}$.

Next we explain how to apply \cite[Proposition 4.10.2]{bib:ethier-kurtz-86}. First note that \eqref{eqn:repres_Riesz_proof_bounded} gives for $f \in B_b(\stateXY)$ that
$$\genX_k f(x, y) = \mathfrak{B}f(x,y):= \int_{\stateX} (f(\zeta,y) - f(x,y)) \pi^X_k(x, d \zeta)\,.$$
We define the operator $\mathfrak{A} \colon\domgen{\gen} \to B_b(\stateXY)$ by $\mathfrak{A}f(x,y) =  \genY f(x, y)$ (the fact that  $\genY f(x, y)$ is bounded for $f \in \domgen{\gen}$ is shown in the proof of Theorem~\ref{thm:MP} below). 
Clearly, for  $f \in \domgen{\gen}$ it holds that $\gen_k f = \mathfrak{B} f + \mathfrak{A} f$.

Fix $z=(x,y) \in \stateXY.$ A solution $\Pp_z \in \probset{\Omega}$ of the martingale problem for $\mathfrak{A}$ is a measure on $D_{\stateXY^\Delta}$ such that $\Pp_z (X_t =x \text{ for all } t \ge 0) =1$.
Moreover, under $\Pp_z$,  $Y$ is an affine process with $Y_0=y$ a.s. and generator $\genY $; such a measure exists by standard results on affine processes.  
\cite[Proposition 4.10.2]{bib:ethier-kurtz-86} now gives the existence of a solution of the martingale problem for $\mathfrak{B} + \mathfrak{A}$ and hence the claim.
\end{proof}

Note that in the proof of the above lemma, \eqref{eqn:repres_Riesz_proof_bounded} shows that in probabilistic terms, the approximation of $\genX$ by $\{\genX_k\}_{k \in \N}$ corresponds to the approximation of the Feller process $X$ by a sequence of pure jump processes.

\begin{theorem}\label{thm:MP}
Consider the linear operator $(\gen, \domgen{\gen})$ given by~\eqref{eqn:defGenXY}, where the underlying parameters $(a, \alpha, b, \beta, c, \gamma, m, \mu)$ are $x$-admissible and where $\genX$ satisfies Assumption~\ref{assumption:genX}.
Then there exists a solution to the martingale problem associated with $(\gen, \domgen{\gen})$.%$\{\Pp_z\} \in \MP{\gen, \domgen{\gen}}$.
\end{theorem}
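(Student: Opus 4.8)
The plan is to prove existence of a solution to $\MP{\gen, \domgen{\gen}}$ by the approximation strategy announced just before the statement: approximate the unbounded generator $\genX$ by its bounded Yosida approximations $\genX_k$, obtain a solution $\Pp^k_z$ to $\MP{\gen_k, \domgen{\gen}}$ for each $k$ via Lemma~\ref{lemma:boundedPerturbation}, establish relative compactness of the family $\{\Pp^k_z\}_{k \in \N}$, and then show that any weak limit point solves the martingale problem for the original operator $\gen = \genX + \genY$. First I would fix an initial point $z \in \stateXY$ and take the sequence $\{\Pp^k_z\}$ on $\Omega = D_{\stateXY^\Delta}$ provided by Lemma~\ref{lemma:boundedPerturbation}.

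The second step is \emph{tightness} of $\{\Pp^k_z\}_{k \in \N}$ in $\probset{\Omega}$. Since the state space is compactified (we work on $\stateXY^\Delta$, which is compact), the marginals are automatically tight, so the issue is control of the oscillation of paths. I would invoke the standard criterion for relative compactness of solutions to martingale problems, e.g.\ \cite[Theorem~3.9.4]{bib:ethier-kurtz-86}, which reduces tightness to a uniform (in $k$) estimate on the quantities $\gen_k f(Z_s)$ for $f$ in a suitable core. The key is that for each $f \in \domgen{\gen}$ the ``affine part'' $\genY f$ is identical across all $k$, and that $\genX_k f \to \genX f$ with $\sup_k \norm{\genX_k f}_\infty < \infty$ whenever $f \in \domgen{\genX}$; combined with $\domgen{\gen}$ being dense in $\Cinf{\stateXY}$ (Lemma~\ref{lemma:domdenseCinf}) and an algebra (Lemma~\ref{lemma:dom_closed_square}), the Aldous--Rebolledo-type bounds yield the required estimate. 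Here the bounds $[f]_1, [f]_2 \in \Cinf{\stateXY}$ built into the domain are exactly what tames the linear growth of the affine coefficients $a(x), b(x), \alpha, \beta$, guaranteeing $\gen_k f \in \Cinf{\stateXY}$ uniformly.

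The third step is \emph{identification of the limit}. By tightness there is a subsequence with $\Pp^{k_j}_z \Rightarrow \Pp_z$ weakly on $\Omega$. For each fixed $f \in \domgen{\gen}$ the process $M^{k}_t := f(Z_t) - f(Z_0) - \int_0^t \gen_k f(Z_s)\,ds$ is a $\Pp^k_z$-martingale. To pass to the limit I would show that $\gen_{k} f \to \gen f$ in a sense strong enough to carry the martingale property through weak convergence: since $\genY f$ is unchanged and $\genX_{k} f \to \genX f$ pointwise with a uniform sup-norm bound (dominated convergence), one checks that $\EV{k}{\big(M^{k}_{t} - M^{k}_{s}\big)\,h(Z_{s_1}, \dots, Z_{s_p})} \to 0$ for bounded continuous $h$ and times $s_1 \le \dots \le s_p \le s \le t$, using the continuous-mapping/portmanteau machinery of \cite[Chapter~3]{bib:ethier-kurtz-86}. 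A technical care point is that the integral functional $\omega \mapsto \int_0^t \gen f(\omega_r)\,dr$ must be shown to be a.s.\ continuous with respect to the Skorohod topology at $\Pp_z$-typical paths, which follows because $\gen f \in \Cinf{\stateXY}$ and the set of times where the limit path jumps is $\Pp_z$-negligible.

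The main obstacle I anticipate is precisely the passage to the limit in the compensator $\int_0^t \gen_{k} f(Z_s)\,ds$ under only pointwise (not uniform) convergence $\genX_{k}f \to \genX f$: weak convergence of the laws does not commute with pointwise convergence of the integrand automatically, and the discontinuities permitted in the $x$-admissible coefficients (as dramatized in Example~\ref{ex:tech_process}) mean $\gen f$ need not be continuous as a function on $\stateXY$, only measurable. I would handle this by restricting attention to the generating subclass $\overline{\Theta}_0 \subset \domgen{\gen}$ from Lemma~\ref{lemma:domdenseCinf}, on which the estimates are cleanest, and by exploiting that $\genX_k f = k\genX G_k f$ together with the uniform boundedness $\sup_k \norm{\genX_k f}_\infty \le \norm{\genX f}_\infty$ (dissipativity) to apply bounded convergence inside the time integral after first conditioning. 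A secondary subtlety is ensuring the limit measure $\Pp_z$ satisfies $\Pp_z(Z_0 = z) = 1$, which is immediate from weak convergence since each $\Pp^k_z$ places unit mass on $\{Z_0 = z\}$ and the evaluation map $\omega \mapsto \omega_0$ is continuous. Once these points are settled, $\{\Pp_z\}_{z \in \stateXY} \in \MP{\gen, \domgen{\gen}}$, completing the proof.
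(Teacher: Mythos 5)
Your proposal follows essentially the same route as the paper's proof: Yosida approximation of $\genX$, existence for the bounded perturbations via Lemma~\ref{lemma:boundedPerturbation}, tightness through the Ethier--Kurtz relative-compactness criteria using Lemmas~\ref{lemma:domdenseCinf} and~\ref{lemma:dom_closed_square} together with the uniform bound $\sup_k \norm{\genX_k f}_\infty < \infty$, and identification of a weak limit point as a solution of $\MP{\gen, \domgen{\gen}}$. The limit-passage subtlety you flag (possible discontinuity of $\gen f$ in $x$) is exactly the delicate point the paper treats by restricting to the continuity times $T_\Pp$ and invoking the uniform convergence $\gen_{k(j)} f \to \gen f$, so your outline does not diverge substantively from the published argument.
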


\begin{proof}
For a closed subset $U \times V \subset \stateX \times \stateY$ and a function $f \in \domgen{\gen}$, we introduce
\[
\norm{f}_{\sharp; U \times V} := \sup_{(x, y) \in U \times V} \{[f]_1(x, y) + [f]_2(x, y) \},
\]
where $[f]_1$ and $[f]_2$ were introduced in \eqref{eqn:pregenXY1} and \eqref{eqn:pregenXY2}.
The proof is divided into two steps.
First, we show that the measures $\Pp^k_z \in \MP{\gen_k, \domgen{\gen}}, \, k \in \N$ form a tight collection.
In the second step, we collect several classical results to conclude that there exist limit points in $(\Pp^k_z)_{k \in \N}$ that solve $\MP{\gen, \domgen{\gen}}$.

\paragraph*{Step 1:\ Tightness}  Fix an arbitrary $z \in \stateXY$.
By Lemma~\ref{lemma:boundedPerturbation} there is for each $k$ a measure $\Pp_z^k \in \MP{\gen_k, \domgen{\gen}}$ with the operator $\gen_k$ acting on $f \in \domgen{\gen}$ via
% \[
% f \in \domgen{\gen_k}:= \{f \in \Ck{2}{\stateXY} \cap \Cinf{\stateXY} \mid f(\cdot, y) \in \domgen{\genX_k} \text{ for all } y \in \stateY, [f]_1, [f]_2 \in \Cinf{\stateXY} \}
% \]
% via
\[
\gen_k f(x, y) = \genX_k f(x, y) + \genY f(x, y).
\]
To keep the notation lean, we omit the subscript $z$ in the sequel, i.e.\ we write $\Pp^k$ and $\E^k$ instead of $\Pp^k_z$ and $\E^k_z$, respectively.
We want  to show that $\{\Pp^k\}_{k \in \N}$ is tight.

Fix an arbitrary $f \in \domgen{\gen}$.
We use $\Pp^k_f$ to denote the distribution of the paths of $f(X,Y)$ (with corresponding expectation $\E_{f}^{k}$) and let $(\xi_t)_{t \geq 0}$ be the coordinate process of real-valued càdlàg functions, where the $\sigma$-algebra $\calF^{k,\xi}$ is the $\Pp^k_f$-completion of $\smash{\sigma(\xi_t  :  t \geq 0)}$. We denote the corresponding filtration with $\smash{\{\calF^{k,\xi}\}_t}$.
Since $\domgen{\gen}$ is dense in $\Cinf{\stateXY}$, we know from \cite[Theorem~3.9.1]{bib:ethier-kurtz-86} that tightness of $(\Pp^k)_{k \in \N}$ is equivalent to tightness of $(\Pp_f^k)_{k \in \N}$.
We introduce the following function,
\[
g\,:\,\Omega \to D_\R \, ; \, \omega \mapsto f \circ \omega.
\]
Let $\epsilon \in (0, 1]$.
Set $q(e, d) := |e-d| \wedge 1$, for any $e, d \in \R$.
Then, for every $\smash{A \in \calF^{k,\xi}_t, 0 \leq t < \infty}$ and $0 \leq h \leq \epsilon$ we have
\begin{align}
    \EVM{f}{k}{\indA{A} q(\xi_{t+h}, \xi_t)^2} &\leq \int_A (\xi_{t+h} - \xi_t)^2 d\Pp^k_f \notag \\
    &= \int_{g^{-1}(A)}(f(X_{t+h}, Y_{t+h}) - f(X_t, Y_t))^2 d \Pp^k \notag \\
    \label{eqn:MPexist1}
    &= \int_{g^{-1}(A)} \EVM{}{k}{(f(X_{t+h}, Y_{t+h}) - f(X_t, Y_t))^2 \mid \calF_t} d \Pp^k.
\end{align}
Since $\Pp^k \in \MP{\gen_k, \domgen{\gen_k}}$ and $f, f^2 \in \domgen{\gen_k}$ by Lemma~\ref{lemma:dom_closed_square}, we get
\begin{align*}
    &\EVM{}{k}{(f(X_{t+h}, Y_{t+h}) - f(X_t, Y_t))^2 \mid \calF_t} \\
    & \quad =  \EVM{}{k}{f^2(X_{t+h}, Y_{t+h}) - f^2(X_t, Y_t) \mid \calF_t}  \\
    &\qquad \qquad -2 f(X_t, Y_t)\EVM{}{k}{f(X_{t+h}, Y_{t+h}) - f(X_t, Y_t) \mid \calF_t} \\
    &\quad =\EVM{}{k}{\int_t^{t+h}\gen_k f^2(X_{s}, Y_{s}) ds \mid \calF_t}  -2 f(X_t, Y_t)\EVM{}{k}{\int_t^{t+h} \gen_k f(X_{s}, Y_{s})ds \mid \calF_t}
\end{align*}
Next, we handle the integrands.
For that recall that the specific form of $\gen_k f(x, y)$ equals
\begin{align}
    &\genX_k f(x, y) + \sum_{k,l=1}^n a_{kl}(x) \frac{\partial^2f(x,y)}{\partial y_k \partial y_l} + \innerp{b(x)}{\nabla_y f(x,y)} - c(x) f(x, y) \notag \\
    &\quad + \int_{\stateY\setminus \{0\}} (f(x, y + \zeta) - f(x, y) - \innerp{\nabla_\indexJ f(x, y)}{\chi_\indexJ(\zeta)})m(x, d \zeta) \notag \\
        \label{eqn:existMPaffine1}
    &\quad + \sum_{k,l=1}^n \innerp{\alpha_{\indexI,kl}}{y^+}\frac{\partial^2f(x,y)}{\partial y_k \partial y_l} +  \innerp{\beta y}{\nabla_y f(x,y)} - \innerp{\gamma}{y^+}f(x, y) \\
                \label{eqn:existMPaffine2}
    &\quad + \sum_{i = 1}^m\int_{\stateY\setminus \{0\}} (f(x, y + \zeta) - f(x, y) -
   \innerp{\nabla_{\indexJ(i)} f(x, y)}{\chi_{\indexJ(i)}(\zeta)})y_i^+ \mu_i(d \zeta). % &\qquad \qquad \qquad
\end{align}
We compactly write the expressions~\eqref{eqn:existMPaffine1}~to~\eqref{eqn:existMPaffine2} as $\Tilde{\gen}^Y f(x, y)$ and note that $\Tilde{\gen}^Y$ is the generator of an affine semigroup with admissible parameters $\smash{(0, \alpha, 0, \beta, 0, \gamma, 0, \mu)}$.
Thus, by Lemma~8.1 of \cite{Duffie2003AffineFinance} we know that for $(x, y) \in \stateXY$ there exists a constant $C$ such that
\begin{equation*}
    \Tilde{\gen}^Y f(x, y) \leq C \left(\norm{\alpha} + \norm{\beta} + \norm{\gamma} + \sum_{i \in \indexI} \mathcal{M}_i \right) \norm{f(x, y + \cdot)}_{\sharp; \stateY},
\end{equation*}
and since $f \in \Cinf{\stateXY}$ this in particular implies that there is a constant $C^Y_f$ such that
\begin{equation}
    \label{eqn:existMPbound}
    \Tilde{\gen}^Y f(x, y) \leq C_f^Y,
\end{equation}
with the constant $C^Y_f$ depending on $f$, but not on $k$.
Furthermore, with $\genX_k$ being a bounded operator, there exists a constant $C_{k,f}^X$ (depending both on $f$ and $k$) such that
\begin{equation}
    \label{eqn:existMPbound2}
     \genX_k f(x, y) \leq C_{k,f}^X.
\end{equation}
For the remaining terms we yet again apply Lemma~8.1 of \cite{Duffie2003AffineFinance} and exploit the structure of $\domgen{\gen_k}$ to arrive at an inequality of the form
\begin{align}
    &\gen_kf(x, y) - (\genX_k f(x, y) + \Tilde{\gen}^Y f(x, y)) \\
    &\qquad \leq C \left(\norm{(\norm{a_{kl}}_\infty)_{k,l = 1}^n} + \norm{(\norm{b_{k}}_\infty)_{k = 1}^n} + \norm{c}_\infty + \norm{M}_\infty \right) \norm{f(x, y + \cdot)}_{\sharp; \stateY} \notag\\
    \label{eqn:existMPbound3}
    &\qquad \leq C_f^{Y|X},
\end{align}
for a constant $C_f^{Y|X}$ independent of $k$.
Similar considerations also apply for $\gen_k f^2$ and so the inequalities~\eqref{eqn:existMPbound} to~\eqref{eqn:existMPbound3} imply
\begin{align}
    &\EVM{}{k}{(f(X_{t+h}, Y_{t+h}) - f(X_t, Y_t))^2 \mid \calF_t} \notag\\
    &\quad \leq \epsilon (C_{k,f^2}^X + C^{Y|X}_{f^2} + C^Y_{f^2}) + 2 \epsilon
        \label{eqn:existMPbound_final}\norm{f}_\infty ( C_{k,f}^X + C^{Y|X}_{f} + C^Y_{f}) \quad \Pp^k\text{-a.s.}
\end{align}
It remains to consider the behavior of the constant $C_{k,f}^X$ when taking the supremum over $k$.
However, for a Yosida approximation it holds that $\genX_k f$ converges uniformly to $\genX f$, which in turn implies the finiteness of $\sup_k \norm{\genX_k f}_\infty$ (uniformly convergent bounded functions are uniformly bounded).
Consequently, we can find a constant independent of $k$ and we have thus shown that $\lim_{t \to 0}\sup_{k \in \N}\EVM{f}{k}{q(\xi_0, \xi_t)^2} = 0$.
By defining $\gamma_\epsilon$ as the right-hand side of~\eqref{eqn:existMPbound_final}, we can regard $(\gamma_\epsilon)_{\epsilon \in (0, 1]}$ as a family of functions $\gamma_\epsilon \, : \, D_\R \to [0, \infty)$ with the following properties
\begin{align*}
    &\sup_{k \in \N} \EVM{f}{k}{\gamma_\epsilon} \, \to \, 0 \quad \text{as } \epsilon \, \to \, 0, &\\
    &\EVM{f}{k}{q(\xi_{t+ h}, \xi_t)^2 \, q(\xi_t, \xi_{t-v})^2 \mid \calF^{k,\xi}_t} \leq \EVM{f}{k}{\gamma_\epsilon \mid \calF^{k,\xi}_t} \quad \Pp_f^k\text{-a.s.}&
\end{align*}
for all $0 \leq t \leq T$, $0 \leq h \leq \epsilon \leq 1$ and $0 \leq v \leq \epsilon \wedge t$.
We work on $D_{\stateXY^\Delta}$, so that the compact containment condition is trivially fulfilled.
Hence, by Theorem~3.8.6 of \cite{bib:ethier-kurtz-86} we conclude that $(\Pp_f^k)_{k \in \N}$ is tight for all $f \in \domgen{\gen}$.
By Theorem~3.9.1 of \cite{bib:ethier-kurtz-86} together with Lemma~\ref{lemma:domdenseCinf} this implies that $(\Pp^k)_{k \in \N}$ is tight.

\paragraph*{Step 2:\ Limit} Since $(\Omega, \topology)$ is a Polish space, by Prokhorov's Theorem the set $(\Pp^k)_{k \in \N} \subset \probset{\Omega}$ is relatively compact.
So, there exists an accumulation point of $\{\Pp^k\}$, which we denote by $\Pp$ (with corresponding expectation $\E$).
In the sequel we show that $\Pp \in \MP{\gen, \domgen{\gen}}$, which is equivalent to showing that for all $f \in \domgen{\gen}$ it holds that
\begin{equation}
    \label{eqn:MPequivalence}
    \EV{\left(f(X_{t_2}, Y_{t_2}) - f(X_{t_1}, Y_{t_1}) - \int_{t_1}^{t_2} \gen f(X_s, Y_s) ds  \right) \, \prod_{l = 1}^m h_l(X_{s_l}, Y_{s_l}) } = 0,
\end{equation}
for all $0 \leq s_1 < \dots  s_m \leq t_1 \leq t_2$ and all $h_1, \dots, h_m \in \Bb{\stateXY}$.
By Lemma~3.7.7 of \cite{bib:ethier-kurtz-86} the following dense subset of $[0, \infty)$,
$$T_\Pp := \{t \geq 0 \mid \Pp((X_{t-}, Y_{t-}) = (X_t, Y_t)) = 1\},$$
has an at most countable complement in $[0, \infty)$.
We pick a subsequence $\{\Pp^{k(j)}\}$ for which \smash{$\lim_{j \to \infty} \Pp^{k(j)} = \Pp$}.
Then, by \cite[Theorem~3.7.8]{bib:ethier-kurtz-86} the finite dimensional distributions of $((X_{t_1}, Y_{t_1}),\allowbreak \dots, (X_{t_m}, Y_{t_m}))$ under $\Pp^{k(j)}$ converge weakly to the corresponding distributions under $\Pp$ for $t_1,  \dots, t_m \in T_\Pp$.
Thus, it is enough to show~\eqref{eqn:MPequivalence} for $s_1, \dots, s_m, t_1, t_2 \in T_\Pp$.
Since $\Pp^{k(j)} \in \MP{\gen_{k(j)}, \domgen{\gen}}$, it is left to prove that
\begin{align}
\label{eqn:MP_convergence}
    &\lim_{j \to \infty} \EVM{}{k(j)}{\int^{t_2}_{t_1} \gen_{k(j)} f(X_s, Y_s) ds \, \prod_{l=1}^m h_l(X_{s_l}, Y_{s_l}) } \\
    &\qquad \qquad \qquad \qquad = \EV{\int^{t_2}_{t_1} \gen f(X_s, Y_s) ds \, \prod_{l=1}^m h_l(X_{s_l}, Y_{s_l})} \notag
\end{align}
Given the uniform convergence of $\{\gen_{k(j)} f\}$ we have
\begin{equation}
\label{eqn:MP_convergence1}
    \lim_{j \to \infty} \EVM{}{k(j)}{(\gen_{k(j)} f(X_s, Y_s) - \gen f(X_s, Y_s) ) \, \prod_{l = 1}^m h_l(X_{s_m}, Y_{s_m}) } = 0,
\end{equation}
for all $s \in T_\Pp$.
The weak convergence of $\{\Pp^{k(j)}\}$ further implies that
\begin{equation}
\label{eqn:MP_convergence2}
    \lim_{j \to \infty} \EVM{}{k(j)}{\gen f(X_s, Y_s) \, \prod_{l = 1}^m h_l(X_{s_l}, Y_{s_l}) } = \EV{\gen f(X_s, Y_s) \, \prod_{l = 1}^m h_l(X_{s_l}, Y_{s_l}) },
\end{equation}
and so Equations~\eqref{eqn:MP_convergence1} and~\eqref{eqn:MP_convergence2} prompt
\[
     \lim_{j \to \infty} \EVM{}{k(j)}{\gen_{k(j)} f(X_s, Y_s) \, \prod_{l = 1}^m h_l(X_{s_l}, Y_{s_l}) } = \EV{\gen f(X_s, Y_s) \, \prod_{l = 1}^m h_l(X_{s_l}, Y_{s_l}) },
\]
Finally, we interchange order of integration in~\eqref{eqn:MP_convergence} to arrive at the desired result.
\end{proof}

In Sections~\ref{sec:fourier} and~\ref{sec:further-properties}, we frequently resort to the following useful extension of Theorem~\ref{thm:existence}.
Recall that we introduced $\genY$ as an operator on $\Ckb{2}{\stateY}$.
The proof is postponed to Appendix~\ref{appendix:proof}.
\begin{corollary}\label{cor:afterMPresult}
For an arbitrary $z \in \stateXY$, choose a $\Pp_z \in \MP{\gen, \domgen{\gen}}$.
Let $f \in C([0, \infty) \times \stateX)$, which is $C^1$ in its first argument and $f(t, \cdot) \in \domgen{\weakgenX}$ for all $t \geq 0$.
Moreover, consider $g \in \Ck{1,2}{[0, \infty) \times \stateY}$, which is bounded in its second argument.
Then, the process
\[
f(t, X_t)g(t, Y_t) - f(0, x)g(0,y) - \int_0^t (\partial_t + \weakgenX + \genY)f(s, X_s)g(s, Y_s)ds, \quad t \geq 0,
\]
is a martingale under $\Pp_z$.
\end{corollary}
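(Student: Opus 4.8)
The plan is to establish the martingale property for the product $f(t,X_t)g(t,Y_t)$ by first treating a dense, tractable subclass of test functions and then passing to the general case by approximation. The key obstacle is that $f(t,\cdot)$ only lies in $\domgen{\weakgenX}$ (the weak generator on $\Cb{\stateX}$ associated with the $C_b$-Feller extension from Remark~\ref{remark:genX}), not in $\domgen{\genX} \subset \Cinf{\stateX}$, and $g$ need not vanish at infinity; hence $f(t,\cdot)g(t,\cdot)$ need not lie in $\domgen{\gen}$, and we cannot simply invoke the definition of $\Pp_z \in \MP{\gen,\domgen{\gen}}$.

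\textbf{Reduction to a product structure.} First I would verify the time-homogeneous, product case, namely that for $\tilde f \in \domgen{\weakgenX}$ and $\tilde g \in \Ckb{2}{\stateY}$ the process $\tilde f(X_t)\tilde g(Y_t) - \tilde f(x)\tilde g(y) - \int_0^t (\weakgenX + \genY)(\tilde f \tilde g)(X_s,Y_s)\,ds$ is a $\Pp_z$-martingale, where $(\weakgenX+\genY)(\tilde f\tilde g)(x,y) = \tilde g(y)\weakgenX \tilde f(x) + \tilde f(x)\genY \tilde g(y)$ by the product structure of $\gen$. The natural route is a bootstrap: the statement holds for $f \in \domgen{\gen}$ of product form $\tilde f\tilde g$ with $\tilde f \in \Ckc{2}{\stateX}$, $\tilde g \in \Theta_0$ (as in Lemma~\ref{lemma:domdenseCinf}) directly from the definition of the martingale problem. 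To extend from $\tilde f \in \domgen{\genX}$ to $\tilde f \in \domgen{\weakgenX}$, I would use that $\domgen{\genX}$ is $\norm{\cdot}_{\genX}$-dense, approximate $\tilde f$ by $\tilde f_n \in \Ckc{2}{\stateX}$ with $\tilde f_n \to \tilde f$ and $\genX \tilde f_n \to \weakgenX \tilde f$ in the appropriate bounded-pointwise sense, and pass to the limit in the martingale relation using dominated convergence (the boundedness of $\tilde g$ and of the approximating sequences, together with the $C_b$-Feller property, controls the integrands). Enlarging the second factor from $\Theta_0$ to all bounded $\Ckb{2}{\stateY}$ proceeds by a similar bounded-pointwise approximation argument, invoking the bounds on $\genY\tilde g$ provided by Lemma~8.1 of \cite{Duffie2003AffineFinance} via the $\norm{\cdot}_{\sharp;\stateY}$-seminorm, exactly as in the tightness estimates of Theorem~\ref{thm:MP}.

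\textbf{Adding time-dependence.} Given the homogeneous product result, I would recover the time-dependent statement by a standard space-time argument: augment the state by a deterministic clock and apply the integration-by-parts (product) formula for the process $t \mapsto f(t,X_t)g(t,Y_t)$. Concretely, since $f$ is $C^1$ and $g$ is $C^{1,2}$ in $(t,y)$, I would write the increment $f(t,X_t)g(t,Y_t) - f(0,x)g(0,y)$ by freezing the time argument on a fine partition, using the homogeneous martingale property on each subinterval for the frozen functions $f(t_i,\cdot)$ and $g(t_i,\cdot)$, and collecting the correction terms $\partial_t f$ and $\partial_t g$ via the $C^1$-regularity in time. In the limit of vanishing mesh the drift assembles into $\int_0^t (\partial_t + \weakgenX + \genY)f(s,X_s)g(s,Y_s)\,ds$, where $(\partial_t + \weakgenX + \genY)(fg) = g\,\partial_t f + f\,\partial_t g + g\,\weakgenX f + f\,\genY g$.

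\textbf{Main obstacle.} The hardest point is the justification of the limit passages in the non-$\Cinf{}$ setting: establishing the martingale property for $\tilde f \in \domgen{\weakgenX}$ requires that the bounded-pointwise convergence $\genX\tilde f_n \to \weakgenX\tilde f$ interacts correctly with the expectation and the time integral, which hinges on the weak-closedness of $\weakgenX$ and on a uniform integrability bound for the integrands $\genY\tilde g(Y_s)$ that is valid under every $\Pp_z \in \MP{\gen,\domgen{\gen}}$ (not merely the one constructed as a limit in Theorem~\ref{thm:MP}). I expect this to be handled by combining the a-priori moment control implicit in the affine structure (the $\norm{\cdot}_{\sharp;\stateY}$-estimates) with dominated convergence, and I would flag that the boundedness of $g$ in its second argument is precisely the hypothesis that makes these dominating bounds available.
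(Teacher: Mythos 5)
Your proposal follows essentially the same route as the paper's proof in Appendix~\ref{appendix:proof}: first establish the time-independent product case by approximating the $X$-factor from the core $\Ckc{2}{\stateX}$ (using the semigroup identity to get bounded-pointwise convergence $\genX \tilde f_k \to \weakgenX \tilde f$) and the $Y$-factor by compactly supported cutoffs with $\bplim (g_k, \genY g_k) = (g, \genY g)$ (the paper's Lemma~\ref{lemma:bpconv_genY}), then pass to the limit by dominated convergence, and finally handle time-dependence via the fundamental theorem of calculus together with the freezing argument that is formalized in Lemma~4.3.4 of \cite{bib:ethier-kurtz-86} --- exactly the partition argument you sketch. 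The steps you defer (the explicit cutoff estimates for $\genY g_k$ and the domination of the integrands) are precisely what the paper's appendix lemma supplies, so your plan is correct and not a genuinely different argument.
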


\section{Transform Formula and Existence}\label{sec:fourier}

The popularity of affine processes largely stems from  the fact that the Fourier transform of their marginal distributions is available in semi-explicit form (up to the solution of a system of ODEs). 
In particular, in the context of affine models many pricing problems in mathematical finance can be solved efficiently by Fourier methods. 
In this section we show that the characteristic function of MMAPs has  a fairly simple form as well, which makes these processes an appealing tool for many modelling tasks. 
On the theoretical side,  our results on the characteristic function of MMAPs permit us to establish the \emph{uniqueness} of the martingale problem associated with the generator $\gen$ from~\eqref{eqn:defGenXY} and hence the \emph{existence} of an MMAP.

Consider some $u \in \C^n$ and the function
\begin{equation}
    \label{eqn:exp_function}
    \stateY \to \C \, : \, y \mapsto e^{\innerp{u}{y}}.
\end{equation}
The function~\eqref{eqn:exp_function} is an element of $\Cb{\stateY}$ if and only if $u$  belongs to the set
\begin{equation*}
    \U := \C^m_{-} \times i\R^{m - n}.
\end{equation*}
Below we discuss expectations of the form $\EVM{z}{}{e^{\innerp{u}{Y_t}}}$ for $u \in \U$ and any $z \in \stateXY, t \geq 0$; this includes the characteristic function of $Y_t$ as a special case.

For an  affine semigroup $(P^{\text{aff}}_t)$, it holds that $P^{\text{aff}}_t e^{\innerp{u}{y}} = \exp(\phi(t, u) + \innerp{\psi(t, u)}{y})$ for $u \in \U$ and deterministic functions $\phi$ and $\psi$. 
The derivatives of $\phi(\cdot, u)$ and $\psi(\cdot, u)$ at $t=0$ are typically denoted by $F(u)$ and $R(u)$, respectively (their existence is shown in~\cite{Keller-Ressel2011AffineRegular}). 
In what follows we introduce  the Markov-modulated analogue of these derivatives.
Consider functions $F : \stateX \times \U \to \C$ and $R : \U \to \C^n$ given by
\begin{align}
        \label{eqn:Riccati_F}
        \begin{split}
                F(x, u) &= \innerp{b(x)}{u} + \innerp{a(x) u}{u} - c(x) \\
    &\qquad + \int_{\stateY \backslash \{0\}} (e^{\innerp{u}{\zeta}} - 1 - \innerp{u_\indexJ}{\chi_\indexJ(\zeta)})m(x, d\zeta),
        \end{split}  \\
        \label{eqn:Riccati_Ri}
    R_{i} (u) &= \innerp{\alpha_i u}{u} + \innerp{\beta^+_i}{u} - \gamma_i + \int_{\stateY \backslash \{0\}} (e^{\innerp{u}{\zeta}} - 1 - \innerp{u_{\indexJ(i)}}{\chi_{\indexJ(i)}(\zeta)}) \mu_i(d \zeta), \\
        \label{eqn:Riccati_RJ}
    R_\indexJ (u)&= \beta^* u^*.
\end{align}
for $i \in \indexI$ and where
\begin{align*}
        % \label{riccati4}
    \beta^+_i &:= (\beta^\transpose)_{i \{1, \dots, n\}} \in \R^n, \quad i \in \indexI, \\
    \beta^* &:= (\beta^\transpose)_{\indexJ\indexJ} \in \R^{(n - m) \times (n - m)}.
\end{align*}
Note  that $R$ is the same function as in the standard affine case, whereas $F$ is modified (it depends  on $x$). Moreover, $F$ and $R$ clearly separate the $x$-admissible parameters according to whether they depend on $x$ or not.
In the standard affine case, $\psi$ is characterized by a system of ODEs, called generalized Riccati equations, and then $\Phi(t,u) = \exp(\phi(t,u))$ is simply given via the linear ODE $\partial_t \Phi(t, u) = \Phi(t, u) F(\psi(t, u))$.  
It will turn out that for an MMAP  the transform $\EVM{z}{}{e^{\innerp{u}{Y_t}}}$ has a similar structure, but the function $\Phi(t,u)$ is replaced by a function ${\varphi}(t, x; u)$ that is closely connected to a Cauchy problem involving the function $F$ and the generator of $X$.

Consider a solution $\psi$ of the generalized Riccati equations mentioned before, that is a function $\psi : [0, \infty) \times \U \to \C^n$ solving the ODE system
\begin{align}
    \label{eqn:riccati}
    \partial_t \psi (t, u) &= R (\psi (t, u)) , \quad \psi (0, u) = u.% \\
    % \label{riccati2}
    % \psi_\indexJ(t, u) &= e^{\beta^* t} u^*,
\end{align}
The system~\eqref{eqn:riccati} is a well-studied mathematical object and \cite[Section~6]{Duffie2003AffineFinance} prove the existence of a unique solution.
Moreover, Equation~\eqref{eqn:riccati} with \eqref{eqn:Riccati_RJ} obviously implies that $\psi_\indexJ(t, u) = e^{\beta^* t} u^*$.
Within our framework, the counterpart of the linear ODE for $\Phi$ is the following  Cauchy problem for a generic operator $\gengenX$, which  will be either equal to $\genX$ or to $\weakgenX$,
\begin{equation}
    \label{eqn:cauchy_prime}
    \begin{cases}
    &\partial_t {\varphi^\star}(t, x; u) = \gengenX{\varphi^\star}(t, x; u) + {\varphi^\star}(t,x;u) F(x, \psi(t, u)), \,\, t \in [0, T], \, x \in \stateX, \\ %T > 0, \\
    &\varphi^\star(0, x; u) = f(x), \quad f \in \domgen{\gengenX}, 
    \end{cases} \tag{$\text{CP}_{T, u}$}
\end{equation}
where $T > 0$ and $u \in \U$ are to be understood as exogenous parameters. 
We recall the notion of an important solution concept in the following
\begin{definition}\label{def:solutionsCauchy}
A \emph{classical solution} of \eqref{eqn:cauchy_prime} is a $C^1$-function $[0,T] \to \domgen{\gengenX} : t \mapsto {\varphi^\star}(t, \cdot; u)$  satisfying \eqref{eqn:cauchy_prime}.
\end{definition}

To motivate  our results on the form of the function $ \EVM{z}{}{e^{\innerp{u}{Y_{t}}}}$ for MMAPs we begin with the case where there exists a solution $\varphi^\star$ to~\eqref{eqn:cauchy_prime} with $\gengenX = \weakgenX$ and initial condition $f\equiv 1$.
% the following variation of \eqref{eqn:cauchy_prime}, which involves $\weakgenX$ instead of $\genX$,
% \begin{equation}
%     \label{eqn:motivateCauchy_with_1}
%      \partial_t f_u(t,x)= \weakgenX f_u(t, x) + f_u(t,x) F(x, \psi(t, u)), \quad f_u(0, \cdot) \equiv 1.
% \end{equation}
Fix arbitrary $u \in \U,\, y \in \stateY, \, t^\star \in [0, T]$, and define the function $G_u \colon [0,T] \times \stateXY \to \C $ by $G_u(t,x,y) = \varphi^\star(t, x; u) e^{\innerp{\psi(t, u)}{y}}$.  
We want to use Corollary~\ref{cor:afterMPresult} to show that
\begin{equation}
\label{eqn:char_proof1a}
M_t = G_u(t^\star -t, X_t, Y_t), \quad t \in [0, t^\star],
\end{equation}
is a complex-valued martingale. 
Note first that
\begin{equation}\label{eqn:char_proof1c}
    \genY G_u(t, y) =   G_u(t, x, y) \left( F(x, \psi(t, u)) + \innerp{R(\psi(t, u))}{y} \right).
\end{equation}
% Moreover, using the ODE \eqref{eqn:riccati} we have
% \begin{equation}
% \partial_t g_u(t,u) = \innerp{\partial t \psi(t, u)}{y} g_u(t,y) = \innerp{R(\psi(t, u))}{y} g_u(t,y)\,.
% \end{equation}
Using that  $\varphi^\star$ solves Cauchy problem \eqref{eqn:cauchy_prime} with $\gengenX = \weakgenX$ and the generalized Riccati equation~\eqref{eqn:riccati} bring us to
\begin{equation}\label{eqn:PDEcombGu}
    \begin{aligned}
        (\weakgenX &+ \genY- \partial_t) G_u(t, x, y) \\&= e^{\innerp{\psi(t, u)}{y}}(\weakgenX - \partial_t) \varphi^\star(t, x; u)  - \innerp{\partial_t \psi(t, u)}{y} G_u(t, x, y) + \genY G_u(t, x, y)  \\
    &= -G_u(t, x, y)(F(x, \psi(t, u)) - \innerp{R(\psi(t, u))}{y})  + \genY G_u(t, x, y).
    \end{aligned}
\end{equation}
Together with Equation~\eqref{eqn:char_proof1c}, we consequently get
\[
(\partial_t + \weakgenX + \genY) G_u(t^\star - \cdot, \cdot) \equiv 0.
\]
% \begin{align*}
%     (\partial_t + \weakgenX &+ \genY) G_u(T - t, x, y)\\ &= e^{\innerp{\psi(t, u)}{y}}(\genX - \partial_t) \varphi^{k}(t, x)
%     - \innerp{\partial_t \psi(t, u)}{y} g^{k}_u(t, x, y) + \genY g^{k}_u(t, x, y)  \\
%     &= -g^{k}_u(t, x, y)(F^k_u(x, t) + \innerp{R(\psi(t, u))}{y}) + \genY g^{k}_u(t, x, y).
% \end{align*}
% Using that  $[0, T] \ni t \mapsto f_u(t, y)$ solves the Cauchy problem \eqref{eqn:cauchy_prime} and the generalized Riccati equation~\eqref{eqn:riccati} we thus get, omitting the arguments of $f_u$ and $g_u$,
% \begin{equation}\label{eq:PDE_Gu}
% \begin{split}
% (\partial_t & + \weakgenX + \genY) G_u(T - t, x, y) =   - \left ( g_u \partial _t f_u + f_u \partial_t g_u \right) + g_u \weakgenX f_u + f_u  \genY g_u \\
%  &= - \left ( g_u \big ( \widetilde \genX f_u  +  f_u  F(x, \psi(T-t, u))   + f_u g_u \innerp{R(\psi(T-t, u))}{y}   \big ) \right ) \\ &
%     +  g_u \weakgenX f_u      + f_u g_u  \left( F(x, \psi(T-t, u)) + \innerp{R(\psi(T -t, u))}{y} \right),
% \end{split}
% \end{equation}
% which is obviously equal to zero. 
Thus, $M$  is a martingale by Corollary~\ref{cor:afterMPresult}. 
Using that $G_u(0,x,y) = e^{\innerp{u}{y}} $ we get that
\begin{equation}\label{eq:char-func-regular}
\EVM{z} {}{e^{\innerp{u}{Y_{t^\star}}}} =  \EVM{z} {} {M_{t^\star}} = M_0 = \varphi^\star(t^\star, x; u) e^{\innerp{\psi(t^\star, u)}{y}},
\end{equation}
that is we have identified the characteristic function of $Y_{t^\star}$.
It is well known that regularity assumptions on the function $F(\cdot,u)$ (in particular continuity) are needed to ensure that the Cauchy problem \eqref{eqn:cauchy_prime} has a classical solution; see for instance~\cite{Pazy1983SemigroupsEquations} or~\cite{Glau2016ARates}. 
By the Feynman-Kac Theorem (see e.g.\ \cite[Theorem~17.4.10]{cohen2015stochastic}) we have for $u \in \U$ the representation
\begin{equation}
\varphi^\star(t, x; u) =  \varphi(t,x;u) :=  \EVM{(x,y)}{}{\exp\left(\int_0^t F(X_s, \psi(t - s, u)) ds\right)}.
\end{equation}
Note that the function $\varphi(\cdot;u)$ is well-defined, even if~\eqref{eqn:cauchy_prime} does not admit a classical solution.
In Theorem~\ref{prop:fourier} below we use  approximation arguments to show that for general $x$-admissible parameters the characteristic function of $Y_{t^\star}$ is equal to
\[
 \varphi(t^\star, x; u) e^{\innerp{\psi(t^\star, u)}{y}}.
\]
We begin with an important boundedness result.
Its proof is postponed to the appendix.

\begin{lemma}\label{lemma:boundednessF}
Consider the function $F$ defined in \eqref{eqn:Riccati_F}.
Then for any compact subset $\mathcal{V} \subset \U$ it holds that
\[
\sup_{(x, u) \in \stateX \times \mathcal{V}} |F(x,u)| < \infty.
\]
\end{lemma}

Consider a measure $\Pp_z \in \MP{\gen, \domgen{\gen}}$. 
In the next result we derive the characteristic function of the one-dimensional distributions of the process $Y$ under $\Pp_z$.

\begin{theorem}\label{prop:fourier}
Let $z \in \stateXY$, $t^\star \geq 0$ and $u \in \U$ be arbitrary and consider $\Pp_z \in \MP{\gen, \allowbreak \domgen{\gen}}$.
%Suppose that for all $f \in \domgen{\genX}$ there exists a classical solution of Cauchy problem~\eqref{eqn:cauchy_prime}.
% Suppose
% \begin{enumerate}[label=(\roman*)]%[label=(\alph*)]
% \item\label{item:fourcond1} either that the parameters $(a, \alpha, b, \beta, \allowbreak c, \gamma, m, \mu)$ underlying $(\gen, \domgen{\gen})$ are strongly $x$-admissible,
% \item\label{item:fourcond2} or that for all $f \in \domgen{\genX}$ there exists a classical solution of Cauchy problem~\eqref{eqn:cauchy_prime}.
% \end{enumerate}
Then it holds that
\begin{equation}
    \label{eqn:fourier}
    \EVM{z}{}{e^{\innerp{u}{Y_{t^\star}}}} = \varphi(t^\star, x; u) e^{\innerp{\psi(t^\star, u)}{y}}, %, \quad \forall z = (x, y) \in \stateXY.
\end{equation}
where $\psi(\cdot, u) : [0, \infty) \to \C^n$ solves the system of generalized Riccati equations~\eqref{eqn:riccati} and where $\varphi(t, x; u) = \EVM{z}{}{\exp(\int_0^t F(X_s, \psi(t - s, u)) ds)}, \, t \in [0, t^\star]$ (the value of the expectation is invariant with respect to $y$).
\end{theorem}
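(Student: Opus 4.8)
Here is the proof plan.

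\medskip

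The plan is to fix $f \in \domgen{\genX}$, write $\varphi = \varphi_f$ for the classical solution of \eqref{eqn:cauchy_prime} supplied by the hypothesis, and to prove that the space--time process
\[
M_s := \varphi(t^\star - s, X_s; u)\, e^{\innerp{\psi(t^\star - s, u)}{Y_s}}, \qquad s \in [0, t^\star],
\]
is a martingale under $\Pp_z$. Reading off $\EVM{z}{}{M_{t^\star}} = M_0$ and then letting $f \uparrow 1$ will give \eqref{eqn:fourier}. For the martingale property I would invoke Corollary~\ref{cor:afterMPresult} with $\tilde f(s, x) := \varphi(t^\star - s, x; u)$ and $g(s, y) := e^{\innerp{\psi(t^\star - s, u)}{y}}$, extended in a $C^1$ fashion to $s > t^\star$ (only the values on $[0, t^\star]$ enter) and applied to real and imaginary parts to accommodate complex-valued data. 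Its hypotheses hold: $\tilde f(s, \cdot) \in \domgen{\genX} \subset \domgen{\weakgenX}$ is $C^1$ in $s$ since $\varphi$ is a classical solution, while $g \in \Ck{1,2}{[0,\infty) \times \stateY}$ is bounded in $y$ because $\psi(t, u) \in \U$ for all $t$ whenever $u \in \U$ (invariance of $\U$ under the Riccati flow \eqref{eqn:riccati}), whence $|g| \le 1$.

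The heart of the argument is the affine identity
\[
\genY e^{\innerp{v}{\cdot}}(y) = \bigl( F(x, v) + \innerp{R(v)}{y} \bigr)\, e^{\innerp{v}{y}},
\]
which follows by substituting $y \mapsto e^{\innerp{v}{y}}$ into \eqref{eqn:defGenY} and comparing coefficients with \eqref{eqn:Riccati_F}--\eqref{eqn:Riccati_RJ}; the admissibility requirement $\beta_{\indexI\indexJ} = 0$ is precisely what reduces the coefficient of $y^*$ to $R_\indexJ(v) = \beta^* v^*$. Granting it, the space--time drift of $M$ splits as $(\partial_s \tilde f + \weakgenX \tilde f)\, g + \tilde f\, (\partial_s g + \genY g)$. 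The Cauchy problem \eqref{eqn:cauchy_prime} together with $\weakgenX|_{\domgen{\genX}} = \genX$ gives $\partial_s \tilde f + \weakgenX \tilde f = -\tilde f\, F(x, \psi(t^\star - s, u))$; the Riccati equation \eqref{eqn:riccati} gives $\partial_s g = -\innerp{R(\psi(t^\star - s, u))}{y}\, g$; and the affine identity gives $\genY g = \bigl( F(x, \psi(t^\star - s, u)) + \innerp{R(\psi(t^\star - s, u))}{y} \bigr) g$. The four contributions cancel in pairs, so the drift vanishes; as $\varphi(t^\star - s, \cdot; u)$ is bounded and $|g| \le 1$, the process $M$ is bounded and therefore a true martingale.

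From $\EVM{z}{}{M_{t^\star}} = M_0$, together with $\varphi(0, \cdot; u) = f$ and $\psi(0, u) = u$, I obtain
\begin{equation*}
\EVM{z}{}{f(X_{t^\star})\, e^{\innerp{u}{Y_{t^\star}}}} = \varphi_f(t^\star, x; u)\, e^{\innerp{\psi(t^\star, u)}{y}}. \tag{$\ast$}
\end{equation*}
Independently, because \eqref{eqn:cauchy_prime} is a \emph{linear} Cauchy problem driven only by the modulating coordinate, its classical solution admits the Feynman--Kac representation
\[
\varphi_f(t^\star, x; u) = \E_x\Bigl[\, f(X_{t^\star}) \exp\Bigl( \int_0^{t^\star} F(X_s, \psi(t^\star - s, u))\, ds \Bigr) \Bigr],
\]
where $\E_x$ denotes the law of the conservative Feller process $X$ of Assumption~\ref{assumption:genX}. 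This is again proved by a drift--cancellation martingale argument, now applied to $s \mapsto \varphi_f(t^\star - s, X_s; u) \exp(\int_0^s F(X_r, \psi(t^\star - r, u))\, dr)$ under the law of $X$; here no zeroth-order (killing) terms interfere, since the test function depends on $x$ only.

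Finally I would approximate $f$ by $f_k \in \Ckc{2}{\stateX} \subset \domgen{\genX}$ with $0 \le f_k \le 1$ and $f_k \uparrow 1$. On the left-hand side of $(\ast)$, dominated convergence, using $|e^{\innerp{u}{Y_{t^\star}}}| \le 1$ and $f_k(\Delta) = 0 = e^{\innerp{u}{\Delta}}$, yields $\EVM{z}{}{e^{\innerp{u}{Y_{t^\star}}}}$. In the Feynman--Kac representation, $x$-admissibility makes $F(\cdot, v)$ bounded uniformly for $v$ in the compact set $\{ \psi(r, u) : r \in [0, t^\star] \}$, so the exponential functional is uniformly bounded and dominated convergence gives $\varphi_{f_k}(t^\star, x; u) \to \E_x[\exp(\int_0^{t^\star} F(X_s, \psi(t^\star - s, u))\, ds)] = \varphi^\star(t^\star, x; u)$, the expectation being taken with respect to the conservative modulating process. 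Passing to the limit on both sides of $(\ast)$ then yields \eqref{eqn:fourier}. I expect the main difficulty to lie not in the (mechanical) drift cancellation but in the limit $f_k \uparrow 1$ and the identification of $\varphi^\star$: one must verify the hypotheses of Corollary~\ref{cor:afterMPresult} --- in particular the boundedness of $g$ via the invariance of $\U$ under \eqref{eqn:riccati} --- and recognise that the killing encoded in $c$ and $\gamma$ is carried by the potential $F$ and by $\psi$, so that the expectation defining $\varphi^\star$ is the one for the conservative process $X$.
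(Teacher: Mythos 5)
Your proposal is correct and follows essentially the same route as the paper: drift cancellation via Corollary~\ref{cor:afterMPresult} using the affine identity $\genY e^{\innerp{v}{\cdot}}(y) = (F(x,v)+\innerp{R(v)}{y})e^{\innerp{v}{y}}$ and the invariance of $\U$ under the Riccati flow, then a Feynman--Kac identification of $\varphi_f$, and finally a monotone approximation of the constant function $1$ by compactly supported initial data with dominated/monotone convergence on both sides. The only differences are cosmetic (the paper works directly with a sequence $\rho^k \in \Ckc{\infty}{\stateX}$ increasing to $\indA{\stateX}$ rather than first proving the identity for general $f \in \domgen{\genX}$).
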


\paragraph*{Comments} 
The product structure in \eqref{eqn:fourier} is very convenient for computing Fourier-Laplace transforms. 
In a first step one computes the function $\psi(t,u)$ by solving the generalized Riccati equations~\eqref{eqn:riccati}, analogously as in the standard affine case. 
In a second step the function $\varphi$ is either computed using Monte Carlo techniques or, under the added regularity of Proposition~\ref{prop:fourier2}, as a solution of the Cauchy problem~\eqref{eqn:cauchy_prime}. 
The complexity of this step depends on the form of the generator of $X$: if $X$ is a finite state Markov chain, then \eqref{eqn:cauchy_prime} reduces to a set of ODEs; if $X$ is a diffusion process, it is a linear PDE of parabolic type, etc.

\begin{proof}[Proof of Theorem~\ref{prop:fourier}.]  The proof is based on an approximation argument and proceeds in two steps.
\paragraph*{Step 1} Fix an arbitrary $u\in \U, \bar x \in \stateX$. 
In this step we show that there is a sequence $\{F^k_u\}, \; k \in \N$  of functions in  $\Ckb{1}{\stateX \times[0, t^\star]}$ with $\sup_{k \in \N} \norm{F_u^k}_\infty < \infty$ and such that
\begin{equation}\label{eqn:F-approximation}
\lim_{k \to \infty}  {\int_0^{t^\star} \big | F_u^k(X_s, {t^\star}-s) - F(X_s, \psi({t^\star}-s,u)) \big | ds } =0 \quad \Pp_{\bar x} \text{-a.s.}
\end{equation}
Define a measure $\mu_{\bar x}$ on the Borel sets of $\stateX \times [0,{t^\star}]$ by
$
\mu_{\bar x} (A) = \EVM{\bar x}{} {\int_0^{{t^\star}} \indA{A} (X_s,s) ds }\,.
$
We use Lemma~\ref{lemma:boundednessF} to deduce that for  $u \in \U$ the function $F(\cdot,u) \colon (x,t)\mapsto F(x,\psi(t,u))$ is bounded.
Moreover, it follows from \cite[Theorem~1]{Wisniewski1994TheSpaces} that $F(\cdot,u)$ is $\mu_{\bar x}$-a.e.~the pointwise limit of uniformly bounded and {continuous} functions $\widetilde{F}^k_u $ on $(\stateX \times [0, t^\star])$.    
Using standard approximation arguments for continuous functions we may approximate  the functions $\widetilde{F}^k_u $ locally uniformly by  $C^1$ functions, so that there is a sequence  $F^k_u \in  C^1_b (\stateX \times[0, t^\star])$ of uniformly bounded functions such that $F^k_u (\cdot)$ converges to  $F(\cdot,u)$ $\mu_{\bar x}$-a.e.  
By dominated convergence $F^k_u (\cdot)$ converges to  $F(\cdot,u)$ also in $L^1(\mu_{\bar x})$, that is
\begin{equation}\label{eqn:approxFviaFk}
    \lim_{k \to \infty} \EVM{(\bar x, y)}{} {\int_0^{t^\star} \big | F_u^k(X_s, t^\star-s) - F(X_s, \psi(t^\star-s,u)) \big | ds } =0\,.
\end{equation}
By going to a subsequence if necessary we thus get the pointwise convergence~\eqref{eqn:F-approximation}.

\paragraph*{Step 2}Let $\{\rho^k\}_{k \in \N}$ be a sequence of  functions $\rho^k \in  \Ckc{\infty}{\stateX}$, $0 \leq \rho^k \leq 1$ increasing pointwise to $\indA{\stateX}$. 
Consider the Cauchy problem
\begin{equation}
    \label{eqn:cauchy_Fourier}
    \begin{cases}
    &\partial_t {\varphi_u^{k}}(t, x) = \genX {\varphi_u^{k}}(t, x) + {\varphi_u^{k}}(t,x) F^k_u(x, t), \quad t \in [0, t^\star], \, x \in \stateX, \, \\ %T > 0, \\
    &\varphi^{k}_u(0, x) = \rho^k(x).
    \end{cases} \tag{$\text{CP}^{k}_{T, u}$}
\end{equation}
% \begin{equation}
%     \label{eqn:Cauchy_approx}
%      \partial_t \varphi^k_u(t, x) = \genX \varphi^k_u(t, x) + \varphi^k_u(t,x) F_u^k(x,t) , \quad \varphi^k_u(0, \cdot) = \rho^k.
% \end{equation}
Define  a  function 
\[
\bm{f}^k \colon [0, t^\star] \times \Cinf{\stateX} \to \Cinf{\stateX} \colon (t,\bm{\varphi}) \mapsto \bm{\varphi}(\cdot) F^k_u(\cdot,t),
\]
so that \eqref{eqn:cauchy_Fourier} can be written in the form $\partial_t \varphi^k_u(t) = \genX \varphi^k_u(t) + \bm{f}^k (t, \varphi^k_u(t)) $. 
Since $F_u^k \in  \Ckb{1}{\stateX \times[0,t^\star]}$, the mapping $\bm{f}^k$ is continuously Fréchet-differentiable on $ [0, t^\star] \times \Cinf{\stateX}$. 
It follows from \cite[Theorem~6.1.5]{Pazy1983SemigroupsEquations} that there exists a classical solution $\varphi^k_u(t, \cdot)$ to~\eqref{eqn:cauchy_Fourier}.
Moreover, by the Feynman-Kac Theorem (see e.g.\ \cite[Theorem~17.4.10]{cohen2015stochastic}) we have the representation
\[
\varphi^k_u(t, x) = \EVM{(x,y)}{}{\rho^k(X_t) \exp\left(\int_0^t F^k_u(X_s,t^\star - s)ds\right) }, \quad (t, x) \in [0, t^\star] \times \stateX,
\]
where the value of the expectation is invariant with respect to $y$.
We get from \eqref{eqn:F-approximation}, bounded convergence  and the uniform boundedness of $F_u^k$ and $ \rho^k$  that for any $(t, x) \in [0, t^\star] \times \stateX$,
\begin{equation}\label{eq:limit-phiK}
 \lim_{k \to \infty} \varphi^k_u(t, x) = \varphi(t, x; u)\,.
\end{equation}
We now define for each $k \in \N$ the functions $G_u^k(t,x,y) = \varphi_u^k(t,x) e^{\innerp{\psi(t, u)}{y}}$ for $(t, x, y) \in [0, t^\star] \times \stateXY$. 
% The assumptions on $(F^k_u)$ together with dominated convergence imply both $\int_0^t F^k_u(X_s, t - s) ds \to \int_0^t F(X_s, \psi(t - s, u)) ds$ and uniform boundedness of this sequence for any $0 \leq t \leq T$.
% Overall, a version of the continuous mapping theorem suitable for measurable functions (see e.g.\ Theorem~5.27 in \cite{Kallenberg2021FoundationsProbability}) gives us that $\rho^k(X_t) \exp(\int_0^t F^k_u(X_s, t - s) ds)$ converges in distribution to $\exp(\int_0^t F(X_s, \psi(t - s, u)) ds)$ as $k \to \infty$ and uniform integrability gives us $L^1$-convergence of $g^{k}_u(t^\star - t, X_t, Y_t)$.
Note that for any $u \in \U$ it holds that $\psi(t, u) \in \U$ for all $t \geq 0$ (cf.\ \cite[Proposition~6.4]{Duffie2003AffineFinance}), and so the function $[0,t^\star] \times \stateY \ni (t, y) \mapsto e^{\innerp{\psi(t, u)}{y}}$ is bounded.
Thus, we have $L^1$-convergence of $G^{k}_u(t^\star - t, X_t, Y_t)$ to $M^\star_t := \varphi(t^\star - t, X_t; u) e^{\innerp{\psi(t^\star - t, u)}{Y_t}}$ as $k \to \infty$ for any $0 \leq t \leq t^\star$.
It remains to show that the limiting process $(M^\star_t)_{t\in[0, t^\star]}$ is in fact a complex-valued martingale.
A similar computation as in~\eqref{eqn:PDEcombGu} gives that
\begin{align*}
(\partial_t  + \weakgenX + &\genY) G_u^k(t^\star - t, x, y) \\ &= G_u^k(t^\star - t, x, y) \left ( F(x,\psi(t^\star-t,u)) - F_u^k(x,t^\star-t)\right ).
\end{align*}
Using Corollary~\ref{cor:afterMPresult}, we have that the process
\begin{align*}
M^k_t := G^k_u(&t^\star - t, X_t, Y_t) \\ &- \int_0^t G^k_u(t^\star - s, X_s, Y_s)\left(F(X_s, \psi(t^\star - s, u)) - F^k_u(X_s, t^\star - s) \right) ds,
\end{align*}
with $t \in [0, t^\star]$, is a martingale.
Moreover, since $G^k_u$ is bounded there is a constant $C$ such that
\begin{align}
    &\EVM{z}{}{\left|\int_0^t G^k_u(t^\star - s, X_s, Y_s)\left(F(X_s, \psi(t^\star - s, u)) - F^k_u(X_s, t^\star - s) \right) ds\right|}  \notag\\
    \label{eqn:Fubini_aux}
    &\qquad \leq C \, \EVM{z}{}{\left| \int_0^t \left(F(X_s, \psi(t^\star - s, u)) - F^k_u(X_s, t^\star - s) \right) ds \right|},
\end{align}
which converges to zero as $k \to \infty$ by~\eqref{eqn:approxFviaFk}.
Hence, the $L^1$-limit of $M^k_t$ as $k \to \infty$ is in fact equal to $M^\star_t$ for any $t \in [0, t^\star]$, from which we deduce that $(M^\star_t)_{t\in[0, t^\star]}$ is a martingale.
% Hence, the $L^1$-limit of $M^k_t$ as $k \to \infty$ is equal to $\lim_{k\to \infty} G^{k}_u(t^\star - t, X_t, Y_t) = \varphi^\star(t^\star - t, X_t; u) e^{\innerp{\psi(t^\star - t, u)}{Y_t}}$ for any $t \in [0, t^\star]$.
% Thus, we have that $\varphi^\star(t^\star - t, X_t; u) e^{\innerp{\psi(t^\star - t, u)}{Y_t}},\, 0 \leq t \leq t^\star$ is a martingale and 
Finally, taking expectation leads to
\[\varphi(t^\star, x; u) e^{\innerp{\psi(t^\star, u)}{y}} = M_0^\star = \EVM{z}{}{M_{t^\star}^\star} = \EVM{z}{}{e^{\innerp{u}{Y_{t^\star}}}} .\]
\end{proof}

It is shown in~\cite[Section~4.4]{bib:ethier-kurtz-86} (in particular Theorem~4.4.2), that if any two solutions for the martingale problem associated with $(\gen, \domgen\gen)$ generate the same one-dimensional distributions of $(X, Y)$, then the martingale problem is well-posed and $(X,Y)$ has the strong Markov property. 
In fact, the one-dimensional distributions of  $(X,Y)$ can be characterized by similar arguments as in  Theorem~\ref{prop:fourier}, which  gives uniqueness for the martingale problem and hence establishes  existence of  MMAPs.
Given their importance for our purposes, we summarize these results in the following

\begin{theorem}\label{thm:existence}
Consider the linear operator $(\gen, \domgen{\gen})$ given by~\eqref{eqn:defGenXY}, where the underlying parameters $(a, \alpha, b, \beta, c, \gamma, m, \mu)$ are $x$-admissible and where $\genX$ satisfies Assumption~\ref{assumption:genX}.
Then the martingale problem for $(\gen, \allowbreak \domgen{\gen})$ is well-posed and there exists a Markov-modulated affine process $(X, Y)$ corresponding to $(\gen, \allowbreak \domgen{\gen})$.
Moreover, $(X, Y)$ has the strong Markov property.
\end{theorem}
\begin{proof}
Let $\Pp^1_z, \Pp^2_z \in \MP{\gen, \domgen{\gen}}$ for any $z \in \stateXY$.
Fix arbitrary $u \in i\R^n$, $f \in \domgen{\genX}$, $t \geq 0$, and define for $j \in \{1, 2\}$,
\begin{equation}
\varphi_u^{j,f}(t,x) = \EVM{(x,y)}{j}{\exp\left(\int_0^t F(X_s, \psi(t - s, u)) ds\right)f(X_t)}, \quad x \in \stateX.
\end{equation}
Similar to the proof of Theorem~\ref{prop:fourier}, it is then easy to deduce that 
\[\EVM{z}{j}{f(X_t)e^{\innerp{u}{Y_t}}} = \varphi^{j,f}(t,x) e^{\innerp{\psi(t, u)}{y}}, \quad j \in \{1, 2\}.\]
Further, since $\varphi^{j,f}_u$ only involves the law of $X$, we have that $\varphi_u^{1,f}(t,x) = \varphi_u^{2,f}(t,x)$.
Since $u \in i\R^n$, $t \geq 0$  and $f \in \domgen{\genX}$ were arbitrary, we conclude that the one-dimensional distributions of $(X, Y)$ with fixed starting point $z$ are the same under any solution to $\MP{\gen, \allowbreak \domgen{\gen}}$ and with Theorem~4.4.2.a of~\cite{bib:ethier-kurtz-86} we arrive at the well-posedness of the martingale problem.
The strong Markov-property of $(X,Y)$ under $(\Pp_z)_{z \in \stateXY} \subset \MP{\gen, \domgen{\gen}}$ then follows from Theorem~4.4.2.c of~\cite{bib:ethier-kurtz-86}.
\end{proof}

The next result gives conditions ensuring that $\varphi(\cdot; u)$ is in fact the solution of a suitable Cauchy problem.

\begin{proposition}\label{prop:fourier2}
Let $\varphi(\cdot ; u)$ be the function studied in Theorem~\ref{prop:fourier} and suppose that 
\begin{enumerate}[label=(\roman*)]
\item\label{item:fourier1} for any $x\in \stateX, \, \varphi(\cdot, x;u) \in \Ck{1}{[0,T]}$,
\item\label{item:fourier2} for any $t \in [0, T], \,  F(\cdot, \psi(t, u)), \, \varphi(t, \cdot;u) F(\cdot, \psi(t, u)) \in \Cb{\stateX}$.
\end{enumerate}
Then $\varphi(\cdot;u)$ solves Cauchy problem~\eqref{eqn:cauchy_prime} with $\gengenX = \weakgenX$ and initial condition $f \equiv 1$.
\end{proposition}
\begin{proof}
We fix an arbitrary $t \in [0, T)$ and consider some small $s \in (0, T)$.
Straightforward computations show that
\begin{align*}
    &\frac{\semiX_s\varphi(t, \cdot; u)(x) - \varphi(t, x; u)}{s} \\&\qquad = \frac{\varphi(t + s, x; u) - \varphi(t, x; u)}{s} \\
    &\qquad \quad+ \frac{1}{s}\EVM{(x, y)}{}{e^{\int^{t +s}_0F(X_r, \psi(t + s - r, u))dr} \left( e^{-\int^s_0F(X_r, \psi(t + s - r, u))dr} -1 \right)},
\end{align*}
where the first term converges by assumption~\ref{item:fourier1} to $\partial_t \varphi(t, x;u)$ as $s \to 0+$.
Regarding the second term, we yet again consider the limit as $s \to 0+$, and using dominated convergence along with the fact that $X$ has càdlàg paths said limit is equal to
\begin{align*}
   &\EVM{(x, y)}{}{\lim_{s \to 0+}e^{\int^{t +s}_0F(X_r, \psi(t + s - r, u))dr} \frac{1}{s}\left( e^{-\int^s_0F(X_r, \psi(t + s - r, u))dr} -1 \right)} 
   \\ &\qquad = -\EVM{(x, y)}{}{e^{\int^{t}_0F(X_r, \psi(t - r, u))dr} F(x, \psi(t, u))} 
   \\ &\qquad = -{\varphi}(t,x;u) F(x, \psi(t, u)),
\end{align*}
and so we get by condition~\ref{item:fourier2} that $\varphi(t, \cdot;u) \in \domgen{\weakgenX}$. 
\end{proof}

Solving the Cauchy problem is of course substantially simplified in specific applications with narrower model assumptions.
For example, if $X$ is a diffusion, then Problem~\eqref{eqn:cauchy_prime} reduces to a PDE.
In such a case, conditions for the existence of a classical solution to the above Cauchy problem are available in various degrees of generality, but are usually similar in character.
The coefficients of $\genX$ typically need to be locally Lipschitz, grow at most linearly and suffice a local ellipticity assumption.
Regarding the perturbation controlled by $F$, common assumptions translate to the boundedness of $\stateX \times [0,T] \ni (x,t) \mapsto F(x,\psi(t,u))$, and to the condition that $F(x,\psi(t,u))$ is locally Lipschitz (H\"older) in $x$ ($t$).
See e.g.\ \cite[Chapter~V]{bib:lsu-68} for further details.
Some classical references on more general Cauchy problems are \cite{Pazy1983SemigroupsEquations}, \cite{Thieme1990SemiflowsOperators}, \cite{Engel2000One-ParameterEquations} or \cite{Vrabie2003C0-SemigroupsApplications}.

\section{Further properties}\label{sec:further-properties}

In this section we study further properties of MMAPs. In Section~\ref{sec:Feller} we give conditions ensuring  the Feller property of strongly regular Markov-modulated affine semigroups.
Sections~\ref{sec:semimartingales} and~\ref{sec:expmoments} discusse the semimartingale property of MMAPs and the existence of real exponential moments, respectively.

\subsection{Feller Property}\label{sec:Feller}

Using our previous results we now show  the Feller property of strongly regular MMAPs by probabilistic arguments.
Note that strongly $x$-admissible parameters are a necessary condition for a process to be Feller, since the Feller property requires that $\gen(\domgen{\gen}) \subset \Cinf{\stateXY}$.

The standard approach for studying the existence of Feller processes is to apply the theory of Hille-Yosida. % to show that their generators can be uniquely extended to a generator of a strongly continuous semigroup.
Since the operator $\genX$ may be unbounded, this inherently analytical task is challenging in our setup.
Detailed discussions and results on the generator of Feller processes can be found in e.g.\ \cite{Schilling1998FellerPaths}, \cite{Schilling1998ConservativenessSemigroups}, \cite{Schilling1998GrowthProcesses}, \cite[Chapter~VII]{revuz1999continuous} or \cite{Kuhn2018OnProcesses}.

\begin{proposition}\label{prop:feller}
%Suppose the assumptions of Theorem~\ref{prop:fourier} hold and that the parameters $(a, \alpha, b, \beta, \allowbreak c, \gamma, m, \mu)$ underlying $(\gen, \domgen{\gen})$ are strongly $x$-admissible.
%Suppose the parameters $(a, \alpha, b, \beta, c, \gamma, m, \mu)$ underlying $(\gen, \allowbreak \domgen{\gen})$ satisfy the following conditions,
Suppose that the parameters $(a, \alpha, b, \beta, \allowbreak c, \gamma, m, \mu)$ underlying $(\gen, \domgen{\gen})$ are strongly $x$-admissible,
 and that for any $x \in \stateX$,
\begin{equation}\label{eqn:momcondFeller}
    \int_{\stateY\backslash Q_0} \norm{\zeta}m(x, d\zeta) < \infty,
\end{equation}
where $Q_0 = \{\zeta \in \stateY \mid |\zeta_i| \leq 1, \, 1 \leq i \leq n\} \backslash \{0\}$.
% A strongly regular Markov-modulated affine process is Feller.
Then the Markov-modulated affine process $(X,Y)$ corresponding to $(\gen, \domgen{\gen})$ is Feller.

Further, let $(L, \allowbreak \domgen{L})$ be the infinitesimal generator of the associated Feller semigroup. 
Then $\domgen{\gen}$ is a core of $L$.
\end{proposition}
\begin{proof}
Theorem~\ref{thm:existence} gives the existence of a sub-Markov semigroup $(P_t)_{t \geq 0}$ corresponding to $(X,Y)$.
To show the strong continuity of $(P_t)_{t\geq 0}$, we rely on the classical result~\cite[Theorem~III.2.4]{revuz1999continuous}, according to which the property that $\lim_{t \to 0+} P_t f(z) = f(z), \, \forall z \in \mathcal{D}$ and any $f \in \Cinf{\mathcal{D}}$ is sufficient for strong continuity.
We take up the definition of the set $\overline{\Theta}_0$ from the proof of Lemma~\ref{lemma:domdenseCinf} and pick an arbitrary $\tilde{f} = fh \in \overline{\Theta}_0$.
We further fix arbitrary $T > 0$ and $u \in \U$.
Introduce the function $\bm{f}$ via 
\[\bm{f} : [0, T] \times \Cinf{\stateX} \to \Cinf{\stateX} : (t,\bm{\varphi}) \mapsto \bm{\varphi} F(\cdot, \psi(t, u)),
\]
which is well-defined since $F(\cdot, \psi(t, u)) \in \Cb{\stateX}$.
The moment condition~\eqref{eqn:momcondFeller} ensures that $F(x, \cdot) \in \Ck{1}{\U}$ (cf.\ \cite[Lemma~5.3.ii]{Duffie2003AffineFinance}), so that $\bm{f}$ is $C^1$ in $t$.
Further, the function $\bm{f}$ is continuously Fréchet-differentiable in its second argument.
Thus, by Theorem~6.1.5 of \cite{Pazy1983SemigroupsEquations} there exists a unique classical solution to Problem~\eqref{eqn:cauchy_prime}.
Similar to the proof of Theorem~\ref{prop:fourier}, we then introduce the process
\[
G_u^f(t - s, X_s, Y_s) := \varphi^f(t - s, X_s; u) e^{\innerp{\psi(t - s, u)}{Y_s}}, \quad s \in [0, T],
\]
where $\varphi^f(\cdot, x;u)$ solves~\eqref{eqn:cauchy_prime} for $\gengenX = \genX$ with $\varphi^f(0, x; u) = f(x)$, for any $x \in \stateX$.
By a similar argument as in the proof of Theorem~\ref{prop:fourier}, the process $G^f_u(t - s, X_s, Y_s), \, s \in [0, T],$ is a martingale.
So, for any $(x, y) \in \stateXY$, and appropriate $g \in \Ckc{\infty}{\R^{n-m}}$, $v \in \C^m_{--}$ (compare with the definition of $\overline{\Theta}_0$), it holds that
\begin{align*}
    P_t \tilde{f}(x, y) &= \EVM{z}{}{f(X_t) \int_{\R^{n-m}} e^{\innerp{(v, iq)}{(Y_t^+, Y_t^*)}} g(q) dq} \\
    &= \int_{\R^{n-m}}\EVM{z}{}{G^f_{(v, iq)}(0, X_t, Y_t) }dq \\
    &= \int_{\R^{n-m}} \varphi^f(t, x; v, iq) e^{\innerp{\psi(t, v, iq)}{y}} g(q) dq,
\end{align*}
where the second equality follows from Fubini's theorem.
Recall the definition of functions $F$ and $R$ in~\eqref{eqn:Riccati_F}--\eqref{eqn:Riccati_RJ}.
Via an application of dominated convergence, we then obtain
\begin{align*}
    \partial_t^+ P_t \tilde{f}(x, y)|_{t = 0} &= \int_{\R^{n-m}} \partial_t^+ \left\{\varphi^f(t, x; v, iq) e^{\innerp{\psi(t, v, iq)}{y}} g(q) \right\}|_{t = 0} dq \\
    &= \int_{\R^{n-m}} \left(\genX \varphi^f(0, x; v, iq) + \varphi^f(0, x; v, iq)F(x, \psi(0, v, iq)) \right)e^{\innerp{\psi(0, v, iq)}{y}} \\
    & \qquad + \innerp{R(\psi(0, v, iq))}{y}e^{\innerp{\psi(0, v, iq)}{y}}\varphi^f(0, x; v, iq) g(q) dg \\
    &= \int_{\R^{n-m}} (\genX f(x) + F(x, v, iq) + \innerp{R(v, iq)}{y}f(x))e^{\innerp{(v, iq)}{y}}g(q) dq\\
    &= \gen \tilde{f}(x, y),
\end{align*}
which implies $\bplim{t \to 0+} P_t \tilde{f} = \tilde{f}$.
The remaining arguments are identical to the ones from the proofs of \cite[Proposition~8.2]{Duffie2003AffineFinance} and \cite[Proposition~6.3]{Filipovic2005Time-inhomogeneousProcesses} by noting that the linear hull of $\overline{\Theta}_0$ is dense in $\Cinf{\stateXY}$, and so we conclude that $(P_t)_{t \geq 0}$ is a Feller semigroup.
Finally, the well-posedness of $\MP{\gen, \domgen{\gen}}$ (see Theorem~\ref{thm:existence}) implies that $\domgen{\gen}$ is a core of $L$, cf.\ \cite{Okitaloshima1996OnProblem}.
\end{proof}
\begin{remark}
We use the moment condition~\eqref{eqn:momcondFeller} in the above proposition to deduce that there exists a classical solution to~\eqref{eqn:cauchy_prime}.
Accordingly, we could have instead assumed the solvability of the Cauchy problem to arrive at the Feller property, but we believe the more explicit condition~\eqref{eqn:momcondFeller} to be of greater use for applications.
\end{remark}

\subsection{Semimartingale Property}\label{sec:semimartingales}

MMAPs can in general explode and get killed, both of which prompt a transition to the cemetery state $\{\Delta\}$. To formalize these concepts we introduce various  stopping times; our definitions follow \cite{Cheridito2005EquivalentProcesses}. First, we  define the lifetime of $Y$ as
\[
T_\Delta := \inf\{t \geq 0 \mid Y_t = \Delta\}.
\]
Second, we let $\norm{\Delta} = \infty$ and we introduce the following sequences of stopping times,
\begin{align*}
    T_k' &:= \inf \{t \geq 0 \mid \norm{Y_{t-}} \geq k \text{ or } \norm{Y_t} \geq k \}, \quad k \in \N, \\
    T_k &:=
    \begin{cases}
       T_k', \quad & \text{if } T_k' < T_\Delta, \\
     \infty, \quad &  \text{if } T_k' = T_\Delta,
    \end{cases} \quad k \in \N.
\end{align*}
Moreover, we define the  \emph{explosion time}  of $Y$ as $T_\infty := \lim_{k \to \infty} T_k$. Then the process $Y$ explodes if and only if $T_\infty < \infty$;  killing corresponds to the event $T_\Delta < T_\infty$ and a trajectory of $Y$ belongs to $\stateY$ for all $t \ge 0$ if and only if $T_\Delta = T_\infty = \infty$.
By construction, as $(T_k \wedge k)$ announces $T_\infty$, the explosion time is predictable, which allows us to stop $Y$ at a time before $T_\infty$ and to work on $[0, T_\infty)$ for studying the semimartingale property.

\begin{proposition}\label{prop:semimart}
Let $(X, Y)$ be a Markov-modulated affine process with  $x$-admissible parameters $(a, \alpha, b, \beta, c, \gamma, m, \mu)$, where  $c(\cdot) \equiv 0$ and $\gamma = 0$.
Then $\Pp_z(0 < T_\Delta < T_\infty) = 0$ (no killing) and  the process $Y$ is a $\stateY$-valued semimartingale on $[0, T_\infty)$.
Moreover,  $Y$ admits the $\{\calF_t\}$-characteristics $(B, C, \nu)$ with respect to the truncation function  $\chi$.
Here
\begin{align}
    \label{eqn:semimart_drift}
    B_t &= \int_0^t \indA{\{t< T_{\infty}\}}(\tilde{b(X_{s})} + \tilde{\beta} Y_s)ds, \\
    \label{eqn:semimart_diff}
    C_t &= 2 \int_0^t \indA{\{t< T_{\infty}\}} \left(a(X_s) + \sum_{i = 1}^m \alpha_i Y_s^{+,i} \right) ds, \\
    \label{eqn:semimart_jump}
    \nu(dt, d\xi) &= \indA{\{t< T_{\infty}\}} \left(m(X_t, d\xi) + \sum_{i = 1}^m Y_t^{+, i} \mu_i(d \xi) \right) dt,
\end{align}
where the function $  \tilde{b} : \stateX \to \stateY $ is given by $\tilde b(x) =  b(x) + \int_{\stateY \backslash \{0\}} (\chi_\indexI (\xi), 0) m(x, d\xi)$,  and where
\begin{align*}
    \tilde{\beta}_{kl} &:= \begin{cases} \beta_{kl} + (1 - \delta_{kl}) \int_{\stateY \backslash \{0\}} \chi_k (\xi) \mu_l (d \xi), \quad &\text{if } l \in \indexI, \\
    \beta_{kl}, &\text{if } l \in \indexJ, \, \text{for } 1 \leq k \leq n.\end{cases}
\end{align*}
\end{proposition}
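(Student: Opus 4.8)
The plan is to read everything off from the martingale identities supplied by Corollary~\ref{cor:afterMPresult}, exploiting that $c\equiv 0$ and $\gamma=0$ strip the operator $\genY$ of its zeroth-order (potential) term. I would fix $\Pp_z\in\MP{\gen,\domgen{\gen}}$ once and for all. Since the coefficients of $\genY$ grow in $y$ and the paths may explode, I would localise throughout along the announcing sequence $T_k\uparrow T_\infty$ (in the sense of \cite{Cheridito2005EquivalentProcesses}), so that on each $[0,T_k]$ the process $Y$ is confined to $\{\norm{y}\le k\}$ and every coefficient of $\genY$ is bounded there. This localisation is what keeps the statement honest on $[0,T_\infty)$: I never claim non-explosion, only that before explosion no killing occurs.

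For the no-killing claim I would apply Corollary~\ref{cor:afterMPresult} with the time-independent data $f\equiv 1$ (which lies in $\domgen{\weakgenX}$ with $\weakgenX 1=0$ by conservativeness of $X$, Assumption~\ref{assumption:genX}) and $g\equiv 1\in\Ckb{2}{\stateY}$. Because $c\equiv 0$ and $\gamma=0$ force $\genY 1=0$, the corollary makes $t\mapsto\indA{\{Z_t\neq\Delta\}}$ a martingale; stopping at $T_k$ yields a bounded martingale started at $1$, so $\Pp_z(Z_{t\wedge T_k}\neq\Delta)=1$ for every $t$. Letting $k\to\infty$ shows that $Z$ stays in $\stateXY$ on $\{t<T_\infty\}$, i.e.\ $\Pp_z(0<T_\Delta<T_\infty)=0$. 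For the semimartingale property I would, for each coordinate $i$ and each $k$, pick $g_i^k\in\Ckc{2}{\stateY}$ with $g_i^k(y)=y_i$ on $\{\norm{y}\le k\}$; Corollary~\ref{cor:afterMPresult} then makes $g_i^k(Y_t)-g_i^k(y)-\int_0^t\genY g_i^k(Y_s)\,ds$ a martingale, and after stopping at $T_k$ the integrand is bounded (the drift $b_i(X_s)+(\beta Y_s)_i$ is bounded on $\{\norm{y}\le k\}$, and the compensated-jump terms are controlled by $M(X_s)$ and $\mathcal{M}_i$ through admissibility, cf.\ Lemma~8.1 of \cite{Duffie2003AffineFinance}). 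Hence each stopped coordinate $Y_i^{T_k}$ is a martingale plus an absolutely continuous finite-variation part, so $Y$ is a $\stateY$-valued semimartingale on $[0,T_\infty)$.

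For the characteristics under strong $x$-admissibility I would identify $(B,C,\nu)$ by matching $\genY$ against the canonical generator of a semimartingale relative to $\chi$ (cf.\ \cite[Section~II.2]{bib:jacod-shiryaev-03}). Testing the localised identity against functions supported away from the origin identifies the jump compensator as $(m(X_t,d\xi)+\sum_i Y_t^{+,i}\mu_i(d\xi))\,dt$, which is \eqref{eqn:semimart_jump}; applying it to (truncations of) $y\mapsto y_iy_j$ and subtracting $y_i\,\genY y_j+y_j\,\genY y_i$ isolates the continuous part $2(a(X_s)+\sum_i\alpha_i Y_s^{+,i})$, giving \eqref{eqn:semimart_diff}; and the continuity of $(a,b,c)$ together with the weak continuity of $m(x,d\xi)$ in $x$ ensures that these coefficients, evaluated along $X$, are predictable. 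The drift \eqref{eqn:semimart_drift} then follows by reconciling truncations: $\genY$ compensates the $m$-jumps only in the $\indexJ$-coordinates (via $\chi_\indexJ$) and the $\mu_i$-jumps only in the $\indexJ(i)$-coordinates (via $\chi_{\indexJ(i)}$), whereas $B$ compensates all coordinates via $\chi$; the missing terms $\int\chi_\indexI(\xi)\,m(X_s,d\xi)$ and $\int\chi_k(\xi)\,\mu_l(d\xi)$ for $k\neq l$, $l\in\indexI$, must be absorbed into the drift, producing exactly $\tilde b$ and $\tilde\beta$.

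I expect the main obstacle to be twofold. The conceptually delicate point is that the triplet lives only on $[0,T_\infty)$ and is genuinely random --- an $\{\calF_t\}$-characteristic driven by the path of $X$ --- so I must patch the localised identities consistently across the $T_k$ and invoke a characterisation result (e.g.\ \cite[Theorem~II.2.42]{bib:jacod-shiryaev-03}) rather than merely exhibit an operator agreeing with $\genY$; here strong $x$-admissibility, and in particular the weak continuity of $m(x,d\xi)$, is precisely what secures predictability of $\nu$ and hence the existence of characteristics at all. The more technical, error-prone step is the truncation bookkeeping that turns the partially-compensated integro-differential form of $\genY$ into the $\chi$-compensated canonical form: this is where the off-diagonal corrections in $\tilde\beta$ and the index-set structure $\indexI,\indexJ,\indexJ(i)$ must be tracked coordinate by coordinate.
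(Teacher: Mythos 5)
The weak point is the very first step of your no-killing argument: applying Corollary~\ref{cor:afterMPresult} with $g\equiv 1$ to conclude that $t\mapsto\ind{Z_t\neq\Delta}$ is an \emph{unstopped} martingale. If that were true it would give $\Pp_z(Z_t\neq\Delta)=1$ for all $t$, i.e.\ conservativeness of $Y$ — which is false for general $x$-admissible parameters, since affine processes can explode (and explosion also sends $Z$ to $\Delta$). The proposition only claims no \emph{killing} before $T_\infty$, and the distinction is lost if you remove the spatial cutoff before localising. Concretely, the corollary is proved by approximating $g$ with $g_l(y)=g(y)\rho(\norm{y}^2/l)$ (Lemma~\ref{lemma:bpconv_genY}), and for $g\equiv1$ the terms of $\genY g_l(Y_s)$ proportional to $\norm{Y_s}$ on the annulus where $\nabla g_l\neq 0$ (the $\innerp{\beta y}{\nabla g_l}$ term and the jump terms weighted by $Y_s^{+,i}$) are $O(1)$ or worse along unstopped paths and do not vanish as $l\to\infty$; they are precisely what detects mass escaping to infinity. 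The paper's proof therefore reverses the order of limits: it stops at $T_k$ \emph{first}, so that $\norm{Y_s}<k$ before killing, whence for $l>k$ all local terms of $\genY g_l(Y_s)$ vanish and only the tail-jump integrals $\int_{\stateY\setminus D_{l-k}}(g_l(Y_s+\zeta)-1)\,m(X_s,d\zeta)$ and $\int_{\stateY\setminus D_{l-k}}(g_l(Y_s+\zeta)-1)\,Y_s^{i}\mu_i(d\zeta)$ survive; these are bounded by $k\,\mu_i(\stateY\setminus D_{l-k})\to0$, giving $L^1$-convergence of $-M^l_{\cdot\wedge T_k}$ to the monotone martingale $\ind{0<T_\Delta\le\cdot\wedge T_k}$, which must vanish. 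Your argument becomes correct once you perform the stopping before, not after, removing the cutoff; as written, the intermediate claim fails.

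The remainder of your plan is sound and differs from the paper mainly in technique. For the characteristics the paper does not test against polynomials and indicators: it forms $A(u)_t=\int_0^t F(X_s,u)+\innerp{R(u)}{Y_s}\,ds$, observes that $\genY e^{i\innerp{u}{y}}=e^{i\innerp{u}{y}}(F(x,u)+\innerp{R(u)}{y})$, so that $e^{i\innerp{u}{Y}}-\int_0^\cdot e^{i\innerp{u}{Y_s}}dA(u)_s$ is a local martingale on $[0,T_\infty)$, and then invokes Theorem~II.2.42 of \cite{bib:jacod-shiryaev-03} to obtain the semimartingale property and the triplet \eqref{eqn:semimart_drift}--\eqref{eqn:semimart_jump} in one stroke; the truncation corrections $\tilde b$ and $\tilde\beta$ are already encoded in $F$ and $R$, so no coordinate-by-coordinate bookkeeping is needed. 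Your route via truncated coordinate functions and the carr\'e du champ is workable, but note that $\genY(y_iy_j)-y_i\genY y_j-y_j\genY y_i$ yields $2a_{ij}+2\innerp{\alpha_{\indexI,ij}}{y^+}$ \emph{plus} the jump contribution $\int\xi_i\xi_j$ against the kernel, so you must still subtract the latter (using the $\nu$ you identified first) before arriving at \eqref{eqn:semimart_diff}.
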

\begin{proof}
At first we show that  $\Pp_z(0 < T_\Delta < T_\infty) = 0$.
We use a similar argument as in the proof of~\cite[Lemma~3.1]{Cheridito2005EquivalentProcesses}.
Take a sequence $(g_l) \in \Ckc{\infty}{\stateY}$ with $0 \leq g_l \leq 1$ and $g_l = 1$ on $D_l := \{y \in \stateY \mid \norm{y} < l \}$.
Note that $g_l(y) = 1$ for $l$ large enough and that $\lim_{l \to \infty} g_l(Y_{t \wedge T_k}) = \ind{t \wedge T_k < T_\Delta}\,\Pp_z\text{-a.s.}$.
For each $l$ we obviously have that $g_l \in \domgen{\gen}$, where we tacitly think of $(x, y) \mapsto g_l(y)$ as a constant function in $x$.
Since $X$ is conservative, we have $\weakgenX 1 = 0$ so that the process given by
\[
M_t^{l}  := g_l(Y_t) - g_l(y) - \int_0^t \genY g_l(Y_s) ds, \quad t \geq 0,
\]
is a martingale.
Take $l > k$ and recall that $c \equiv 0$ and $\gamma = 0$.
It then holds that
\begin{align*}
    M^l_{t \wedge T_k} &= g_l(Y_{t \wedge T_k}) - g_l(y) - \int_0^{t \wedge T_k} \Big( \int_{\stateY\setminus D_{l-k}} (g_l(Y_s + \zeta) - 1)m(X_s, d \zeta) \\
    &\quad + \sum_{i = 1}^m\int_{\stateY\setminus D_{l-k}} (g_l(Y_s + \zeta) - 1) Y^i_s \mu_i(d \zeta)\Big) ds.
\end{align*}
Note that for any $i \in \indexI$ it holds that
\begin{equation}
    \label{eqn:proofSemimart_ineq}
\int_0^{t \wedge T_k} \int_{\stateY\setminus D_{l-k}}|g_l(Y_s + \zeta) - 1| Y^i_s \mu_i(d \zeta) ds \leq k \int_0^{t \wedge T_k} \mu(\stateY\setminus D_{l-k}) ds,
\end{equation}
which converges almost surely to $0$ as $l \to \infty$ by Lebegue's dominated convergence theorem.
A similar argument also applies with $m(X_s, \cdot)$.
Moreover, for fixed $k \leq l$, with inequalities of type~\eqref{eqn:proofSemimart_ineq} it is easily seen that almost surely
\[
|M^l_{t \wedge T_k}| \leq 1 + c_k t,
\]
for some constant $c_k$.
Thus, $-M^l_{t \wedge T_k}$ converges in $L^1$ to $1 - \ind{t \wedge T_k < T_\Delta} = \ind{0 < T_\Delta \leq t \wedge T_k}$ as $l \to \infty$ for all $t \geq 0$, which shows that $\ind{0 < T_\Delta \leq \cdot \wedge T_k}$ is a martingale.
However, as it is non-increasing it needs to be almost surely constant, which implies $\Pp_z(0 < T_\Delta < T_\infty) = 0$.

Next, for $u \in \R^n$ we introduce the process
\[
A(u)_t := i\innerp{u}{B_t} - \frac{1}{2}\innerp{u}{C_t u} + \int \left(e^{i \innerp{u}{y}} - 1 - i \innerp{u}{\chi(y)} \right) \nu([0,t] \times dy), \quad t \in [0, T_\infty),
\]
where $B, C$ and $\nu$ are as in~\eqref{eqn:semimart_drift}--\eqref{eqn:semimart_jump}.
Note that $A(u)_t = \int_0^t F(X_s, u) + \innerp{R(u)}{Y_s}ds$, where $F$ and $R$ are specified in~\eqref{eqn:Riccati_F} and \eqref{eqn:Riccati_Ri}--\eqref{eqn:Riccati_RJ}, respectively.
The process $e^{i \innerp{u}{Y}} - \int_0^\cdot \mathfrak{L}^{Y | X}(X_s)  e^{i \innerp{u}{Y_s}} ds$ is a martingale on $[0, T_\infty)$ by Corollary~\ref{cor:afterMPresult}.
And, for $t \in [0, T_\infty)$
\begin{align*}
    e^{i \innerp{u}{Y_t}} - \int_0^t \mathfrak{L}^{Y | X}(X_s)  e^{i \innerp{u}{Y_s}} ds &= e^{i \innerp{u}{Y_t}} - \int_0^t e^{i \innerp{u}{Y_s}} \left(F(X_s, u) + \innerp{R(u)}{Y_s} \right) ds \\
    &= e^{i \innerp{u}{Y_t}} - \int_0^t e^{i \innerp{u}{Y_s}} dA(u)_s,
\end{align*}
and the result follows from Theorem~II.2.42 of \cite{bib:jacod-shiryaev-03}.
\end{proof}

We continue with a few implications of Proposition~\ref{prop:semimart}. 
First, take the characteristics $(B, C, \nu)$ from Proposition~\ref{prop:semimart}, set $J_t(\chi) := \sum_{s \leq t} (\Delta Y_s - \chi(\Delta Y_s))$ and denote the integer-valued random measure counting the jumps of $Y$ by $\pi(ds, d\zeta)$.
Then, we obtain the following canonical decomposition of $Y$,
\begin{equation}
    \label{eqn:semimartdecomp_Z}
    Y = Y_0 + Y^c + \int_0^\cdot\int_{\stateY \backslash \{0\}} \chi(\zeta) \left(\pi(ds, d\zeta) - \nu(ds, d\zeta) \right) + J(\chi) + B,
\end{equation}
where $Y^c$ denotes the continuous local martingale part of $Y$, so that $\langle Y^{i,c}, Y^{j,c} \rangle = C^{ij}$ for $i, j \in \indexI \cup \indexJ$.
Second, as a conservative Feller process with càdlàg sample paths, $X$ is a semimartingale (cf.\ Lemma~3.2 of \cite{Schilling1998GrowthProcesses}).
Hence  under the assumptions of Proposition~\ref{prop:semimart}, we have that $Z = (X,Y)$ is a semimartingale on $[0, T_\infty)$.

\subsection{Exponential moments}\label{sec:expmoments}

In this section we resort to classical semimartingale methods in order to study real exponential moments of $Y_t$.
For this we need a variant of \eqref{eqn:cauchy_prime}, namely
\begin{equation}
    \label{eqn:cauchy_expmoments}
    \begin{cases}
    &\partial_t {\widetilde{\varphi}}(t, x; u) = \weakgenX {\widetilde{\varphi}}(t, x; u) + {\widetilde{\varphi}}(t,x;u) F(x, \widetilde\psi(t, u)), \quad t \in [0, T], \, x \in \stateX, \\
    & {\widetilde{\varphi}}(0, \cdot; u) \equiv 1,
    \end{cases} \tag{$\text{CP}^\star_{T, u}$}
\end{equation}
with $T > 0$ and where $F$ is as in Theorem~\ref{prop:fourier}, albeit with the modification that $u \in \R^d$, and where $\widetilde\psi$ is analogously a modification of $\psi$, that is $\widetilde\psi : [0, T] \times \R^n \to R^n$ solves
\begin{align}
    \label{eqn:riccati_real_u}
    \partial_t \widetilde\psi (t, u) &= R (\widetilde\psi (t, u)) , \quad \widetilde\psi (0, u) = u.% \\
    % \label{riccati2}
    % \psi_\indexJ(t, u) &= e^{\beta^* t} u^*,
\end{align}
We define a classical solution of~\eqref{eqn:cauchy_expmoments} analogously to the previous case of~\eqref{eqn:cauchy_prime}. %, but note that a solution $\bm{\widetilde{\varphi}}(t, u)$ to~\eqref{eqn:cauchy_expmoments} takes values in $C(\stateX)$.

\begin{proposition}\label{prop:expmoments_finite}
Let $z \in \stateXY$, $T \geq t^\star \geq 0$ and $u \in \R^n$ be arbitrary and consider $\Pp_z \in \MP{\gen, \allowbreak \domgen{\gen}}$.
Further, let $(X,Y)$ be a Markov-modulated affine process, where in addition $Y$ is a $\stateY$-valued semimartingale with $\{\calF_t\}$-characteristics $(B, C, \nu)$ given by~\eqref{eqn:semimart_drift}--\eqref{eqn:semimart_jump}.
Suppose that $\widetilde\psi(\cdot, u)$ is $C^1$ and solves~\eqref{eqn:riccati} and that the function $[0,T] \to \Cb{\stateX} : t \mapsto \widetilde{\varphi}(t, \cdot; u)$ is a classical solution of~\eqref{eqn:cauchy_expmoments}.
Suppose the parameters $(a, \alpha, b, \beta, c, \gamma, m, \mu)$ are strongly $x$-admissible and further satisfy the following conditions,
\begin{enumerate}[label=(\roman*)]%[label=(\alph*)]
\item $c(\cdot) \equiv 0$ and $\gamma = 0$,
\item $\int_{\{|\zeta| > 1\}} e^{\innerp{\psi(t, u)}{\zeta}}{m(x, d\zeta)} < \infty, \, \forall (t, x) \in [0, T] \times \stateX$,
\item $\int_{\{\zeta_k > 1\}}\zeta_k e^{\innerp{\psi(t, u)}{\zeta}} \mu_l(d \zeta) < \infty, \, 1 \leq k, l \leq m, \forall t \in [0, T]$.
\end{enumerate}
Then,
\[
\EVM{z}{}{e^{\innerp{u}{Y_{t^\star}}}} = \widetilde\varphi({t^\star}, x; u) e^{\innerp{\widetilde\psi({t^\star}, u)}{y}}.
\]
\end{proposition}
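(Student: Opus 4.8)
The plan is to follow the blueprint of Theorem~\ref{prop:fourier}, but since for real $u$ the map $y \mapsto e^{\innerp{u}{y}}$ is unbounded, the bounded-test-function machinery of Corollary~\ref{cor:afterMPresult} is not directly available and must be combined with a localization argument and the semimartingale characteristics from Proposition~\ref{prop:semimart}. Writing $g_u(t, x, y) := \widetilde\varphi(t, x; u)\, e^{\innerp{\widetilde\psi(t, u)}{y}}$, I would show that
\[
M_t := g_u(t^\star - t, X_t, Y_t), \quad t \in [0, t^\star],
\]
is a martingale under $\Pp_z$. Evaluating at the endpoints and using $\widetilde\varphi(0, \cdot; u) \equiv 1$ and $\widetilde\psi(0, u) = u$ then gives $M_{t^\star} = e^{\innerp{u}{Y_{t^\star}}}$ and $M_0 = \widetilde\varphi(t^\star, x; u)\, e^{\innerp{\widetilde\psi(t^\star, u)}{y}}$, which is the asserted identity.

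First I would record the space-time harmonic identity $(\partial_t + \weakgenX + \genY)\, g_u(t^\star - t, \cdot, \cdot) = 0$. This is the verbatim computation from Theorem~\ref{prop:fourier}: the Cauchy problem~\eqref{eqn:cauchy_expmoments} cancels the $\weakgenX \widetilde\varphi$ term together with the $F$-term, while the Riccati system~\eqref{eqn:riccati_real_u} cancels the $\innerp{R(\widetilde\psi)}{y}$-term. The only genuinely new point for real $u$ is that the integro-differential part of $\genY g_u$ now contains the exponentially weighted integrals $\int_{\stateY \backslash \{0\}} (e^{\innerp{\widetilde\psi}{\zeta}} - 1 - \innerp{\widetilde\psi_\indexJ}{\chi_\indexJ(\zeta)})\, m(x, d\zeta)$ and its $\mu_i$-analogue; the exponential moment conditions (ii) and (iii) are precisely what guarantees their convergence, so that $F(x, \widetilde\psi(t, u))$ and $\genY g_u$ are well-defined and the cancellation goes through.

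Next I would establish that $M$ is a \emph{local} martingale. Using the stopping times $T_k$ of Section~\ref{sec:semimartingales}, which bound $\norm{Y}$ and announce the predictable explosion time $T_\infty$, I would replace $e^{\innerp{\widetilde\psi}{y}}$ by a compactly supported cutoff $\theta_N e^{\innerp{\widetilde\psi}{y}}$, apply Corollary~\ref{cor:afterMPresult} to this now-bounded test function, and stop at $T_k$. On $[0, t \wedge T_k]$ the path of $Y$ stays in a fixed compact set, so for $N > k$ the cutoff alters only the jump integral inside $\genY$; by dominated convergence along $N \to \infty$---with the exponentially weighted measures of (ii) and (iii) as integrable majorants---this error vanishes and the harmonic identity shows that $M_{\cdot \wedge T_k}$ is a martingale for every $k$. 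Since $c \equiv 0$ and $\gamma = 0$, Proposition~\ref{prop:semimart} excludes killing, so letting $k \to \infty$ exhibits $M$ as a nonnegative local martingale on $[0, t^\star]$, hence a supermartingale, which already yields $\EVM{z}{}{e^{\innerp{u}{Y_{t^\star}}}} \le \widetilde\varphi(t^\star, x; u)\, e^{\innerp{\widetilde\psi(t^\star, u)}{y}}$.

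The main obstacle is the converse, i.e.\ upgrading the supermartingale $M$ to a true martingale, which amounts to uniform integrability of the localizing sequence $\{M_{t \wedge T_k}\}_{k \in \N}$. Here I would exploit that $\widetilde\varphi(t, \cdot; u)$ is $\Cb{\stateX}$-valued by hypothesis, so $M$ is dominated by a constant multiple of $e^{\innerp{\widetilde\psi(t^\star - t, u)}{Y_t}}$, and it suffices to bound $\sup_{t \le t^\star} \EVM{z}{}{e^{\innerp{\widetilde\psi(t^\star - t, u)}{Y_t}}}$. Applying Itô's formula to $e^{\innerp{\widetilde\psi(t^\star - t, u)}{Y_t}}$ with the characteristics~\eqref{eqn:semimart_drift}--\eqref{eqn:semimart_jump}, the drift reduces to $\int_0^t F(X_s, \widetilde\psi(t^\star - s, u))\, e^{\innerp{\widetilde\psi}{Y_s}} ds$, whose integrand is finite thanks to (ii) and (iii); since $F(\cdot, \widetilde\psi(t, u))$ is bounded in $x$ (as $\widetilde\varphi$ solves~\eqref{eqn:cauchy_expmoments}), a Grönwall estimate gives a uniform-in-$k$ exponential moment bound and hence the required uniform integrability. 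Uniform integrability together with the local martingale property turns $M$ into a genuine martingale, and evaluating at the endpoints as above completes the proof.
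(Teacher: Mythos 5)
Your route is genuinely different from the paper's, and the place where it comes apart is exactly the step you yourself flag as ``the main obstacle''. The paper does not attempt a joint martingale argument for $g_u(t^\star-t,X_t,Y_t)$ at all: it conditions on $\calF^X_{t^\star}$, observes that given the path of $X$ the process $Y$ is a time-inhomogeneous affine semimartingale with characteristics \eqref{eqn:semimart_drift}--\eqref{eqn:semimart_jump}, and invokes \cite[Theorem~5.1]{Kallsen2010ExponentiallyProcesses} to obtain
$\EVM{z}{}{e^{\innerp{u}{Y_{t^\star}}}\mid\calF^X_{t^\star}}=\exp\bigl(\int_0^{t^\star}F(X_s,\widetilde\psi(t^\star-s,u))\,ds+\innerp{\widetilde\psi(t^\star,u)}{y}\bigr)$;
conditions (i)--(iii) are there precisely to verify the hypotheses of that theorem. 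What remains is then a martingale argument involving only \emph{bounded} quantities in the $Y$-direction: Corollary~\ref{cor:afterMPresult} applied to $\widetilde\varphi$, followed by integration by parts, represents $\widetilde\varphi(t^\star-t,X_t;u)\exp(\int_0^tF(X_s,\widetilde\psi(t^\star-s,u))ds)$ as a stochastic integral with bounded integrand against a square-integrable martingale, whence taking expectations yields $\EVM{z}{}{\exp(\int_0^{t^\star}F\,ds)}=\widetilde\varphi(t^\star,x;u)$. The unboundedness of $y\mapsto e^{\innerp{u}{y}}$ never enters the martingale step.

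The genuine gap in your version is the upgrade from local martingale to true martingale. Your Gr\"onwall estimate requires taking expectations in It\^o's formula for the stopped process $e^{\innerp{\widetilde\psi(t^\star-t,u)}{Y_{t\wedge T_k}}}$, but the stopping times $T_k$ only control $\norm{Y_{t-}}$ strictly before $T_k$; the value $Y_{T_k}$ itself can overshoot the level $k$ by an arbitrarily large jump, so $\EVM{z}{}{e^{\innerp{\widetilde\psi}{Y_{t\wedge T_k}}}}$ is not finite a priori and the Gr\"onwall inequality cannot be closed without a separate, nontrivial control of the overshoot in terms of the exponentially weighted jump measures. This overshoot problem is exactly the technical content of \cite{Kallsen2010ExponentiallyProcesses}, which is why the paper delegates to that result rather than reproving it. Two smaller issues: you use nonnegativity of $M$ (hence of $\widetilde\varphi$) to obtain the supermartingale bound, but positivity of the classical solution of \eqref{eqn:cauchy_expmoments} is not among the hypotheses and would itself have to be derived from the Feynman--Kac representation you are in the course of proving; and your Gr\"onwall constant needs $\sup_{(t,x)}|F(x,\widetilde\psi(t,u))|<\infty$, whereas (ii)--(iii) only give finiteness pointwise in $x$.
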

\begin{proof}
Fix an arbitrary $t^\star \in [0, T]$ and use $\calF^X_{t^\star}$ to denote the $\Pp_z$-completion of $\sigma(X_s : s \in [0, t^\star])$.
Conditional on $\calF^X_{t^\star}$, $Y_{t^\star}$ is distributed as a time-inhomogeneous affine semimartingale evaluated at $t^\star$, the exponential moments of which are handled in~\cite{Kallsen2010ExponentiallyProcesses}.
Within our setup, we readily fulfill the conditions 1, 2, 4 and 5 of ~\cite[Theorem~5.1]{Kallsen2010ExponentiallyProcesses}.
Regarding condition 3, we introduce the function $\varphi^0 : [0,T] \times \R^d \times D_{\stateX} \to \R$ via
\[
\varphi^0(t, u, f) = \int_0^t F(f(s), \widetilde\psi(t - s, u)) ds.
\]
Then, an application of \cite[Theorem~5.1]{Kallsen2010ExponentiallyProcesses} gives that
\[
\EVM{z}{}{e^{\innerp{u}{Y_{t^\star}}}\mid \calF^X_{t^\star}} = e^{\varphi^0({t^\star}, u, X) + \innerp{\widetilde\psi({t^\star}, u)}{y}}.
\]
We need to show that $\EVM{z}{}{e^{\varphi^0({t^\star}, u, X)}} = \widetilde{\varphi}({t^\star}, x; u)$ in order to conclude the proof.
To do so, note that by assumption $\widetilde{\varphi}$ solves~\eqref{eqn:cauchy_expmoments} and by an application of Corollary~\ref{cor:afterMPresult} we get the martingality of
\[
 M_t^{\widetilde{\varphi}} := \widetilde\varphi(t^\star - t, X_t ; u) + \int_0^t \widetilde\varphi(t^\star - s, X_s ; u) F(X_s, \widetilde\psi(t^\star - s, u)) ds, \quad t \in [0, T].
\]
Integration by parts and collecting terms gives us
\begin{align}
    \widetilde{\varphi}(t^\star - t, X_t\calF\,;\, &u) \exp\left(\int_0^t F(X_s, \widetilde\psi(t^\star - s, u)) ds\right) \notag\\
    \label{eqn:expmoments_real_proof2}
    & = \int_0^t \exp\left(\int_0^s F(X_r, \widetilde\psi(t^\star - r))dr \right) dM_s^{\widetilde{\varphi}}.
\end{align}
The local martingale in~\eqref{eqn:expmoments_real_proof2} is in fact a true martingale since the integrand is bounded and since $M^{\widetilde{\varphi}}$ is a square integrable martingale (cf.\ \cite[Theorem~IV.11]{protter2005stochasticb}).
So, by rearranging and taking expectations we arrive at the desired result.
\end{proof}
The general results of the appendix of \cite{Duffie2003AffineFinance}  provide further conditions for the characterization of exponential moments of $Y_t$.
To begin with, let $U \subset \C^n$ be an open neighbourhood of $0$.
If $\varphi(t, \cdot)$ and $\psi(t, \cdot)$ have analytic extensions on $U \times \stateX$ and $U$, respectively, then the exponential moments $\EVM{(x,y)}{}{e^{\innerp{u}{Y_t}}}$ are finite for all $u \in U \cap \R^n, (x, y) \in \stateX \times \stateY, 0 < t \leq T$.
Moreover, under the same conditions it also holds that
\[
\EVM{(x,y)}{}{e^{\innerp{u}{Y_t}}} = \varphi(t, x; u) e^{\innerp{\psi(t, u)}{y}},
\]
for all $u \in U$ such that $\operatorname{Re}(u) \in U \cap \R^n, (x, y) \in \stateX \times \stateY$.
As the function $\psi$, or more precisely the system of generalized Riccati equations defining it, is the same as in the standard affine case, we can outsource the study of suitable extensions for $\psi$ by referencing existing results, most notably the systematic treatment of exponential moments of affine processes in \cite{Keller-Ressel2015ExponentialProcesses}.
The study of regularity properties of $u \mapsto \varphi(t,x;u)$ on the other hand is not straightforward considering the definition of $\varphi$ in Theorem~\ref{prop:fourier}.
\section{Applications in Mathematical Finance}\label{sec:applications}

In this section we discuss applications of MMAPs in mathematical finance. Our goal is to illustrate the wide range of modelling possibilities offered by this class of processes. We begin with an extension of Theorem~\ref{prop:fourier} that enables us to solve many pricing problems with Fourier inversion techniques. Moreover,  we consider in detail a model for the joint pricing of bonds,  equity options and credit derivatives that captures many stylized facts of financial data and we sketch several further applications.
Throughout this section we fix some $z \in \stateXY$ and assume that $(X,Y)$ is a Markov-modulated affine semimartingale under $\Pp_z$.

\subsection{Derivative pricing via Fourier inversion techniques} \label{subsec:fourier-pricing}

In  many financial applications one needs to evaluate expectations of the form
\begin{equation}
    \label{eqn:discounting}
Q_t f(z) := \EVM{z}{}{\exp\left(\int_0^t L(Y_s)ds\right)f(X_t, Y_t)}, \quad t \geq 0, \, f \in \Bb{\stateXY},
\end{equation}
for an affine transformation $L : \stateY \to \R$,  $y \mapsto  l + \innerp{\lambda}{y}$, for some $l \in \R$, $\lambda \in \R^n$. This can be done efficiently via Fourier inversion techniques. For this we need  to compute for all  $(q, u) \in i\R^d \times i \R^n $ the expectation
$Q_t f^{(q,u)} (z) $ where  $f^{(q,u)}(x,y) =  e^{\innerp{(q,u)}{(x,y)}}$.
As in the standard affine case, this can be achieved using simple modifications of the functions $F$ and $R$ (see~\eqref{eqn:Riccati_F}--\eqref{eqn:Riccati_RJ}). Replace $F(x,u)$ by $\widetilde{F}(x, u) = F(x, u) + l$ and $R(u)$ by $\widetilde{R}(u) = R(u) + \lambda$ in the equations of Theorem~\ref{prop:fourier}, fix some $t >0$,
 pick arbitrary $(u,q) \in \U \times i\R^d$ and suppose $\widetilde\varphi$ is a classical solution of
\begin{equation*}
    (\partial_s - \weakgenX) \widetilde\varphi(s, x, u) = \widetilde\varphi(s, x, u) \widetilde{F}(x, \widetilde\psi(s, u)), \quad \widetilde\varphi(0, x, u) = e^{\innerp{q}{x}}, \, \, \forall x \in \stateX,
\end{equation*}
with
\begin{align*}
    \partial_s \widetilde{\psi} (s, u) &= \widetilde{R} (\widetilde{\psi} (s, u)) , \quad \widetilde{\psi} (0, u) = u.% \\
    % \label{riccati2}
    % \psi_\indexJ(t, u) &= e^{\beta^* t} u^*,
\end{align*}
Then, similar arguments as the ones preceding Theorem~\ref{prop:fourier} show that
\begin{equation}\label{eqn:extended_transform}
    \EVM{z}{}{\exp\left(\int_0^t L(Y_s)ds\right)e^{\innerp{(q, u)}{(X_t, Y_t)}}} = \widetilde\varphi(t, x, u) e^{\innerp{\widetilde{\psi}(t, u)}{y}}. %, \quad \forall z = (x, y) \in \stateXY.
\end{equation}
Suppose there exists analytic extensions of $\widetilde\varphi(t, x, \cdot)$ and $\widetilde\psi(t, \cdot)$ to some $u \in \C^n\backslash \U$, then the transform formula~\eqref{eqn:extended_transform} also holds for this particular $u$.

If $\Pp_z$ is a risk-neutral pricing measure, then expectations in the form of Equation~\eqref{eqn:discounting} are typically understood as the price of a financial instrument with payoff $f(X_t, Y_t)$ and with risk-free short rate $-L(Y_s)$.
Suppose $Y_t$ represents the value of some underlying and we are interested in the price of a derivative with payoff $h(Y_t)$, then it is common to find a representation of $h$ in the form of
\[
    h(y) = \int_{\R^q} e^{\innerp{u + iAv}{y}} \Tilde{h}(v) dv,
\]
for some matrix $A \in \R^{n \times q}$ with $q \leq n$, for an appropriate integration kernel $\Tilde{h} : \R^q \to \C$, and where $u$ is chosen such that $\EVM{z}{}{e^{\innerp{u}{Y_t}}} < \infty$.
The integration kernel $\Tilde{h}$ can be determined by Fourier-inverting $h$.
For many popular payoff functions, such inversions are explicitly available, see e.g.\ \cite[Section~10.3.1]{filipovic2009term} for univariate examples or \cite{Eberlein2010AnalysisApplications} (and the references therein) for multivariate payoffs.
In such a setup, $Q_t h $ reduces to
\begin{equation}
\label{eqn:int_kernerl_rep_payoff}
\int_{\R^q} \widetilde\varphi(t, x, u + iAv) e^{\innerp{\widetilde{\psi}(t, u + iAv)}{y}} \Tilde{h}(v) dv,
\end{equation}
which is easily computed using numerical integration.

\subsection{Joint Pricing of Equity Options and Credit Derivatives}

Next we show  how Markov-modulated affine diffusions can be used for the joint and dynamically consistent valuation of equity and credit derivatives written on the same underlying firm. More precisely, we consider a firm that may default at some doubly stochastic  default time $\tau$ (see e.g.\ \cite[Chapter~10.6]{bib:mcneil-frey-embrechts-15}) and we introduce a model for the joint dynamics of the risk-free short rate $r$, of the hazard rate $\gamma$ of $\tau$, of the pre-default logarithmic stock price $p$ and of the stock price volatility  $v$. In this section we view $\Pp_z$ as risk-neutral pricing measure.
One salient feature  of our model is that it allows for  negative association between the risk-free short rate and the firm's hazard rate as well as its volatility process.
Modelling negative dependence between \emph{positive} diffusions (i.e.\ processes of `CIR-type') is not feasible within the standard affine framework, see e.g.\ \cite{bib:duffie-liu-01}.
Here we overcome this well-known drawback of affine processes  rather naturally via the common dependence of the positive processes on the modulating  process $X$.

\paragraph*{The Model} We suppose that $X$ is a solution of  the following SDE,
\begin{equation}
\label{eqn:JacobiX}
     dX_t = -\kappa (X_t - \theta)dt + \sigma\sqrt{X_t(1 - X_t)}dW_t^X,
\end{equation}
for some standard Brownian motion $W^X$ and with parameters $\theta \in [0,1], \kappa, \sigma \geq 0$.
It follows that $X$ is a Jacobi process on the state space $\stateX = [0,1]$ with
\[
\genX f(x) = -\kappa(x - \theta)\frac{\partial f(x)}{\partial x}  + \frac{1}{2} \sigma^2 (x(1-x))\frac{\partial^2 f(x)}{\partial x^2}, \quad f \in \domgen{\genX} \subset \Ckc{2}{\stateX}.
\]

The risk-free short rate $r$ and the firm's hazard rate $\gamma$ are given as weak solutions of the following SDEs,
\begin{align}
    \label{eqn:creditCIR}
        dr_t &= \left(\bar{b}_r + b_r X_t + \beta_r r_t \right) dt + \sqrt{2\alpha_r r_t} dW_t^r, \\
    d\gamma_t &= \left(\bar{b}_\gamma + b_\gamma (1 - X_t) + \beta_\gamma \gamma_t \right) dt + \sqrt{2\alpha_\gamma \gamma_t} dW^\gamma_t,
\end{align}
for independent  one-dimensional Brownian motions $W^r$ and $W^\gamma$  and with constants $\alpha_r, \alpha_\gamma > 0$, $\bar{b}_r, b_r, \bar{b}_\gamma, b_\gamma \geq 0$ and  $\beta_\gamma, \beta_r \in \R$.

We consider the following extension of the Heston model for  the pre-default dynamics of the logarithmic stock price $p$ and its instantaneous variance $v$
\begin{align}
    \label{eqn:stockLogP}
    dp_t &= \left(r_t + \gamma_t - \frac{1}{2} v_t\right) dt + \sqrt{v_t} dW^p_t, \\
    \label{eqn:stockVar}
    dv_t &= \left(\bar{b}_v + b_v(1 - X_t) + \beta_v v_t \right) dt + \sqrt{2 \alpha_v v_t} dW_t^v,
\end{align}
where $W^p$ and $W^v$ are two Brownian motions with correlation $\rho \in [-1,1]$.
Moreover, we have constants $\alpha_v > 0$, $\bar{b}_v, b_v \geq 0$ and $\beta_v < 0$. We assume that at $\tau$ the stock price jumps to zero,
that is we assume that $S_t = \indA{\{\tau >t\}} e^{p_t}$. This gives the following stock price dynamics
\begin{equation} \label{eq:dSt}
dS_t =  r_t S_t dt  +  \sqrt{v_t} S_t dW^p_t - S_{t-} d M_t^\tau \quad \text{ with } M_t^\tau = \indA{\{\tau \le t\}} - \int_0^{\tau \wedge t} \gamma_s ds.
\end{equation}
Note that $S$ is a martingale which justifies the interpretation of $\Pp_z$ as risk neutral measure.
The exogenous process  $X$ can be viewed  as  state of the economy, with values close to $1$ ($0$) representing expansion (recession). 
Note that the drift of $r$ is increasing in $X_t$ while the drift  of $\gamma$ is decreasing in $X_t$. 
The opposite impact of $X$ on the drift of $\gamma$ and $r$ allows to model negative association between the short rate and the hazard rate. 
This phenomenon is empirically well documented, cf.\ \cite{Longstaff1995ADebt} or \cite{Duffee1998TheSpreads}; it might be due to the fact that in times of financial crisis where hazard rates are high, central banks typically lower the reference rates.
Moreover, the proposed specification creates natural dependence between the risk-neutral probability of default and the return distribution of $p$:  as the economic environment worsens ($X$ decreases), the risk of default increases, as does the volatility of the stock returns.
Stock option pricing with credit risk is for example treated in~\cite{Carr2006AProcesses} or~\cite{carr2009stock}.

\paragraph*{Mathematical aspects} Next we show that the dynamics \eqref{eqn:JacobiX}, \eqref{eqn:creditCIR}, \eqref{eqn:stockLogP}, \eqref{eqn:stockVar} fit into our framework of Markov-modulated affine processes.  First note that $X$ is a polynomial process (see~\cite{Cuchiero2012PolynomialFinance}) with compact support, and as such it is Feller (cf.\ Proposition~4.1 of \cite{timeinhompolynomial2020}).
The coefficients of $X$ satisfy  the regularity assumptions of Theorem~8.2.1 in~\cite{bib:ethier-kurtz-86}, so that $\Ckc{\infty}{\stateX}$ is a core of $(\genX, \domgen{\genX})$. Overall, Assumption~\ref{assumption:genX} is met. Next  we let $Y = (Y^1, Y^2, Y^3, Y^4)^\transpose = (r, \gamma, v, p)^\transpose$ so that    $\stateY = \R_+^3\times \R$  and $m = 3$, $n = 1$.  Set
\begin{align*}
    \alpha_1 &= \begin{pmatrix}
    \alpha_r & 0 & 0 & 0\\
    0 & 0 & 0 & 0 \\
    0 & 0 & 0 & 0 \\
    0 & 0 & 0 & 0
    \end{pmatrix}, \quad
    \alpha_2 = \begin{pmatrix}
    0 & 0 & 0 & 0\\
    0 &  \alpha_\gamma & 0 & 0 \\
    0 & 0 & 0 & 0 \\
    0 & 0 & 0 & 0
    \end{pmatrix}, \quad
    \alpha_3 = \begin{pmatrix}
    0 & 0 & 0 & 0\\
    0 & 0 & 0 & 0 \\
    0 & 0 & \alpha_v & \sqrt{\alpha_v} \rho \\
    0 & 0 & \sqrt{\alpha_v} \rho & 1/2
    \end{pmatrix}, \\
    b(x) &= \begin{pmatrix}
    \bar{b}_r + b_r x \\
    \bar{b}_\gamma + b_\gamma (1 - x) \\
    \bar{b}_v + b_v(1 - x) \\
    0
    \end{pmatrix}, \quad
    \beta = \begin{pmatrix}
    \beta_r & 0 & 0 & 0\\
    0 & \beta_\gamma & 0 & 0 \\
    0 & 0 & \beta_v & 0 \\
    1 & 1 & -1/2 & 0
    \end{pmatrix},
\end{align*}
and let $\alpha = (\alpha_1, \alpha_2, \alpha_3)$. With these definitions the dynamics of  $Y$ are governed by the  $x$-admissible parameters $(0, \alpha, b(x), \beta, 0, 0, 0, 0)$.

The Cauchy problem~\eqref{eqn:cauchy_prime} reduces to the following second order PDE,
\begin{align*}
    \partial_t \varphi(t, x; u) &= -\kappa(x - \theta)\partial_x \varphi(t, x; u) + \frac{1}{2} \sigma^2 (x(1-x))\partial_{xx} \varphi(t, x; u) \\
    &\qquad + \varphi(t, x; u)(\innerp{b(x)}{\psi(t, u)} + \innerp{a(x)\psi(t, u)}{\psi(t, u)}), \quad u \in \U, \\
    \varphi(0, x; u) &= f(x), \quad f \in \Ck{2}{[0,1]}.
\end{align*}
The existence of a classical solution to the above PDE follows from~\cite[Theorem~6.1.6]{Pazy1983SemigroupsEquations}. %follows from Proposition~\ref{prop:sufficient}.
% Since $\genX$ generates a strongly continuous semigroup on $C([0,1])$, the existence of a classical solution to the above PDE follows from~\cite[Theorem~6.1.6]{Pazy1983SemigroupsEquations}.

\paragraph*{Derivative pricing}
We work on an enlarged filtration $\calG_t = \calF_t \vee \sigma(\ind{\tau \leq s} : 0 \leq s \leq t), \,t \geq 0$ where $(\mathcal{F}_t )$ is the filtration generated by the processes $Z=(X,Y)$, so that $(\calG_t)$ contains information regarding the occurrence of default. Here we discuss the pricing of so-called \emph{survival claims}  with payoff $H_T = \ind{\tau > T} h(Y_T)$; simple examples would be a call option on the stock where $h(Y_T) = (e^{p_T}-K )^+$ or a defaultable bond with zero recovery where $h(Y_T) =1 $.  Recall that $\Pp$ is the risk neutral measure. Hence the price of a survival claim is given by
$$
\EVM{z}{}{\ind{\tau > T} e^{-\int_0^T r_s ds} h (Y_T)} = \EVM{z}{}{e^{-\int_0^T (r_s + \gamma_s) ds}h(Y_T)},
$$
where the equality follows from standard results for stochastic hazard rate models (see e.g.\ \cite[Chapter~10.6]{bib:mcneil-frey-embrechts-15}).
The right hand side can then be computed via Fourier pricing using pricing can then be done via Fourier pricing and evaluation of relation~\eqref{eqn:int_kernerl_rep_payoff}.
In our model  the function $\widetilde{\psi}$ is explicitly available (as in the standard Heston model) as is the transform $\widetilde{h}$ for several popular payoff functions.
The one-dimensional PDE for $\widetilde{\varphi}$ can be efficiently solved with numerical methods such as~\cite{Beckett2001OnEquidistribution}.
Alternatively, the theory of BSDEs provides a probabilistic dual perspective of such PDEs, which is studied in~\cite[Section~4]{ElKaroui1997BackwardFinance}.
More recently, \cite{Han2018SolvingLearning} use such a BSDE approach to design deep learning methods to efficiently solve semilinear PDEs.
Finally, the integral in~\eqref{eqn:int_kernerl_rep_payoff} can be handled with standard Fourier methods as outlined in \cite{carr1999option}, \cite{article} or \cite{articleb}.

\paragraph*{Extensions} Methods for  pricing  credit derivatives where the payment occurs directly at $\tau$ (as in the case of a credit default swap)  with a transform formula such as~\eqref{eqn:extended_transform} are presented in~\cite{Frey2020HowModelsb}.
The above example allows for an immediate extension to a multivariate setting with hazard rates $\gamma^1, \dots, \gamma^{m}$, where for each $i \in \indexI$ the process $\gamma^i$ follows a model of type~\eqref{eqn:creditCIR} (albeit with different parameters and with Brownian motions independent of each other).
Such a specification provides a useful framework for the analysis  of portfolio credit derivatives.
Conditional on $X$, the hazard rates are independent and default dependence amongst the individual firms is generated via their respective loadings on the common factor process $X$.
\cite{Frey2020HowModelsb} use a similar setup with $X$ replaced by a finite-state Markov chain for the pricing and analysis of European safe bonds.

\subsection{Further Applications}

We end this section by sketching two further possible use cases of MMAPs.
The first example deals with Markov-modulated Lévy processes and the second one with Markov-modulated Hawkes processes.

Subordination techniques are a popular approach to build tractable multivariate models for financial assets (see e.g.\ \cite{Luciano2006AModel} or \cite{Luciano2010MultivariateCalibration}).
However, the flexibility of subordinating Lévy processes is limited by the fact that the resulting process is typically again of Lévy type.
Instead, by using MMAPs it is feasible to construct a larger class of processes, even if the underlying modulated process is conditionally of Lévy type.
Additionally, with our class of processes we can precisely target certain aspects of the joint multivariate distribution.
As an example, we consider $Y \in \R^n$ as a model for the log prices of $n$ assets and where we aim at jointly controlling the tail behaviour of the $n$ assets solely via their driving process $X$.
We assume that for each $i \in \{1, \dots, n\}$ the entry $Y^i$ is a pure jump process, and $\varrho_i$ is a random measure selecting the stochastic jump times and sizes of $Y^i$, that is $\sum_{0 < s \leq t} (Y^i_s - Y^i_{s-}) = \int_0^t \int_{\R \setminus \{0\}} \zeta \varrho_i(d\zeta, ds)$, for $t \geq 0$.
The compensator of $\varrho_i(d\zeta, ds)$ is $m_i(X_t, d\zeta)dt$, where $X$ is some exogeneous Markov process satisfying Assumption~\ref{assumption:genX}.
We further assume that
\[
m^i(x, d \zeta) =  \frac{C_i}{|\zeta|^{1 + \Tilde{Y}_i}} \left( \ind{\zeta < 0} e^{-G_i(x)|\zeta|} + \ind{\zeta > 0} e^{-M_i(x)|\zeta|}\right) d \zeta, \quad x \in \stateX,
\]
with constants $C_i > 0, \Tilde{Y}_i < 2$ (note that $C_i$ and $\Tilde{Y}_i$ can of course also depend on $x$, but for the purpose of this example we choose to work with the given simplified setup) and $G_i, M_i \in \Cb{\stateX}$ with $G_i(x), M_i(x) \geq 0$ for each $x$.
For fixed $x$, $m(x, d\zeta)$ is the Lévy kernel of a CGMY process (cf.\ \cite{Carr2002TheInvestigation}).
Symmetry of the return distribution is controlled by the parameters  $G_i(x), M_i(x)$, while for $\Tilde{Y}_i < 0$  the process $Y^i$ has finite activity (otherwise it has infinite activity).
Suppose that the differences $M_i(x) - G_i(x) > 0$ increase with $x$ for each $i \in \{1, \dots, n\}$.
Then, in such a specification the left tails of the returns get heavier as $X$ increases.

Recently, Hawkes processes have received a lot of attention for stock price modelling, both from a statistical perspective and for the purpose of option pricing (see \cite{Bacry2015HawkesFinance}, \cite{Hawkes2018HawkesReview}
\cite{ElEuch2018TheVolatility} or \cite{ElEuch2019TheModels}).
The framework of MMAPs allows the extension of affine Hawkes processes (i.e.\ Hawkes processes with exponential excitation kernel) to the situation where the jump intensities depend on an exogenous Markov process.
For that, consider self-exciting counting processes $(Y^1, \dots, Y^k)$ with values in $\N^k$ with respective intensities $(Y^{k+1}, \dots, Y^m)$, where $m = 2k$.
We further consider a Markov process $X$ which is in line with Assumption~\ref{assumption:genX}.
The intensities are given via
% \begin{align*}
%     \begin{pmatrix}
%     dY^{k+1}_t \\ \vdots \\ dY_t^m
%     \end{pmatrix} =
%     \left(\begin{pmatrix}
%     b_{k+1}(X_t) \\ \vdots \\ b_m(X_t)
%     \end{pmatrix} + \beta \begin{pmatrix}
%     Y^{k+1}_t \\ \vdots \\ Y_t^m
%     \end{pmatrix} \right)dt + \widetilde\delta \begin{pmatrix} dY^1_t \\  \vdots \\ dY_t^k
%     \end{pmatrix},
% \end{align*}
\[
Y^{j + k}_t = Y^{j+k}_0e^{-t \beta_j} + \int_0^t b_{j + k}(X_s) e^{(t - s) \beta_{j}} ds +\sum_{i =1}^k  \int_0^t  \widetilde{\delta}_{ji}e^{(t - s) \beta_{j}} dY^{i}_s, \quad j \in \{1, \dots, k\},
\]
where $b_{k+1}, \dots ,b_m$ are positive bounded (sufficiently regular) functions on $\stateX$, where $(\widetilde{\delta}_{ji})_{i,j = 1}^k \allowbreak = \widetilde\delta$ is a $k\times k$-matrix with positive entries, and where $(\beta_{1}, \dots, \beta_k) \in \R^k$.
We introduce the auxiliary index set $\mathfrak{I} := (k+1, \dots, m)$ and further set $\beta^0 = \operatorname{diag}(\beta_1, \dots \beta_k)$.
Subsequently, the generator of $(X,Y)$ is given by
\begin{align*}
    &\genX f(x, y) + \innerp{(b_{k+1}(x), \dots, b_m(x)) + \beta^0 y_{\mathfrak{I}}}{\nabla_{\mathfrak{I}} f(x, y)} \\
    &\qquad + \sum_{i = 1}^k (f(x, y+ e_i +  \widetilde{\delta}_{1,i}e_{k+1} + \dots + \widetilde{\delta}_{k,i}e_{m}) - f(x, y)) y_{i + k}, \quad f \in \Ckc{2}{\stateXY}.
\end{align*}
We use $\beta$ to denote the $m \times m$-matrix with entries $\beta_{\mathfrak{I}\mathfrak{I}} = \beta^0$ and zeroes otherwise.
Let $\delta_y(d \zeta)$ denote the Dirac measure on $\stateY$ centered at $y$.
Then, we set $\mu_j = \delta_{(e_j - k, \widetilde{\delta}_{\indexI, (j - k)})}$ for $j \in \mathfrak{I}$, to deduce that the $x$-admissible parameters are
\begin{align*}
\big(0, 0, (0_k, b_{k+1}(x), \dots, b_m(x)), \beta, 0, 0, 0, \mu \big),
\end{align*}
where $\mu = (0_k, \mu_{k+1}, \dots, \mu_m)$ and $0_k$ denotes a vector of $k$ zeroes. 

\appendix
\section{Definitions}\label{appendix:defs}
Let $E$ be a locally compact and separable space.
\begin{definition}
A family $(T_t)_{t \geq 0}$ of linear operators on $\Bb{E}$ is a \emph{semigroup}, if
\[
T_0 = \Id, \quad T_t T_s f = T_s T_t f = T_{t + s}f\quad \forall f \in \Bb{E}, \, s, t \geq 0.
\]
A semigroup is \emph{sub-Markov}, if for all $t \geq 0$
\begin{align}
    \label{eqn:def_positivity}
T_t f &\geq 0, \quad \forall f \in \Bb{E} \text{ with } f \geq 0, \\
    \label{eqn:sub_Markov_property}
T_t f &\leq 1, \quad \forall f \in \Bb{E} \text{ with } f \leq 1.
\end{align}
\end{definition}
Property~\eqref{eqn:def_positivity} is typically referred to as positivity preserving and \eqref{eqn:sub_Markov_property} is the sub-Markov property.

\begin{definition}\label{def:feller}
A \emph{Feller semigroup} is a sub-Markov semigroup satisfying the Feller property
\[
T_t f \in \Cinf{E} \quad \forall f \in \Cinf{E}, \, t > 0,
\]
and which is strongly continuous in $\Cinf{E}$, i.e.
\[
\lim_{t \to 0} \norm{T_t f - f}_\infty = 0 \quad \forall f \in \Cinf{E}.
\]
\end{definition}

Note that in the terminology of~\cite{bib:ethier-kurtz-86} a Feller semigroup on $\Cinf{E}$ is a strongly continuous, positivity preserving and conservative contraction semigroup.
The above definition~\ref{def:feller} is in line with the one given in e.g.\ \cite{revuz1999continuous}.
The following definition introduces a concept weaker than Feller semigroup. 
For its definition, we equip $\Cb{E}$ with locally uniform convergence.

\begin{definition}\label{def:Cb_feller}
A \emph{$C_b$-Feller semigroup} is a sub-Markov semigroup satisfying
\[
T_t f \in \Cb{E} \quad \forall f \in \Cb{E}, \, t > 0,
\]
and for which $t \mapsto T_t f$ is continuous in the topology of locally uniform convergence in $\Cb{E}$.
\end{definition} 
\section{Additional Proofs and results }\label{appendix:proof}

\begin{proof}[Proof of Proposition~\ref{cor:afterMPresult}.]
We begin with the situation, where $f$ and $g$ are independent of the time variable $t \in [0, \infty)$, that is we fix arbitrary $f \in \domgen{\weakgenX}$, $g \in \Ckb{2}{\stateY}$ and introduce the process
\[
M^{f,g}_t := f(X_t)g(Y_t) - f(x)g(y) - \int_0^t (\weakgenX + \genY) f(X_s) g(Y_s) ds, \quad t \geq 0.
\]
Note that by assumption $\weakgenX f \in \Cb{\stateX}$.
We pick a sequence $\{f_k\} \subset \Ckc{2}{\stateX}$ with $\bplim{k \to \infty} \allowbreak f_k = f$.
By the Feller property of $(\CsemiX_t)$ we have that $\genX f_k \in \Cinf{\stateX}$ for each $k$.
Since $\Cb{\stateX}$ is dense in $\Bb{\stateX}$ with respect to bp-convergence (cf.\ \cite[Proposition~3.4.2]{bib:ethier-kurtz-86}), we know that there exists some $\Tilde{f} \in \Bb{\stateX}$ with $\bplim{k \to \infty} \genX f_k = \Tilde{f}$.
Choose arbitrary $t > 0$ and $x \in \stateX$ and consider
\[
\frac{\CsemiX_t f_k(x) - f_k(x)}{t} = \frac{1}{t} \int_0^t \CsemiX_s \genX f_k(x) ds.
\]
Take for the above equation the limit $k \to \infty$ and note that $\CbsemiX_t f \in \domgen{\weakgenX}$ (see \cite[Lemma~2.2.3]{lorenzi2006analytical}) to arrive at
\begin{align*}
    \frac{1}{t} \int_0^t \CbsemiX_s \weakgenX f(x) ds &= \frac{\CbsemiX f(x) - f(x)}{t}= \frac{1}{t} \int_0^t \CbsemiX \Tilde{f}(x) ds,
\end{align*}
which shows that $\bplim{k \to \infty} \genX f_k = \weakgenX f$.
Moreover, by Lemma~\ref{lemma:bpconv_genY} below we can pick a sequence $(g_k) \in \Ckc{2}{\stateY}$ with $\bplim{k \to \infty} (g_k, \genY g_k) = (g, \genY g)$.

Overall, $\bplim{k \to \infty}(f_k g_k, \gen f_k g_k) = (fg, (\weakgenX + \genY) fg)$, and by dominated convergence (and the associated $L^1$-convergence), we conclude that $M^{f,g}$ is a martingale.
Getting back to the initial (time-dependent) functions $f, g$, we fix arbitrary $t_2 > t_1 \geq 0$, and we consequently get
\begin{align*}
    &\EVM{z}{}{f(t_1, X_{t_2})g(t_1, Y_{t_2}) - f(t_1, X_{t_1})g(t_1, Y_{t_1}) \mid \calF_{t_1}} \\
    &\qquad \qquad \qquad \qquad = \EVM{z}{}{\int_{t_1}^{t_2} (\weakgenX + \genY) f(t_1, X_s)g(t_1, Y_s) ds \mid \calF_{t_1}}
\end{align*}
Moreover, by the fundamental theorem of calculus, we have
\begin{align*}
    &\EVM{z}{}{f(t_2, X_{t_2})g(t_2, Y_{t_2}) - f(t_1, X_{t_2})g(t_1, Y_{t_2}) \mid \calF_{t_1}} \\
    &\qquad \qquad \qquad \qquad = \EVM{z}{}{\int_{t_1}^{t_2} \partial_t \big( f(s, X_{t_2})g(s, Y_{t_2}) \big) ds \mid \calF_{t_1}},
\end{align*}
and so by Lemma~4.3.4 of \cite{bib:ethier-kurtz-86} we arrive at the desired result (set $v = \partial_t fg$ and $w = (\weakgenX + \genY) fg$ to be in line with the notation of that lemma).
\end{proof}

\begin{proof}[Proof of Lemma~\ref{lemma:boundednessF}.]
To show the boundedness of $F(\cdot, u)$, we clearly only need to deal with the integral part involving $m(\cdot, d\zeta)$.
We start with a useful Taylor expansion,
\begin{equation}\label{eqn:boundedFTaylor}
\begin{aligned}
    &\left(e^{\innerp{u}{\zeta}} - e^{\innerp{u_\indexJ}{\zeta_\indexJ}}\right) + \left(e^{\innerp{u_\indexJ}{\zeta_\indexJ}} - 1 - \innerp{u_\indexJ}{\zeta_\indexJ}\right) \\
    &\qquad = \sum_{i \in \indexI } u_i \zeta_i e^{\innerp{u_\indexJ}{\zeta_\indexJ}}  \int_0^1 e^{\innerp{u_\indexI}{s\zeta_\indexI}} ds + \sum_{i,j \in \indexJ} u_i u_j \zeta_i \zeta_j  \int_0^1 e^{\innerp{u_\indexJ}{s\zeta_\indexJ}} (1 - s) ds.
    \end{aligned}
\end{equation}
Introduce the sets $Q_0 := \{\zeta \in \stateY \mid |\zeta_i| \leq 1, \, 1 \leq i \leq n\}$ and $Q_0^0 := Q_0\backslash \{0\}$, and note that $\chi_\indexJ(\zeta) = \zeta_\indexJ$ on $Q_0$.
We use~\eqref{eqn:boundedFTaylor} and compute
\begin{align*}
    &\int_{\stateY \backslash \{0\}} |e^{\innerp{u}{\zeta}} - 1 - \innerp{u_\indexJ}{\chi_\indexJ(\zeta)}|m(x, d\zeta) \\
    &\qquad \leq C_1 \int_{Q_0^0} |\innerp{\chi_\indexI(\xi)}{\bm{1} } + \norm{\chi_\indexJ(\xi)}^2| m(x, d\xi) \\
    &\qquad \qquad +  \int_{\stateY \backslash Q_0} |e^{\innerp{u}{\zeta}} + 1 - \innerp{u_\indexJ}{\chi_\indexJ(\zeta)}|m(x, d\zeta) \\
    &\qquad \leq C_1 M(x, Q_0^0) + C_2 M(x, \stateY \backslash Q_0),
\end{align*}
for some constants $C_1, C_2$.
Clearly, these estimates hold locally uniformly in $u$.
The assertion then follows since $\sup_{x \in \stateX}M(x, \stateY \backslash \{0\}) < \infty$, cf.\ Definition~\ref{def:xadmissible}.
\end{proof}

\begin{lemma}\label{lemma:bpconv_genY}
Let $x \in \stateX$ and $g \in \Ckb{2}{\stateY}$ be arbitrary.
Then there exists a sequence $(g_k)_{k \in \N}$ with elements in $\Ckc{2}{\stateY}$ such that $\bplim{k \to \infty} (g_k, \genY g_k) = (g, \genY g)$.
\end{lemma}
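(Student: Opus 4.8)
The plan is to approximate $g$ by spatial truncation. Fix once and for all a cutoff $\eta \in \Ckc{\infty}{\R^n}$ with $0 \le \eta \le 1$, $\eta \equiv 1$ on the unit ball and $\eta \equiv 0$ outside the ball of radius $2$, and set $\eta_k(y) := \eta(y/k)$ and $g_k := \eta_k g$. Then each $g_k$ lies in $\Ckc{2}{\stateY}$, and since $\eta_k(y) \to 1$ for every fixed $y$ we get $g_k \to g$ pointwise with $\norm{g_k}_\infty \le \norm{g}_\infty$. To control the derivatives I would write $\nabla g_k = \eta_k \nabla g + g\nabla\eta_k$ and the analogous product-rule expression for the Hessian, and use $\norm{\nabla^j \eta_k}_\infty = k^{-j}\norm{\nabla^j\eta}_\infty \to 0$ for $j=1,2$. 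This gives that $\nabla g_k \to \nabla g$ and the Hessian of $g_k$ converges to that of $g$, pointwise and with a bound uniform in $k$ depending only on $\norm{g}_\infty$, $\norm{\nabla g}_\infty$ and $\norm{\nabla^2 g}_\infty$. This already settles the pointwise convergence of the purely differential part of $\genY g_k$ (the $a$-, $b$-, $c$-, $\alpha$-, $\beta$- and $\gamma$-terms) to that of $\genY g$.

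The main work lies in the two integral terms. Fix $y \in \stateY$ and consider the integrand $h_k(\zeta) := g_k(y+\zeta) - g_k(y) - \innerp{\nabla_\indexJ g_k(y)}{\chi_\indexJ(\zeta)}$, which converges pointwise in $\zeta$ to its analogue $h$ built from $g$. I would pass the limit through $\int \cdot\, m(x, d\zeta)$ by dominated convergence, splitting $\stateY \backslash \{0\}$ into $\{\norm\zeta \le 1\}$ and $\{\norm\zeta > 1\}$. On $\{\norm\zeta \le 1\}$ a second-order Taylor expansion, together with $\chi_\indexJ(\zeta) = \zeta_\indexJ$ near the origin and the fact that jumps satisfy $\zeta_\indexI \ge 0$, yields $|h_k(\zeta)| \le C(\innerp{\chi_\indexI(\zeta)}{\bm 1} + \norm{\chi_\indexJ(\zeta)}^2)$ with $C$ uniform in $k$ (it only involves $\sup_k \norm{\nabla g_k}_\infty$ and $\sup_k\norm{\nabla^2 g_k}_\infty$); this dominating function is $m(x,\cdot)$-integrable precisely because $M(x) < \infty$. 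On $\{\norm\zeta > 1\}$ the crude bound $|h_k(\zeta)| \le 2\norm{g}_\infty + \sqrt{n}\,\sup_k\norm{\nabla g_k}_\infty$ is integrable since admissibility guarantees that $m(x,\cdot)$ has finite mass there. The measures $\mu_i$ are treated identically, now with $\chi_{\indexJ(i)}$ and the weight defining $\mathcal{M}_i$. This produces pointwise convergence of both integral terms.

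The remaining subtlety, which I expect to be the real obstacle, is that $\genY g$ is \emph{not} bounded: the terms carrying a factor $y^+_i$ (coming from $\alpha$, $\gamma$ and the $\mu_i$) and the $\innerp{\beta y}{\nabla g}$ term grow linearly in $y$. Hence the correct uniform control is not a supremum bound but the weighted estimate $|\genY g_k(y)| \le C(1 + \norm{y^+})$ with $C$ independent of $k$, and $\bplim$ must be read in this dominated sense. The only pieces not already manifestly of this form are the cutoff corrections involving $g\nabla\eta_k$ and $g\nabla^2\eta_k$: here I would exploit that $\nabla\eta_k$ is supported in the annulus $\{k \le \norm y \le 2k\}$ and has magnitude $O(1/k)$, so that a product such as $\innerp{\beta y}{g\nabla\eta_k}$ is bounded by $\norm\beta\, 2k\,\norm{g}_\infty\, O(1/k) = O(1)$ uniformly in $k$; the same cancellation of the linear growth against the $O(1/k)$ factor controls every correction term. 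Combining the pointwise convergence with this $(1+\norm{y^+})$-domination delivers $\bplim_{k\to\infty}(g_k, \genY g_k) = (g, \genY g)$ in exactly the form needed for the dominated-convergence argument of Corollary~\ref{cor:afterMPresult}, where the occupation integral $\int_0^t (1 + \norm{Y_s^+})\,ds$ is finite on $\{t < T_\infty\}$.
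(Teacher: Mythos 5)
Your construction is essentially the paper's own proof: there one sets $g_k(y) = g(y)\,\rho(\norm{y}^2/k)$ for a smooth cutoff $\rho$ and verifies the convergence of $\genY g_k$ term by term, with the $O(1/k)$ decay of the cutoff's derivatives cancelling the linear growth of the coefficients and dominated convergence (small/large jump splitting) handling the integral terms, exactly as you describe. Your observation that $\genY g$ is in general unbounded for $g \in \Ckb{2}{\stateY}$, so that the stated bp-convergence can only hold as pointwise convergence with a linear-growth domination, is accurate and is indeed glossed over in the paper (its displayed estimate vanishes only pointwise in $y$); the one correction is that the growth also enters through $\innerp{y^*}{\beta^* \nabla_\indexJ g}$, so the dominating weight should be $1 + \norm{y}$ rather than $1 + \norm{y^+}$.
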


\begin{proof}
As in the proof of~\cite[Proposition 8.2]{Duffie2003AffineFinance}, we choose a function $\rho \in \Ckc{\infty}{\R_+}$ with
\[
\rho(r) = \begin{cases} 1, \quad & \text{if } r \leq 1, \\
0, \quad & \text{if } r > 5,\end{cases}
\]
as well as $0 \leq \rho(r) \leq 1$, $|\partial_r \rho(r)| \leq 1$ and $|\partial_r^2 \rho(r)| \leq 1$ for all $r \in \R_+$.
We introduce a sequence of functions $g_k \in \Ckc{2}{\stateY}, \, k \in \N$, via
\begin{equation}
    \label{eqn:approxCharAux1}
    g_k(y) := g(y) \rho(\norm{y}^2 / k).
\end{equation}
It is easy to see that $\bplim{k \to \infty} g_k = g$.
Even more so, for arbitrary $(x, y) \in \stateXY$ it holds that
\begin{align*}
    &|\genY \left(g_k - g\right)(y)| \\
    &\qquad \leq  \sum_{i,l=1}^n \norm{a_{il}}_\infty \Big\{|\partial_{y_i} \partial_{y_l} g(y)|\big(1 - \rho(\norm{y}^2 / k) \big)  \\
    &\qquad \quad + \frac{2 |y_i + y_l|}{k} \rho'(\norm{y}^2 / k) \norm{(\partial_{y_i} + \partial_{y_l})g}_\infty + \frac{4 |y_l y_i |}{k^2} \rho''(\norm{y}^2 / k) \norm{g}_\infty \Big\} \\
    &\qquad \quad + \frac{2}{k} \norm{g}_\infty \rho'(\norm{y}^2 / k) \sum_{i = 1}^n \norm{a_{ii}}_\infty \notag \\
    %\label{eqn:corMP_ineq_lin1}
    &\qquad \quad + \sum_{i,l=1}^n |\innerp{\alpha_{\indexI,il}}{y^+}| \Big\{|\partial_{y_i} \partial_{y_l} g(y)|\big(1 - \rho(\norm{y}^2 / k) \big) \notag\\
    & \qquad \quad + \frac{2 |y_i + y_l|}{k} \rho'(\norm{y}^2 / k) \norm{(\partial_{y_i} + \partial_{y_l})g}_\infty + \frac{4 |y_l y_i |}{k^2} \rho''(\norm{y}^2 / k) \norm{g}_\infty \Big\} \notag\\
    %\label{eqn:corMP_ineq_lin2}
    &\qquad \quad + \frac{2}{k} \norm{g}_\infty \rho'(\norm{y}^2 / k) \sum_{i = 1}^n |\innerp{\alpha_{\indexI,il}}{y^+}| \notag\\
    & \qquad \quad +\sum_{i = 1}^n \norm{b_i}_\infty \Big\{|\partial_{y_i} g(y)|\big(1 - \rho(\norm{y}^2 / k) \big) + \frac{2|y_i|}{k} \rho'(\norm{y}^2 / k)\norm{g}_\infty \Big\} \notag \\
    & \qquad \quad +\sum_{i = 1}^n |(\beta y)_i| \Big\{|\partial_{y_i} g(y)|\big(1 - \rho(\norm{y}^2 / k) \big) + \frac{2|y_i|}{k} \rho'(\norm{y}^2 / k)\norm{g}_\infty \Big\} \notag \\
    & \qquad \quad + \norm{c}_\infty |g(y) - g_k(y)| + \innerp{\gamma}{y^+} |g(y) - g_k(y)| \notag \\
    & \qquad \quad + \int_{\stateY\setminus \{0\}} \big| g(y + \zeta)(1 - \rho(\norm{y + \zeta}^2 / k) - g(y)(1 - \rho(\norm{y}^2 / k) \notag \\
    & \qquad \quad-  (1 - \rho(\norm{y}^2 / k)) \innerp{\nabla_\indexJ g(y)}{\chi_\indexJ(\zeta)}\big| m(x, d \zeta) \\
    & \qquad\quad + \sum_{j = m + 1}^{n} \frac{2|y_j|}{k} \rho'(\norm{y}^2 / k) \norm{g}_\infty  \norm{M}_\infty \notag \\
    & \qquad \quad + \sum_{i = 1}^m \int_{\stateY\setminus \{0\}} \big| g(y + \zeta)(1 - \rho(\norm{y + \zeta}^2 / k) - g(y)(1 - \rho(\norm{y}^2 / k) \notag \\
    & \qquad \quad-  (1 - \rho(\norm{y}^2 / k)) \innerp{\nabla_{\indexJ(i)} g(y)}{\chi_\indexJ(\zeta)}\big| y_i^+ \mu_i( d \zeta) \\
    & \qquad \quad + \sum_{i = 1}^{m} \frac{2\norm{y_\indexJ}}{k} \rho'(\norm{y}^2 / k) \norm{g}_\infty y_i^+\mathcal{M}_i, \notag
\end{align*}
which converges to $0$ as $k \to \infty$ and so we get $\bplim{k \to \infty} \genY g_k = \genY g$.
\end{proof}

\section*{Acknowledgments}
We are grateful to an anonymous referee and to the Associate Editor for their helpful comments that have led to a substantial improvement of our results and the present article.
We also like to thank Christa Cuchiero and Thorsten Schmidt for their valuable suggestions.% \newpage

\label{References}
% Bibliography
% \pagestyle{plain}
%%natbib:
\bibliography{references}

\begin{thebibliography}{10}

\bibitem{timeinhompolynomial2020}
{\sc Agoitia~Hurtado, M. F. D.~C., and Schmidt, T.}
\newblock {Time-inhomogeneous polynomial processes}.
\newblock {\em Stochastic Analysis and Applications 38}, 3 (2020), 527--564.

\bibitem{articleb}
{\sc Albrecher, H., Mayer, P., Schoutens, W., and Tistaert, J.}
\newblock {The Little Heston Trap}.
\newblock {\em Wilmott\/} (2007), 83--92.

\bibitem{Bacry2015HawkesFinance}
{\sc Bacry, E., Mastromatteo, I., and Muzy, J.-F.}
\newblock {Hawkes processes in finance}.
\newblock {\em Market Microstructure and Liquidity 01}, 01 (2 2015).

\bibitem{Beckett2001OnEquidistribution}
{\sc Beckett, G., Mackenzie, J.~A., Ramage, A., and Sloan, D.~M.}
\newblock {On the numerical solution of one-dimensional PDEs using adaptive
  methods based on equidistribution}.
\newblock {\em Journal of Computational Physics 167}, 2 (2001), 372--392.

\bibitem{bottcher2014levy}
{\sc B{\"{o}}ttcher, B., Schilling, R., and Wang, J.}
\newblock {\em {L{\'{e}}vy Matters III: L{\'{e}}vy-Type Processes:
  Construction, Approximation and Sample Path Properties}}.
\newblock Lecture Notes in Mathematics. Springer International Publishing,
  2014.

\bibitem{Carr2002TheInvestigation}
{\sc Carr, P., Geman, H., Madan, D.~B., and Yor, M.}
\newblock {The Fine Structure of Asset Returns: An Empirical Investigation}.
\newblock {\em Journal of Business 75}, 2 (2002), 305--332.

\bibitem{Carr2006AProcesses}
{\sc Carr, P., and Linetsky, V.}
\newblock {A jump to default extended CEV model: An application of Bessel
  processes}.
\newblock {\em Finance and Stochastics 10}, 3 (2006), 303--330.

\bibitem{carr1999option}
{\sc Carr, P., and Madan, D.}
\newblock {Option valuation using the fast Fourier transform}.
\newblock {\em Journal of Computational Finance 2}, 4 (1999), 61--73.

\bibitem{carr2009stock}
{\sc Carr, P., and Wu, L.}
\newblock {Stock Options and Credit Default Swaps: A Joint Framework for
  Valuation and Estimation}.
\newblock {\em Journal of Financial Econometrics\/} (2009), 1--41.

\bibitem{Cheridito2005EquivalentProcesses}
{\sc Cheridito, P., Filipovi{\'{c}}, D., and Yor, M.}
\newblock {Equivalent and absolutely continuous measure changes for
  jump-diffusion processes}.
\newblock {\em Annals of Applied Probability 15}, 3 (2005), 1713--1732.

\bibitem{cohen2015stochastic}
{\sc Cohen, S.~N., and Elliott, R.~J.}
\newblock {\em {Stochastic Calculus and Applications}}, 2nd~ed.
\newblock Birkh{\"{a}}user, 2015.

\bibitem{Cuchiero2011AffineMatrices}
{\sc Cuchiero, C., Filipovi{\'{c}}, D., Mayerhofer, E., and Teichmann, J.}
\newblock {Affine processes on positive semidefinite matrices}.
\newblock {\em Annals of Applied Probability 21}, 2 (2011), 397--463.

\bibitem{cuchiero2016affine}
{\sc Cuchiero, C., Keller-Ressel, M., Mayerhofer, E., and Teichmann, J.}
\newblock {Affine processes on symmetric cones}.
\newblock {\em Journal of Theoretical Probability 29}, 2 (2016), 359--422.

\bibitem{Cuchiero2012PolynomialFinance}
{\sc Cuchiero, C., Keller-Ressel, M., and Teichmann, J.}
\newblock {Polynomial processes and their applications to mathematical
  finance}.
\newblock {\em Finance and Stochastics 16}, 4 (2012), 711--740.

\bibitem{cuchiero2013path}
{\sc Cuchiero, C., and Teichmann, J.}
\newblock {Path properties and regularity of affine processes on general state
  spaces}.
\newblock In {\em S{\'{e}}minaire de probabilit{\'{e}}s XLV}, C.~Donati-Martin,
  A.~Lejay, and A.~Rouault, Eds. Springer, Berlin, 2013, pp.~201--244.

\bibitem{Duffee1998TheSpreads}
{\sc Duffee, G.~R.}
\newblock {The Relation Between Treasury Yields and Corporate Bond Yield
  Spreads}.
\newblock {\em The Journal of Finance 53}, 6 (1998), 2225--2241.

\bibitem{Duffie2003AffineFinance}
{\sc Duffie, D., Filipovi{\'{c}}, D., and Schachermayer, W.}
\newblock {Affine processes and applications in finance}.
\newblock {\em Annals of Applied Probability 13}, 3 (2003), 984--1053.

\bibitem{bib:duffie-liu-01}
{\sc Duffie, D., and Liou, J.}
\newblock {Floating-Fixed Credit Spreads}.
\newblock {\em Financial Analysts Journal 57\/} (2001), 76--87.

\bibitem{Dynkin1965MarkovProcesses}
{\sc Dynkin, E.~B.}
\newblock {\em {Markov Processes}}.
\newblock Springer, Berlin, Heidelberg, 1965.

\bibitem{Eberlein2010AnalysisApplications}
{\sc Eberlein, E., Glau, K., and Papapantoleon, A.}
\newblock {Analysis of Fourier transform valuation formulas and applications}.
\newblock {\em Applied Mathematical Finance 17}, 3 (2010), 211--240.

\bibitem{ElEuch2018TheVolatility}
{\sc El~Euch, O., Fukasawa, M., and Rosenbaum, M.}
\newblock {The microstructural foundations of leverage effect and rough
  volatility}.
\newblock {\em Finance and Stochastics 22}, 2 (2018), 241--280.

\bibitem{ElEuch2019TheModels}
{\sc El~Euch, O., and Rosenbaum, M.}
\newblock {The characteristic function of rough Heston models}.
\newblock {\em Mathematical Finance 29}, 1 (2019), 3--38.

\bibitem{ElKaroui1997BackwardFinance}
{\sc El~Karoui, N., Peng, S., and Quenez, M.~C.}
\newblock {Backward stochastic differential equations in finance}.
\newblock {\em Mathematical Finance 7}, 1 (1997), 1--71.

\bibitem{Elliott2006OptionCompensatorsb}
{\sc Elliott, R.~J., and Osakwe, C. J.~U.}
\newblock {Option pricing for pure jump processes with Markov switching
  compensators}.
\newblock {\em Finance and Stochastics 10}, 2 (4 2006), 250--275.

\bibitem{Elliott2009OnFormulae}
{\sc Elliott, R.~J., and Siu, T.~K.}
\newblock {On Markov-modulated exponential-affine bond price formulae}.
\newblock {\em Applied Mathematical Finance 16}, 1 (2009), 1--15.

\bibitem{Engel2000One-ParameterEquations}
{\sc Engel, K.-J., and Nagel, R.}
\newblock {\em {One-Parameter Semigroups for Linear Evolution Equations}}.
\newblock Springer, New York, 2000.

\bibitem{bib:ethier-kurtz-86}
{\sc Ethier, S., and Kurtz, T.~G.}
\newblock {\em {Markov Processes: Characterization and Convergence}}.
\newblock Wiley, New York, 1986.

\bibitem{Filipovic2005Time-inhomogeneousProcesses}
{\sc Filipovi{\'{c}}, D.}
\newblock {Time-inhomogeneous affine processes}.
\newblock {\em Stochastic Processes and their Applications 115}, 4 (2005),
  639--659.

\bibitem{filipovic2009term}
{\sc Filipovi{\'{c}}, D.}
\newblock {\em {Term-Structure Models. A Graduate Course.}}
\newblock Springer, 2009.

\bibitem{Frey2020HowModelsb}
{\sc Frey, R., Kurt, K., and Damian, C.}
\newblock {How Safe are European Safe Bonds? An Analysis from the Perspective
  of Modern Credit Risk Models}.
\newblock {\em Journal of Banking and Finance 119\/} (2020).

\bibitem{Glau2016ARates}
{\sc Glau, K.}
\newblock {A Feynman–Kac-type formula for L{\'{e}}vy processes with
  discontinuous killing rates}.
\newblock {\em Finance and Stochastics 20}, 4 (10 2016), 1021--1059.

\bibitem{Han2018SolvingLearning}
{\sc Han, J., Jentzen, A., and Weinan, E.}
\newblock {Solving high-dimensional partial differential equations using deep
  learning}.
\newblock {\em Proceedings of the National Academy of Sciences of the United
  States of America 115}, 34 (8 2018), 8505--8510.

\bibitem{Hawkes2018HawkesReview}
{\sc Hawkes, A.~G.}
\newblock {Hawkes processes and their applications to finance: a review}.
\newblock {\em Quantitative Finance 18}, 2 (2018), 193--198.

\bibitem{bib:jacod-shiryaev-03}
{\sc Jacod, J., and Shiryaev, A.~S.}
\newblock {\em {Limit Theorems for Stochastic Processes}}, 2nd~ed.
\newblock Springer, Berlin, 2003.

\bibitem{Kallsen2010ExponentiallyProcesses}
{\sc Kallsen, J., and Muhle-Karbe, J.}
\newblock {Exponentially affine martingales, affine measure changes and
  exponential moments of affine processes}.
\newblock {\em Stochastic Processes and their Applications 120}, 2 (2010),
  163--181.

\bibitem{Keller-Ressel2015ExponentialProcesses}
{\sc Keller-Ressel, M., and Mayerhofer, E.}
\newblock {Exponential moments of affine processes}.
\newblock {\em Annals of Applied Probability 25}, 2 (2015), 714--752.

\bibitem{Keller-Ressel2011AffineRegular}
{\sc Keller-Ressel, M., Schachermayer, W., and Teichmann, J.}
\newblock {Affine processes are regular}.
\newblock {\em Probability Theory and Related Fields 151}, 3-4 (2011),
  591--611.

\bibitem{keller2019affine}
{\sc Keller-Ressel, M., Schmidt, T., and Wardenga, R.}
\newblock {Affine processes beyond stochastic continuity}.
\newblock {\em Annals of Applied Probability 29}, 6 (2019), 3387--3437.

\bibitem{Kuhn2018OnProcesses}
{\sc K{\"{u}}hn, F.}
\newblock {On martingale problems and Feller processes}.
\newblock {\em Electronic Journal of Probability 23}, 13 (2018), 1--18.

\bibitem{bib:lsu-68}
{\sc Ladyschenskaja, O.~A., Solonnikov, V.~A., and Uralceva, N.~N.}
\newblock {\em {Linear and Quasilinear Equations of Parabolic Type}}.
\newblock American Mathematical Society, Providence, Rhode Island, 1968.

\bibitem{article}
{\sc Lewis, A.}
\newblock {A Simple Option Formula for General Jump-Diffusion and Other
  Exponential L{\'{e}}vy Processes}.
\newblock {\em SSRN Electronic Journal\/} (2001).

\bibitem{Longstaff1995ADebt}
{\sc Longstaff, F.~A., and Schwartz, E.~S.}
\newblock {A Simple Approach to Valuing Risky Fixed and Floating Rate Debt}.
\newblock {\em The Journal of Finance 50}, 3 (1995), 789--819.

\bibitem{lorenzi2006analytical}
{\sc Lorenzi, L., and Bertoldi, M.}
\newblock {\em {Analytical methods for Markov semigroups}}.
\newblock CRC Press, 2006.

\bibitem{Luciano2006AModel}
{\sc Luciano, E., and Schoutens, W.}
\newblock {A multivariate jump-driven financial asset model}.
\newblock {\em Quantitative Finance 6}, 5 (2006), 385--402.

\bibitem{Luciano2010MultivariateCalibration}
{\sc Luciano, E., and Semeraro, P.}
\newblock {Multivariate time changes for L{\'{e}}vy asset models:
  Characterization and calibration}.
\newblock {\em Journal of Computational and Applied Mathematics 233}, 8 (2010),
  1937--1953.

\bibitem{bib:mcneil-frey-embrechts-15}
{\sc McNeil, A.~J., Frey, R., and Embrechts, P.}
\newblock {\em {Quantitative Risk Management: Concepts, Techniques and Tools}},
  2nd~ed.
\newblock Princeton University Press, Princeton, 2015.

\bibitem{Okitaloshima1996OnProblem}
{\sc Okitaloshima, O., and Van~Casteren, J.~A.}
\newblock {On the uniqueness of the martingale problem}.
\newblock {\em International Journal of Mathematics 7}, 6 (1996), 775--810.

\bibitem{Pazy1983SemigroupsEquations}
{\sc Pazy, A.}
\newblock {\em {Semigroups of Linear Operators and Applications to Partial
  Differential Equations}}, 1st~ed.
\newblock Springer, New York, 1983.

\bibitem{protter2005stochasticb}
{\sc Protter, P.~E.}
\newblock {\em {Stochastic Integration and Differential Equations}}, vol.~2.1.
\newblock Springer Science {\&} Business Media, 2005.

\bibitem{revuz1999continuous}
{\sc Revuz, D., and Yor, M.}
\newblock {\em {Continuous martingales and Brownian motion}}, 3rd~ed.
\newblock Springer Science {\&} Business Media, 1999.

\bibitem{Schilling1998ConservativenessSemigroups}
{\sc Schilling, R.~L.}
\newblock {Conservativeness and Extensions of Feller Semigroups}.
\newblock {\em Positivity 2\/} (1998), 239--256.

\bibitem{Schilling1998FellerPaths}
{\sc Schilling, R.~L.}
\newblock {Feller Processes Generated by Pseudo-Differential Operators: On the
  Hausdorff Dimension of Their Sample Paths}.
\newblock {\em Journal of Theoretical Probability 11}, 2 (1998), 303--330.

\bibitem{Schilling1998GrowthProcesses}
{\sc Schilling, R.~L.}
\newblock {Growth and H{\"{o}}lder conditions for the sample paths of Feller
  processes}.
\newblock {\em Probability Theory and Related Fields 112\/} (1998), 565–611.

\bibitem{Schmidt2020InfiniteProcesses}
{\sc Schmidt, T., Tappe, S., and Yu, W.}
\newblock {Infinite dimensional affine processes}.
\newblock {\em Stochastic Processes and their Applications 130}, 12 (2020),
  7131--7169.

\bibitem{Thieme1990SemiflowsOperators}
{\sc Thieme, H.~R.}
\newblock {Semiflows generated by Lipschitz perturbations of non-densely
  defined operators}.
\newblock {\em Differential and Integral Equations 3}, 6 (1990), 1035--1066.

\bibitem{vanBeek2020RegimeFinance}
{\sc van Beek, M., Mandjes, M., Spreij, P., and Winands, E.}
\newblock {Regime switching affine processes with applications to finance}.
\newblock {\em Finance and Stochastics 24}, 2 (2020), 309--333.

\bibitem{Vrabie2003C0-SemigroupsApplications}
{\sc Vrabie, I.}
\newblock {\em {C0-Semigroups and Applications}}, first~ed.
\newblock Elsevier, 2003.

\bibitem{Werner2018Funktionalanalysis}
{\sc Werner, D.}
\newblock {\em {Funktionalanalysis}}, 8th~ed.
\newblock Springer, Berlin Heidelberg, 2018.

\bibitem{Wisniewski1994TheSpaces}
{\sc Wisniewski, A.}
\newblock {The Structure of Measurable Mappings on Metric Spaces}.
\newblock {\em Proceedings of the American Mathematical Society 122}, 1 (9
  1994), 147.

\end{thebibliography}
%%biblatex:
% \printbibliography
\vspace{1.5cm}

\noindent \textbf{Affiliations:}

\vspace{.5cm}

  \noindent Kevin Kurt\\
  Institute for Statistics and Mathematics\\
  Department of Finance, Accounting and Statistics\\
  WU Vienna University of Economics and Business\\
  Welthandelsplatz~1 / Building D4 / Level 4\\
  1020 Vienna, Austria\\
  E-mail: \texttt{kevin.kurt@wu.ac.at}
\\\\
  R\"udiger Frey\\
  Institute for Statistics and Mathematics\\
  Department of Finance, Accounting and Statistics\\
  WU Vienna University of Economics and Business\\
  Welthandelsplatz~1 / Building D4 / Level 4\\
  1020 Vienna, Austria\\
  E-mail: \texttt{ruediger.frey@wu.ac.at}

\end{document}